\newtheorem{theorem}{Theorem}[section]
\newtheorem{lemma}[theorem]{Lemma}
\newtheorem{proposition}[theorem]{Proposition}
\newtheorem{corollary}[theorem]{Corollary}
\newtheorem{remark}[theorem]{Remark}
\numberwithin{equation}{section}
\begin{document}
\title{On the action of multiplicative cascades on measures}

\author{Julien Barral}
\address{Laboratoire d'Analyse, G\'eom\'etrie, et Applications, CNRS, UMR 7539, Universit\'e Sorbonne Paris Nord, CNRS, UMR 7539,  F-93430, Villetaneuse, France}
\email{barral@math.univ-paris13.fr}

\author{Xiong Jin}
\address{Department of Mathematics, University of Manchester, Oxford Road, Manchester M13 9PL, United Kingdom}
\email{xiong.jin@manchester.ac.uk}

\begin{abstract}
We consider the action of Mandelbrot multiplicative cascades on probability measures supported on a symbolic space. For general probability measures, we obtain almost a sharp criterion of non-degeneracy of the limiting measure; it relies on the lower and upper Hausdorff dimensions of the measure and the entropy of the random weights. We also obtain sharp bounds for the lower Hausdorff and upper packing dimensions of the limiting measure. When the original measure is a Gibbs measure associated with a potential of certain modulus of continuity (weaker than H\"older), all our results are sharp. This improves results previously obtained by Kahane and Peyri\`ere, Ben Nasr, and Fan. We exploit our results to derive dimension estimates and absolute continuity for some random fractal measures.
\end{abstract}

\maketitle

\section{Introduction}

Multiplicative chaos  theory \cite{Kahane1987b} was set up by J.-P. Kahane to unify various models of random measures occurring in the study of random Fourier series, random covering problems, as well as modeling of intermittent phenomena (see~\cite{Billard1965,Kahane1985,Mandelbrot1971,Mandelbrot1974}). As particularly remarkable models, one finds the multiplicative cascades on the boundary of a regular tree and their geometric realisations on unit cubes considered by B.~Mandelbrot \cite{Mandelbrot1974}. This model was considered as a simplification of a model of energy dissipation he introduced in his investigations of the log-normal hypothesis in Kolmogorov's work on Turbulence \cite{Kolmogorov1961,Mandelbrot1971}. This model led Kahane to elaborate Gaussian multiplicative chaos theory~\cite{Kahane1985a}, which turns out to play a central role at the interface of probability theory and theoretical physics \cite{DuSh,KRV,BeWeWo}. More generally, log-infinitely divisible multiplicative chaos have been considered \cite{Fan1997a,BM,BaMu,RSV}.

In the general theory~\cite{Kahane1987b}, one defines operators on Radon measures on a locally compact metric space $(T,d)$. Since in this paper $T$ will be compact, let us assume now that it is so.  To define a multiplicative chaos on $T$, consider a sequence of nonnegative measurable functions $(Q_n)_{n\in\mathbb Z_+}$, called $T$-martingale,  defined on the product of a probability space $(\Omega,\mathcal  A,\mathbb P)$ with $(T,\mathcal B(T))$, and such that for each $t\in T$, the sequence $(Q_n(\cdot,t))_{n\in\mathbb{N}}$ is a martingale with respect to the filtration $(\sigma(Q_k(\cdot,t): 0\le k\le n,\, t\in T))_{n\ge 1}$. For each $t\in T$, define $q(t)=\mathbb{E} (Q_n(\cdot,t))$, where $n$ is any nonnegative integer. The sequence $(Q_n)_{n\in\mathbb Z_+}$ defines an operator $\mathcal Q$ on the set $\mathcal M_{+,q}(T)$ of Radon measures $\nu$ on $(T,\mathcal B(T))$ such that $\int_Tq(t)\, \nu(\mathrm{d}t)<\infty$. For simplicity in this paper we will take $q\equiv1$ and denote $\mathcal M_{+,1}(T)$ by $\mathcal M_{+}(T)$. The total mass of an element $\nu\in \mathcal M_{+}(T)$ is denoted by $\|\nu\|$. An operator $\mathcal Q$ on $\mathcal M_{+}(T)$ is then defined in the following way~\cite{Kahane1987b}:

 If $\nu\in \mathcal M_{+}(T)$, due to the assumption made on $(Q_n)_{n\in\mathbb Z_+}$, the sequence of measures $(\nu_n)_{n\in\mathbb Z_+}$ obtained by taking $\nu_n(\mathrm{d} t)=Q_n(t) \cdot\nu(\mathrm{d} t)$ is a $\mathcal M_{+}(T)$-valued martingale of expectation $ \nu$, which almost surely converges weakly to a random measure denoted by $Q\cdot\nu$ as $n\to\infty$. Then, one sets 
$$
\mathcal Q(\nu)=\mathbb{E}(Q\cdot\nu),
$$
where the equality means that $\int_Tf\, {\rm d}\mathcal{Q}(\nu)=\mathbb{E}\int_T f\, {\rm d} (Q\cdot\nu)$ for any real-valued continuous function defined on $T$.  By Fatou's lemma, one has $\|\mathcal Q(\nu)\|\le\liminf_{n\to\infty} \mathbb{E} (\|\nu_n\|)= \|\nu\|$, with equality if and only if the martingale $(\|\nu_n\|)_{n\ge 1}$ is uniformly integrable.  In this case, Kahane says that $\nu$ is $Q$-regular, or that $\mathcal Q$ acts fully on $\nu$. If ever $\mathcal Q(\nu)=0$, i.e., $ Q\cdot\nu=0$ almost surely, he says that $\nu$ is $Q$-singular, or that $\mathcal Q$ kills $\nu$. Of course, the intermediate situation where $Q\cdot \nu\neq 0$ with positive probability but $\nu$ is not $Q$-regular can occur.

In concrete examples, the sequence $(Q_n)_{n\in\mathbb Z_+}$ has a product structure, and $Q$-regularity (resp. singularity) of $\nu$ is also referred to by saying that the multiplicative chaos $Q$ acts fully on (resp. kills) $\nu$.  Finding sharp effective criteria characterising when a multiplicative chaos fully acts on a given measure  is an essential question in this theory. This has been done for some  infinitely divisible multiplicative chaos (starting with the log-normal case) acting on Lebesgue measure restricted to compact domains of $\mathbb R^d$ \cite{Kahane1985,BM,BJ}, as well as for the Mandelbrot multiplicative cascades on the boundary of a regular tree  when they act on the uniform measure \cite{KP1976}, and more generally on so-called Markov measures~\cite{Fan} (see also \cite{BenNasr}), as well as in the context of martingale convergence in branching random walks \cite{Biggins77,Lyons97}.  In the same context, such a criterion is also exhibited in \cite{WW96} for more general cascades acting on the uniform measure. Results, not sharp but quite precise, also exist for general measures, in connection with estimates for their lower Hausdorff dimension \cite{Fan,BM2,FJ}.

In this paper, we focus on multiplicative cascades on the boundary of a $b$-adic tree $\Sigma $ $(b\ge 2)$. Following the size biasing approach first used in this context by Waymire and Williams \cite{WW96}, and then by Lyons~\cite{Lyons97}, and Biggins and Kyprianou \cite{BK04}, which respectively deal with multiplicative cascades acting on the uniform measure $\lambda$ on $\Sigma$, and martingales in the branching random walk, we first consider an abstract model of multiplicative cascade on $\Sigma$, and obtain  sufficient conditions for $Q$-regularity and $Q$-singularity of a given Borel probability measure  (Theorem~\ref{main1}).

Then, we strengthen the existing results on $Q$-regularity/$Q$-singularity for the action of a Mandelbrot multiplicative cascade on general measures on $\Sigma$ (Theorem~\ref{thm1} (a) and (b)), and in case of $Q$-regularity,  we give a sharp estimate of the lower Hausdorff dimension and upper packing dimension of the limit measure when it is non degenerate (Theorem~\ref{thm1} (c)).  When~$\nu$ is exact-dimensional, we can partially reformulate the criterion for $Q$-regularity in terms of the comparison of the dimension of $\nu$ with a kind of entropy associated with the multiplicative cascade, extending what is known when $\nu=\lambda$. Also, we enlarge the class of measures for which we have a full characterisation of $Q$-regularity, by considering the class of ergodic probability measures when $\Sigma$ is interpreted as a symbolic space endowed with the left shift operation. Specifically, for such an ergodic measure, we first notice that it is either $Q$-regular or $Q$-singular, and then for ergodic measures satisfying some Gibbs property, we completely characterise the $Q$-regularity in term of  the entropy of the measure (Theorem~\ref{thm2} and Corollary~\ref{corthm2}; our result  extends to a slightly more general class of multiplicative cascades also considered by Mandelbrot (Corollary~\ref{cor2} and Theorem~\ref{main2})). As a consequence, we treat the case of some invariant measures by inspection of their ergodic decomposition (Corollary~\ref{cor-2.11}). Our results also yield new applications of multiplicative chaos to  the absolute continuity, with respect to their expectation, of the orthogonal projections to subspaces of $\mathbb R^d$ of some random statistically self-similar measures defined on $\mathbb R^d$ (Theorem~\ref{Application2}), as well as to the  computation of the Hausdorff dimension of some random measures on Bedford-McMullen carpets (Theorem~\ref{Application1}). 

\section{Results}
We begin by considering the action of abstract multiplicative cascades. 

\subsection{Abstract sufficient conditions for $Q$-regularity and $Q$-singularity}
\label{sec2.1}

Let $\Lambda=\{0,\ldots, b-1\}$ be an alphabet of $b\ge 2$ letters. Denote by $\Lambda^*=\cup_{n\ge 0} \Lambda^n$ the set of finite words over $\Lambda$, with the convention that $\Lambda^0=\{\emptyset\}$. If $u\in\Lambda^n$ for $n\ge 0$, we denote by $|u|$ the length of $u$, defined to be equal to $n$, and also called generation of $u$.  

Let $\Sigma=\Lambda^\mathbb{\mathbb Z_+}$ be the one-sided symbolic space over the alphabet $\Lambda$. We endow $\Sigma$ with the standard ultra-metric distance $d((x_n)_{n=1}^\infty,  (y_n)_{n=1}^\infty)=e^{-\min\{n\ge 1: \, x_n\neq y_n\}}$, and denote by $\mathcal{B}$ the associated Borel $\sigma$-field. We denote by  $\sigma$  the left shift operation on $\Sigma$. 

For $u\in\Lambda^*$, denote by $[u]$ the set of elements of $\Sigma$ with common prefix $u$. This set is also called the cylinder rooted at $u$. The set of cylinders generates $\mathcal B$. 

Denote by $\lambda$ the uniform measure on $(\Sigma,\mathcal B)$, i.e. the unique probability measure on $\Sigma$, which assigns mass $b^{-|u|}$ to each cylinder $[u]$, $u\in \Lambda^*$. 

For $x=x_1x_2\cdots\in\Sigma$,  denote by $x|_0$ the empty word and by $x|_{n}$ the word $=x_1x_2\cdots x_{n}$ for $n\ge 1$.

Let $(\Omega,\mathcal{A},\mathbb{P})$ be a probability space. Let $\{X_u: u\in\Lambda^*\}$ be a sequence of nonnegative random variables on $(\Omega,\mathcal{A},\mathbb{P})$ indexed by the set of non-empty finite words. 

Denote  the $\sigma$-field generated by $\{X_u:\, |u|\le n\}$ by $\mathcal F_n$:
\[
\mathcal{F}_n=\sigma\{X_u: |u|\le n\}.
\]

For $v\in\Lambda^*$, $n\ge 1$ and $u=u_1\cdots u_n\in \Lambda^n$ denote 
\[
Y_u^{[v]}=X_{vu_1} X_{vu_1u_2}\cdots X_{vu_1\cdots u_n}.
\]
When $v=\emptyset$ we shall write $Y_u^{[\emptyset]}=Y_u$. We set $X_\emptyset\equiv Y_\emptyset^{[v]}\equiv 1$.  

Throughout the paper, we assume that
\medskip
\begin{itemize}
\item[] For all $n\ge 1$ and $u=u_1\cdots u_{n+1}\in \Lambda^{n+1}$, one has $\mathbb{E}(Y_{u_1\cdots u_{n+1}}|\mathcal F_n)=Y_{u_1\cdots u_n}$.
\end{itemize}
\medskip

Consider the product space
\[
(\widetilde{\Omega},\widetilde{\mathcal{A}})=(\Omega\times \Sigma, \mathcal{A}\otimes\mathcal{B}).
\]
For $n\ge 1$ define a random variable $W_{n}$ on $\widetilde{\Omega}$ by
\[
W_{n}(\omega,x)=X_{x|_n}(\omega).
\]
Then, define the random variable $Q_n$ on $\widetilde{\Omega}$ by
\[
Q_{n}=W_{1}W_{2}\cdots W_{n}.
\]
By assumption, $(Q_n)_{n\in\mathbb Z_+}$ is a $\Sigma$-martingale.

Define the $\sigma$-field $\widetilde{\mathcal{F}}_{n}=\sigma\{W_1,\cdots,W_n\}$, and  $\widetilde{\mathcal{F}}=\sigma(\bigcup_{n\ge 1}\widetilde{\mathcal{F}}_{n})$. Let $\nu$ be a Borel probability measure on $(\Sigma,\mathcal{B})$ (since $\Sigma$ is compact this is not a restriction with respect to working with positive Radon measures). Let $\widetilde{\mathbb{P}}$ be the probability measure on $(\widetilde{\Omega},\widetilde{\mathcal{F}})$ such that for  $\widetilde{\mathcal{F}}$-measurable function $f$,
\begin{equation}\label{eq0}
\int_{\widetilde{\Omega}} f(\omega,x) \, \widetilde{\mathbb{P}}({\rm d}\omega,{\rm d}x)=\mathbb{E}_{\mathbb P}\int_{\Sigma} f(\cdot,x) \, \nu({\rm d}x).
\end{equation}
Then let $\widetilde{\mathbb{Q}}$  be the probability measure on $(\widetilde{\Omega},\widetilde{\mathcal{F}})$ such that for $n\ge 1$ and $\widetilde{\mathcal{F}}_{n}$-measurable function $f$,
\begin{equation}\label{eq1}
\int_{\widetilde{\Omega}} f(\omega,x) \, \widetilde{\mathbb{Q}}({\rm d}\omega,{\rm d}x)=\int_{\widetilde{\Omega}} f(\omega,x) Q_{n}(\omega,x) \, \widetilde{\mathbb{P}}({\rm d}\omega,{\rm d}x).
\end{equation}
The existence and uniqueness of $\widetilde{\mathbb{Q}}$ is granted by Kolmogorov consistency theorem.

For $n\ge 1$ denote the total mass of the random measure $\nu_n=Q_n\cdot\nu$ by $Z_n$. We have
\[
Z_n=\sum_{u\in \Lambda^n} Y_u \cdot \nu([u]).
\]

Also, set  $\mathcal{F}=\sigma(\cup_{n\ge 1}\mathcal{F}_n)$. By construction $\{Z_n,\mathcal{F}_n\}_{n\ge 1}$ is a non-negative martingale on $(\Omega,\mathcal{F},\mathbb{P})$ with expectation $1$, therefore it converges $\mathbb{P}$-almost surely. For all $\omega\in\Omega$ and $x\in\Sigma$, define 
\[
\begin{cases}
Z(\omega)=\limsup_{n\to\infty} Z_n(\omega),\\
L(\omega,x)=\sup_{n\ge 0}Y_{x|_{n}}(\omega)\nu([x|_{n}]),\\
S(\omega,x)=\sum_{k=0}^\infty Y_{x|_{k}}(\omega)\nu([x|_{k}]).
\end{cases}
\]
For $x\in \Sigma$ define $\mathcal{H}_x=\sigma\{X_{x|_n}:n\ge 1\}$. We denote by {\bf (M)} the following property: 
\begin{itemize}
\item[{\bf (M)}] For all $x\in\Sigma$, $k\ge 0$ and $u\in\Lambda^*\setminus\{\emptyset\}$ with $u_1\neq x_{k+1}$, $\mathbb{E}(Y_u^{[x|_k]}\, |\, \mathcal{H}_x)=1$, $\mathbb{P}$-almost surely.
\end{itemize}

\begin{theorem}
\label{main1}
The following sufficient conditions for $Q$-singularity and $Q$-regularity  hold:
\begin{itemize}
\item[(i)] If $\widetilde{\mathbb{Q}}(L=\infty)>0$, then $\mathbb{E}_{\mathbb{P}}(Z)<1$. If $\widetilde{\mathbb{Q}}(L=\infty)=1$, then $\mathbb{E}_{\mathbb{P}}(Z)=0$,  i.e.  $\nu$ is $Q$-singular.

\item[(ii)] Suppose that {\bf (M)} holds. If $\widetilde{\mathbb{Q}}(S<\infty)>0$, then $\mathbb{E}_{\mathbb{P}}(Z)>0$, i.e. $Q\cdot\nu$ is non-degenerate. If $\widetilde{\mathbb{Q}}(S<\infty)=1$, then $\mathbb{E}_{\mathbb{P}}(Z)=1$, i.e. $Q$-regular.
\end{itemize}
\end{theorem}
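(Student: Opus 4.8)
The plan is to reduce both statements to a single bridge identity,
\[
\mathbb{E}_{\mathbb{P}}(Z)=\widetilde{\mathbb{Q}}(Z<\infty)=1-\widetilde{\mathbb{Q}}(Z=\infty),
\]
and then to compare the event $\{Z=\infty\}$ with $\{L=\infty\}$ for~(i) and with $\{S=\infty\}$ for~(ii). To obtain the bridge identity, first I would test \eqref{eq1} against functions depending on $\omega$ only and use $Q_n(\omega,x)=Y_{x|_n}(\omega)$ together with $\int_\Sigma Y_{x|_n}(\omega)\,\nu(\mathrm{d}x)=Z_n(\omega)$ to see that the $\omega$-marginal of $\widetilde{\mathbb{Q}}$, restricted to $\mathcal F_n$, is the tilted measure $Z_n\,\mathrm{d}\mathbb{P}$. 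Since $(Z_n)$ is a nonnegative $\mathbb{P}$-martingale, the Lebesgue decomposition of this tilted measure relative to $\mathbb{P}$ along $(\mathcal F_n)$ has absolutely continuous part of density $Z=\limsup_n Z_n$ and total mass $\mathbb{E}_{\mathbb{P}}(Z)$, while its singular part is carried by $\{Z=\infty\}$; this is exactly the displayed identity. I would also record here that, under $\widetilde{\mathbb{Q}}$, $1/Z_n$ is a nonnegative supermartingale, so $Z_n$ converges in $(0,\infty]$ and $\liminf_n Z_n=\limsup_n Z_n=Z$ holds $\widetilde{\mathbb{Q}}$-almost surely; this fact lets me apply conditional Fatou to $\liminf$ while concluding about $Z$.

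For~(i), the observation is purely pointwise: since $Z_n=\sum_{u\in\Lambda^n}Y_u\,\nu([u])$ is a sum of nonnegative terms, $Z_n\ge Y_{x|_n}\nu([x|_n])$ for every $(\omega,x)$. Hence on $\{L=\infty\}$ some subsequence of $Y_{x|_n}\nu([x|_n])$ tends to $\infty$, forcing $\limsup_n Z_n=Z=\infty$, so $\{L=\infty\}\subseteq\{Z=\infty\}$. Combined with the bridge identity, $\widetilde{\mathbb{Q}}(L=\infty)>0$ gives $\mathbb{E}_{\mathbb{P}}(Z)\le 1-\widetilde{\mathbb{Q}}(L=\infty)<1$, and $\widetilde{\mathbb{Q}}(L=\infty)=1$ gives $\mathbb{E}_{\mathbb{P}}(Z)=0$, i.e. $Q$-singularity. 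No use of~{\bf (M)} is needed here.

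For~(ii), set $\mathcal G=\sigma(\mathcal H_x,x)$, the spine $\sigma$-field. I would decompose each term of $Z_n$ according to its branch point off the spine: for $u\in\Lambda^n$ with $u\neq x|_n$, let $k\in\{0,\dots,n-1\}$ be maximal with $u|_k=x|_k$, and write $u=(x|_k)v$ with $v\in\Lambda^{n-k}$, $v_1\neq x_{k+1}$, so that $Y_u=Y_{x|_k}\,Y_v^{[x|_k]}$. This yields
\[
Z_n=Y_{x|_n}\nu([x|_n])+\sum_{k=0}^{n-1}Y_{x|_k}\sum_{\substack{v\in\Lambda^{n-k}\\ v_1\neq x_{k+1}}}Y_v^{[x|_k]}\,\nu([(x|_k)v]).
\]
The key point, and the place where~{\bf (M)} enters, is that because the density $Q_n=Y_{x|_n}$ is $\mathcal G$-measurable, under $\widetilde{\mathbb{Q}}$ the off-spine variables retain their $\mathbb{P}(\,\cdot\mid\mathcal H_x)$-conditional law; since each $Y_v^{[x|_k]}$ above is a product of off-spine variables with $v_1\neq x_{k+1}$, property~{\bf (M)} gives $\mathbb{E}_{\widetilde{\mathbb{Q}}}(Y_v^{[x|_k]}\mid\mathcal G)=1$. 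Taking $\mathbb{E}_{\widetilde{\mathbb{Q}}}(\,\cdot\mid\mathcal G)$ of the decomposition and using $\sum_{v_1\neq x_{k+1}}\nu([(x|_k)v])=\nu([x|_k])-\nu([x|_{k+1}])$ then gives
\[
\mathbb{E}_{\widetilde{\mathbb{Q}}}(Z_n\mid\mathcal G)=Y_{x|_n}\nu([x|_n])+\sum_{k=0}^{n-1}Y_{x|_k}\bigl(\nu([x|_k])-\nu([x|_{k+1}])\bigr)\le\sum_{k=0}^{n}Y_{x|_k}\nu([x|_k])\le S.
\]

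Finally, conditional Fatou together with $\liminf_n Z_n=Z$ ($\widetilde{\mathbb{Q}}$-a.s.) yields $\mathbb{E}_{\widetilde{\mathbb{Q}}}(Z\mid\mathcal G)\le S$, so $Z<\infty$ $\widetilde{\mathbb{Q}}$-almost surely on the $\mathcal G$-measurable event $\{S<\infty\}$; that is, $\{S<\infty\}\subseteq\{Z<\infty\}$ up to a $\widetilde{\mathbb{Q}}$-null set. With the bridge identity this gives $\mathbb{E}_{\mathbb{P}}(Z)=\widetilde{\mathbb{Q}}(Z<\infty)\ge\widetilde{\mathbb{Q}}(S<\infty)$, so that $\widetilde{\mathbb{Q}}(S<\infty)>0$ forces $\mathbb{E}_{\mathbb{P}}(Z)>0$ (non-degeneracy) and $\widetilde{\mathbb{Q}}(S<\infty)=1$ forces $\mathbb{E}_{\mathbb{P}}(Z)=1$ ($Q$-regularity). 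I expect the main obstacle to be the rigorous justification of the conditional-law statement under $\widetilde{\mathbb{Q}}$ used in~(ii), namely that tilting by the $\mathcal G$-measurable density $Q_n$ leaves the conditional law of the off-spine variables given the spine unchanged, so that~{\bf (M)} may be applied term by term, together with the attendant measurability bookkeeping (ensuring $Z_n$, $\nu([x|_n])$ and $S$ are $\widetilde{\mathcal F}$-measurable) needed to make the bridge identity precise.
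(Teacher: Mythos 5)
Your proposal is correct and follows essentially the same route as the paper's own proof: the same Lebesgue-decomposition identity $\mathbb{E}_{\mathbb{P}}(Z)=\mathbb{Q}(Z<\infty)$ for the tilted martingale measure (the paper invokes \cite[Theorem 4.3.5]{Dur}), the same pointwise bound $Z_n\ge Y_{x|_n}(\omega)\nu([x|_n])$ for part (i), and for part (ii) the same spine decomposition of $Z_n$ followed by conditioning on the spine $\sigma$-field, applying {\bf (M)} to the off-spine factors, and concluding via conditional Fatou that $\mathbb{E}(Z\mid\mathcal G)\le S$. The only differences are cosmetic: you work directly under $\widetilde{\mathbb{Q}}$ instead of its projection $\mathbb{Q}$ to $\Omega$ (equivalent here, since $Z$ depends on $\omega$ alone), and you make explicit the justification -- tilting by a spine-measurable density leaves the conditional law of the off-spine variables unchanged -- that the paper's computation leaves implicit.
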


In order to relate this preliminary statement to the existing literature, it is worth providing its proof immediately.

\begin{proof}
For $n\ge 1$ denote by $\widetilde{\mathbb{P}}_n$ and $\widetilde{\mathbb{Q}}_n$ the restrictions of $\widetilde{\mathbb{P}}$ and  $\widetilde{\mathbb{Q}}$ on $\widetilde{\mathcal{F}}_n$ respectively. From \eqref{eq1} we get
\begin{equation}\label{eq2}
\widetilde{\mathbb{Q}}_n({\rm d}\omega,{\rm d}x)=Q_{n}(\omega,x) \, \widetilde{\mathbb{P}}_n({\rm d}\omega,{\rm d}x).
\end{equation}
Let $\pi$ be the projection from $\widetilde{\Omega}$ to $\Omega$. Define $\mathbb{Q}=\widetilde{\mathbb{Q}}\circ\pi^{-1}$. For $n\ge 1$ denote by $\mathbb{P}_n$ and $\mathbb{Q}_n$ the restrictions of $\mathbb{P}$ and $\mathbb{Q}$ on $\mathcal{F}_n$ respectively. By \eqref{eq2} we get
\begin{equation}
\mathbb{Q}_n({\rm d}\omega)=Z_n(\omega) \mathbb{P}_n({\rm d}\omega).
\end{equation}
which implies that $\mathbb{Q}_n\ll \mathbb{P}_n$ with Radon-Nikodym derivative $Z_n$. Hence $\{Z_n,\mathcal{F}_n\}_{n\ge 1}$ is a $\mathbb{P}$-martingale and $\{1/Z_n,\mathcal{F}_n\}_{n\ge 1}$ is a $\mathbb{Q}$-martingale. By \cite[Theorem 4.3.5]{Dur} we have for $A\in\mathcal{F}$,
\begin{equation}\label{Du}
\mathbb{Q}(A)=\int_A Z(\omega) \,\mathbb{P}({\rm d}\omega)+\mathbb{Q}(A\cap \{Z=\infty\}).
\end{equation}
In particular, when $A=\Omega$,
\[
\mathbb{E}_{\mathbb{P}}(Z)=1-\mathbb{Q}(Z=\infty)=\mathbb{Q}(Z<\infty).
\]

Now, we first note that it follows from the definition of $(Z_n)_{n\ge 1}$  that we have for all $n\ge 1$, $\omega\in\Omega$ and $x\in\Sigma$,
\begin{equation}\label{yng}
Z_n(\omega)\ge Y_{x|_{n}}(\omega)\nu([x|_{n}]).
\end{equation}
Due to \eqref{yng}, if $L(\omega,x)=\infty$, then we have $Z(\omega)=\infty$. Hence if $\widetilde{\mathbb{Q}}(L=\infty)>0$ then $\mathbb{Q}(Z=\infty)>0$ and therefore $\mathbb{E}_{\mathbb{P}}(Z)<1$. Similarly if $\widetilde{\mathbb{Q}}(L=\infty)=1$ then $\mathbb{Q}(Z=\infty)=1$ and $\mathbb{E}_{\mathbb{P}}(Z)=0$.

Next, we note that for each $x\in \Sigma$ we may rewrite $Z_n$ as
\begin{equation}\label{yn}
Z_n(\omega)=Y_{x|_{n}}(\omega)\nu([x|_{n}])+\sum_{k=0}^{n-1} Y_{x|_{k}}(\omega)\sum_{u\in \Lambda^{n-k}: u_1\neq x_{k+1}} Y_{u}^{[x|_k]}(\omega)\nu([x|_{k}\cdot u])
\end{equation}
A similar decomposition is used in \cite{WW96} when $\nu=\lambda$. However it is exploited in a different way from what we are going to do, which borrows some idea from~\cite{BK04}. 

Since $\{1/Z_n,\mathcal{F}_n\}_{n\ge 1}$ is a positive $\mathbb{Q}$-martingale, $Z_n$ converges to $Z$, $\mathbb{Q}$-almost surely.  Then, using Fatou's lemma, as well as \eqref{yn}  and {\bf (M)} we can get 
\begin{align*}
\mathbb{E}_{\mathbb Q}(Z \, |\,  \mathcal{H}_x)(\omega)
\le&\ \liminf_{n\to\infty}\mathbb{E}_{\mathbb{Q}}(Z_n \, |\,  \mathcal{H}_x)(\omega)\\
=&\ \liminf_{n\to\infty} \Big \{Y_{x|_{n}}(\omega)\nu([x|_{n}])+ \sum_{k=0}^{n-1} Y_{x|_{k}}(\omega) (\nu([x|_k])-\nu([x|_{k+1}]))\Big \}\\
\le & \ S(\omega,x)= \sum_{k=0}^\infty Y_{x|_{k}}(\omega)\nu([x|_{k}]).
\end{align*}
Hence if $\widetilde{\mathbb{Q}}(S<\infty)>0$ then $\mathbb{Q}(Z<\infty)>0$ and therefore $\mathbb{E}_P(Z)>0$. Similarly if $\widetilde{\mathbb{Q}}(S<\infty )=1$ then $\mathbb{Q}(Z<\infty)=1$ and therefore $\mathbb{E}_{\mathbb{P}}(Z)=1$. 
\end{proof}

\begin{remark}
\label{rem0}
(i) The probability measure $\widetilde{\mathbb{Q}}$ was first introduced in \cite{KP1976} by Peyri\`ere in the case of Mandelbrot multiplicative cascades (i.e. the $X_u$, $u\in\Lambda^*\setminus\{\emptyset\}$, are i.i.d) acting on~$\lambda$, and when the action of the chaos is known to be full; then $\widetilde {\mathbb Q}({\rm d}\omega,dx)$ can be defined as $\mathbb{P}({\rm d}\omega) (Q\cdot \lambda(\omega))({\rm d}x)$, and Peyri\`ere used it to compute the Hausdorff dimension  of $Q\cdot \lambda$. The version used in this paper follows that of  Waymire and Williams in \cite{WW96}, who use Kolmogorov extension theorem to get $\widetilde{\mathbb{Q}}$ without requiring  $\mathbb{E}_\mathbb{P}(Z)=1$, but still work with $\lambda$.

\medskip

\noindent (ii) The proof of Theorem~\ref{main1} uses a mixture of  the approach provided in \cite{WW96,Lyons97,BK04}. 

In \cite{WW96}, the authors consider the action on $\lambda$ of multiplicative cascades which obey condition~{\bf (M)}, and possess the additional property that the $X_u$ indexed by words of the same generation are independent. For these multiplicative cascades, Theorem~\ref{main1} provides an extension, valid for any $\nu$, of \cite[Corollary 2.3]{WW96} (note that, using  \cite{WW96} terminology, here we consider cascades with no additional weight system). Also, the proof  is more direct.

In \cite{Lyons97} and \cite{BK04},  the so-called additive martingales, as well as more general martingales associated with  branching random walks are considered; the idea of conditioning with respect to $\mathcal H_x$ comes from~\cite{BK04}. In our context, the additive martingales correspond to $(Z_n)_{n\ge 1}$ in the case where $\nu$ is a Bernoulli product measure, and the $X_u$, $u\in\Lambda^*\setminus\{\emptyset\}$, have the additional property that the random vectors $(X_{ui})_{0\le i\le b-1}$, $u\in\Lambda^*$, are independent and with positive entries; in this case, Theorem~\ref{main1} is essentially a restatement of \cite[Theorem 12.2]{BK04}.

\medskip

\noindent (iii) If $L(\omega,x)=\infty$ then $S(\omega,x)=\infty$, so it is natural to wonder under which conditions on $Q$ and $\nu$ one can replace $L(\omega,x)=\infty$ by $S(\omega,x)=\infty$ in the statement of Theorem~\ref{main1} and get the same conclusion.  In the next sections we will exhibit situations in which this is possible, and for which the alternative $S(\omega,x)=\infty$ $\widetilde {\mathbb{Q}}$-a.e./$S(\omega,x)<\infty$  $\widetilde {\mathbb{Q}}$-a.e. can be expressed in a simpler way and does hold. In particular, this will show that the property $\widetilde {\mathbb{Q}}(\{L=\infty\})>0$ is not necessary for $\nu$ not to be $Q$-regular (see the comment following Theorem~\ref{cor1} below). On the other hand, in Section~\ref{sec2.4} we give an example where $\nu$ is $Q$-regular but $\widetilde {\mathbb{Q}}(\{S=\infty\})>0$ (see Remark~\ref{special case}). 
\end{remark}

In the next section we consider the specific class of Mandelbrot multiplicative cascades and simplify the criterion of non-degeneracy provided by Theorem~\ref{main1}. We also give a sharp result regarding the lower Hausdorff and upper packing dimensions of the limit measure. 

\subsection{Results for Mandelbrot multiplicative cascades}\label{secMMC}

We need a few additional definitions. 

The lower and upper local dimensions of a positive Radon measure $\nu$ on $\Sigma$ at a point $x$ of the topological support of $\nu$ are respectively defined by 
$$
\underline\dim_{{\rm loc}}(\nu,x)=\liminf_{n\to\infty}\frac{-\log \nu([x|_n])}{n} \text{ and }\overline\dim_{{\rm loc}}(\nu,x)=\limsup_{n\to\infty}\frac{-\log \nu([x|_n])}{n},
$$
and in case of equality, the common value is called local dimension of $\nu$ and denoted by $\dim_{{\rm loc}}(\nu,x)$. 

The lower and upper Hausdorff dimensions, as well as the upper packing dimension of $\nu$ are respectively defined as follows (see, e.g., \cite{Fan1989c,Fan1994,Heurteaux1998})
$$
\underline{\dim}_H (\nu)=\inf\{\dim_H E:\,  E\text{ Borel set},\, \nu(E)>0\}={\mathrm{ess}\inf}_\nu\, \underline\dim_{{\rm loc}}(\nu,\cdot),
$$
$$
\overline{\dim}_H (\nu)=\inf\{\dim_H E:\,  E\text{ Borel set},\, \nu(\Sigma\setminus E)>0\}={\mathrm{ess}\sup}_\nu\,\underline\dim_{{\rm loc}}(\nu,\cdot),
$$
and 
$$
\overline{\dim}_P (\nu)=\inf\{\dim_P E:\,  E\text{ Borel set},\, \nu(\Sigma\setminus E)>0\}= {\mathrm{ess}\sup}_\nu\,\overline\dim_{{\rm loc}}(\nu,\cdot),
$$
where $\dim_H$ and $\dim_P$ stand for the Hausdorff and packing dimensions respectively. If there exists $D>0$ such that $\underline{\dim}_H (\nu)=\overline{\dim}_P (\nu)=D$, or equivalently $\dim_{{\rm loc}}(\nu,x)=D$ $\nu$-almost everywhere, one says that $\nu$ is exact-dimensional with dimension $D$, and we write $\dim(\nu)=D$. 

In the case when property {\bf (M)} is strengthened into
\begin{itemize}
\item[{\bf (M1)}] $\{X_u: u\in\Lambda^*\setminus \{\emptyset\}\}$ are i.i.d. random variables with a common law $X$, where $X\ge 0$ is a positive random variable with $\mathbb{E}(X)=1$,
\end{itemize}
the sequence $(Q_n)_{n\in\mathbb Z_+}$ is called a Mandelbrot multiplicative cascade. 

This model of multiplicative chaos has been first considered in a model for Turbulence in \cite{Mandelbrot1974} when it acts on  the uniform measure $\lambda$ on $\Sigma$. Note that $\lambda$ is exact-dimensional with $\dim (\lambda)=\log (b)$. A necessary and sufficient condition for this measure to be $Q$-regular was obtained in \cite{KP1976}. Specifically, setting
$$
h_X=\mathbb E(X\log X)\in[0,\infty],
$$
(the above expectation is well-defined since $x\log x \ge -e^{-1}$ on $\mathbb{R}_+$) then $\lambda$ is ${Q}$-regular or $Q$-singular according to whether $h_X< \log b=\dim (\lambda)$ or not. Also, it is proved in \cite{KP1976, Kahane1987} that, conditional on $Q\cdot\lambda\neq 0$, $Q\cdot\lambda$ is exact-dimensional with dimension $\log (b)-h_X=\dim (\lambda)-h_X$.

Such optimal results were extended to Markov measures in \cite[Theorem A]{Fan},  i.e. $\sigma$-ergodic Gibbs measures associated with a  potential constant over the cylinders of generation $\ell$  for some $\ell \in\mathbb Z_+$. Results, less sharp, valid for general measures are obtained in \cite[Theorem B]{Fan}.

\subsubsection{\bf Action of multiplicative cascades on general measures under {\bf (M1)}}
\label{sec2.2}
\begin{theorem}\label{thm1} 
Let $\nu$ be a Borel probability measure on $\Sigma$. 
\begin{itemize}
\item[(a)] If for $\nu$-almost every $x\in \Sigma$,
\begin{equation}\label{sup}
\underline\dim_{{\rm loc}}(\nu,x)>h_X,
\end{equation}
then  $\nu$ is $Q$-regular. This holds in particular if $\underline \dim_H (\nu)>h_X$. 
\item[(b)] If for $\nu$-almost every $x\in \Sigma$,
\begin{equation}\label{inf}
\underline\dim_{{\rm loc}}(\nu,x)<h_X,
\end{equation}
then $\nu$ is $Q$-singular. This holds in particular if $\overline \dim_H (\mu)<h_X$. 
\item[(c)] Under the assumption of (a), with probability 1, conditional on $Q\cdot\nu\neq0$, 
$$
 \underline \dim_H (\nu)-h_X\le \underline \dim_H (Q\cdot\nu)\le \overline \dim_P (Q\cdot\nu)\le \overline \dim_P (\nu)-h_X.
$$ 
\end{itemize}
\end{theorem}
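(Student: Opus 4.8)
The plan is to analyse the quantities $L$ and $S$ of Theorem~\ref{main1}, and the cylinder masses of $Q\cdot\nu$, under the Peyri\`ere measure $\widetilde{\mathbb Q}$, exploiting the spine structure that $\widetilde{\mathbb Q}$ acquires under {\bf (M1)}. First I would record that structure. Since $\mathbb E_\mathbb P(Y_u)=1$, \eqref{eq1} gives $\widetilde{\mathbb Q}(x|_n=u)=\nu([u])$, so the $x$-marginal of $\widetilde{\mathbb Q}$ is exactly $\nu$. Moreover, tilting by $Q_n=Y_{x|_n}=\prod_{k=1}^nX_{x|_k}$ affects only the weights along the spine $x$: conditionally on $x$, the $(X_{x|_k})_{k\ge1}$ become i.i.d.\ with the size-biased law $f\mapsto\mathbb E(Xf(X))$, while the off-spine weights remain i.i.d.\ of law $X$ and independent of the spine. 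Hence $\log Y_{x|_n}=\sum_{k=1}^n\log X_{x|_k}$ is, under $\widetilde{\mathbb Q}$, a sum of i.i.d.\ terms of mean $\mathbb E(X\log X)=h_X$; as $x\log^-x\le e^{-1}$ the negative part is integrable, so the strong law of large numbers yields $\tfrac1n\log Y_{x|_n}\to h_X$ $\widetilde{\mathbb Q}$-a.s.\ (the limit being $+\infty$ if $h_X=\infty$).

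For (a) and (b) I combine this with $x\sim\nu$. Writing $a_n=Y_{x|_n}\nu([x|_n])$, one has $\tfrac1n\log a_n=\tfrac1n\log Y_{x|_n}+\tfrac1n\log\nu([x|_n])$, so $\limsup_n\tfrac1n\log a_n=h_X-\underline\dim_{\rm loc}(\nu,x)$ $\widetilde{\mathbb Q}$-a.s. Under \eqref{sup} this limsup is negative, the $a_n$ decay geometrically, and $S=\sum_ka_k<\infty$ $\widetilde{\mathbb Q}$-a.s.; since {\bf (M1)} implies {\bf (M)}, Theorem~\ref{main1}(ii) gives $Q$-regularity. Under \eqref{inf} the limsup is positive, $a_n$ is unbounded, $L=\sup_na_n=\infty$ $\widetilde{\mathbb Q}$-a.s., and Theorem~\ref{main1}(i) gives $Q$-singularity. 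The ``in particular'' clauses follow from $\underline\dim_H(\nu)=\mathrm{ess\,inf}_\nu\,\underline\dim_{\rm loc}(\nu,\cdot)$ and $\overline\dim_H(\nu)=\mathrm{ess\,sup}_\nu\,\underline\dim_{\rm loc}(\nu,\cdot)$.

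For (c) I work on $\{Z>0\}$, where, by $Q$-regularity (as in Remark~\ref{rem0}(i)), $\widetilde{\mathbb Q}$ disintegrates as $\widetilde{\mathbb Q}(\mathrm d\omega,\mathrm dx)=\mathbb P(\mathrm d\omega)\,(Q\cdot\nu)(\omega)(\mathrm dx)$. Self-similarity of the limit gives $(Q\cdot\nu)([x|_n])=Y_{x|_n}\,\nu([x|_n])\,Z^{(x|_n)}$, where $Z^{(u)}$ is the total cascade mass on the subtree rooted at $u$ acting on $\nu^{(u)}=\nu([u]\,\cdot)/\nu([u])$; here $Z^{(u)}$ is independent of $Y_u$ with $\mathbb E_\mathbb P(Z^{(u)})=1$, since each $\nu^{(u)}$ still satisfies \eqref{sup} and is thus $Q$-regular. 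The central claim is that
$$
\frac1n\log Z^{(x|_n)}\longrightarrow 0\qquad\widetilde{\mathbb Q}\text{-a.s.}
$$
Granting it and using $\tfrac1n\log Y_{x|_n}\to h_X$, one obtains $\underline\dim_{\rm loc}(Q\cdot\nu,x)=\underline\dim_{\rm loc}(\nu,x)-h_X$ and $\overline\dim_{\rm loc}(Q\cdot\nu,x)=\overline\dim_{\rm loc}(\nu,x)-h_X$ for $\widetilde{\mathbb Q}$-a.e.\ $(\omega,x)$. Because the $x$-marginal of $\widetilde{\mathbb Q}$ is $\nu$, the $\nu$-a.e.\ inequalities $\underline\dim_{\rm loc}(\nu,\cdot)\ge\underline\dim_H(\nu)$ and $\overline\dim_{\rm loc}(\nu,\cdot)\le\overline\dim_P(\nu)$ hold $\widetilde{\mathbb Q}$-a.s., hence $(Q\cdot\nu)$-a.e.\ $\mathbb P$-a.s.; taking $\mathrm{ess\,inf}_{Q\cdot\nu}$ and $\mathrm{ess\,sup}_{Q\cdot\nu}$ yields the two outer inequalities, the middle one $\underline\dim_H(Q\cdot\nu)\le\overline\dim_P(Q\cdot\nu)$ being a general fact about measures.

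It remains to prove the claim, and here I expect the main difficulty. The lower-tail half $\liminf_n\tfrac1n\log Z^{(x|_n)}\ge0$ is robust: for $s\in(0,1)$ the disintegration and the independence of $Y_u$ and $Z^{(u)}$ give $\mathbb E_{\widetilde{\mathbb Q}}\big((Z^{(x|_n)})^{s-1}\big)=\sum_{u\in\Lambda^n}\nu([u])\,\mathbb E_\mathbb P\big((Z^{(u)})^{s}\big)\le1$ by Jensen, so $\widetilde{\mathbb Q}(Z^{(x|_n)}<e^{-\varepsilon n})\le e^{-(1-s)\varepsilon n}$ and Borel--Cantelli applies; this delivers $\overline\dim_P(Q\cdot\nu)\le\overline\dim_P(\nu)-h_X$. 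The delicate half is the upper-tail bound $\limsup_n\tfrac1n\log Z^{(x|_n)}\le0$ (needed for $\underline\dim_H(Q\cdot\nu)\ge\underline\dim_H(\nu)-h_X$): the analogous moment $\mathbb E_{\widetilde{\mathbb Q}}\big((Z^{(x|_n)})^{s}\big)=\sum_u\nu([u])\,\mathbb E_\mathbb P\big((Z^{(u)})^{1+s}\big)$ needs a moment of order $1+s>1$ of the subtree masses, which may be infinite, while the soft estimate $Z^{(x|_n)}\le Z/(Y_{x|_n}\nu([x|_n]))$ only reproduces the trivial bound. I would instead control $\mathbb E_{\widetilde{\mathbb Q}}\big((\log^+Z^{(x|_n)})^{1+\delta}\big)=\sum_u\nu([u])\,\mathbb E_\mathbb P\big(Z^{(u)}(\log^+Z^{(u)})^{1+\delta}\big)$ and apply Borel--Cantelli to $\{Z^{(x|_n)}>e^{\varepsilon n}\}$. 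The whole argument thus hinges on an $L\log L$-type estimate for the subtree masses, finite in $\nu$-average, which the strict subcriticality in \eqref{sup} must be shown to guarantee; securing this moment bound is the crux of the proof.
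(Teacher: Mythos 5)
Your parts (a) and (b) are correct and follow the paper's own route exactly: the spine description of $\widetilde{\mathbb Q}$ under \textbf{(M1)}, the strong law giving $\tfrac1n\log Y_{x|_n}\to h_X$ $\widetilde{\mathbb Q}$-a.s., the identity $\limsup_n\tfrac1n\log\big(Y_{x|_n}\nu([x|_n])\big)=h_X-\underline\dim_{\rm loc}(\nu,x)$, and then Theorem~\ref{main1} via $S<\infty$ resp.\ $L=\infty$ (this is precisely the paper's proof of Theorem~\ref{cor1}). Your third inequality in (c) is also sound: the size-biased identity $\mathbb E_{\widetilde{\mathbb Q}}\big[(Z^{(x|_n)})^{s-1}\big]=\sum_u\nu([u])\,\mathbb E_{\mathbb P}\big[(Z^{(u)})^{s}\big]\le 1$ for $s\in(0,1)$ plus Borel--Cantelli is a legitimate variant of the paper's argument, which carries out the same sub-unit-moment estimate directly under $\mathbb P$ on the sets $E_{\epsilon,N}$ via $\mathbb E\big[(Q\cdot\nu([u]))^q\big]\le\mathbb E(X^q)^p\,\nu([u])^q$, $q\in(0,1)$, so negative-order moments always exist and no extra hypothesis is needed.

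The genuine gap is exactly where you located it: the first inequality of (c). The estimate your Borel--Cantelli scheme requires, namely that $\sum_u\nu([u])\,\mathbb E_{\mathbb P}\big[Z^{(u)}(\log^+Z^{(u)})^{1+\delta}\big]$ be finite (uniformly enough in $n$ to be summable against $\{Z^{(x|_n)}>e^{\varepsilon n}\}$), is not a routine lemma and is in general unavailable under the bare hypothesis \eqref{sup}. Two obstructions: first, even in the homogeneous case $\nu=\lambda$, moments $\mathbb E\big[Z(\log^+Z)^{a}\big]$ of cascade masses are finite only under correspondingly stronger moments of $X$, roughly $\mathbb E\big[X(\log^+X)^{a+1}\big]<\infty$, whereas Theorem~\ref{thm1} assumes nothing beyond $\mathbb E X=1$ and the subcriticality \eqref{sup}; second, the laws of the $Z^{(u)}$ depend on the conditional measures $\nu^{(u)}$, and \eqref{sup} is a purely almost-everywhere, non-uniform condition, so there is no uniformity over $u$ with which to control the $\nu$-average. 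The paper sidesteps moments entirely: for $\alpha<\underline\dim_H(\nu)-h_X$ the composed cascade $Q_\alpha Q$ has entropy $h_X+\alpha<\underline\dim_H(\nu)$, so part (a) applied to it, together with the composition principle of Waymire--Williams, shows that conditional on $Q\cdot\nu\neq0$ the measure $Q\cdot\nu$ is $Q_\alpha$-regular; Kahane's characterisation of the action of $Q_\alpha$ (it kills measures carried by sets of finite $\alpha$-dimensional Hausdorff measure) then forces $\underline\dim_H(Q\cdot\nu)\ge\alpha$. Indeed the paper remarks that a direct moment argument for the first inequality works only if one additionally assumes $\mathbb E(X^q)<\infty$ for some $q>1$, which confirms that your pointwise claim $\limsup_n\tfrac1n\log Z^{(x|_n)}\le 0$, plausible as it is, cannot be secured by the moment route under the stated hypotheses; as written, the lower bound in (c) stands unproved in your proposal.
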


\begin{corollary}\label{co-ed}
Let $\nu$ be a Borel probability measure on $\Sigma$.  Assume that $\nu$ is exact-dimensional with dimension $D>h_X$. With probability 1, conditional on $Q\cdot\nu\neq0$, $Q\cdot\nu$ is exact-dimensional with dimension $D-h_X$. 
\end{corollary}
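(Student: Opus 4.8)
The plan is to deduce Corollary~\ref{co-ed} directly from Theorem~\ref{thm1}(c) together with the definition of exact-dimensionality, without re-entering the martingale machinery. The hypothesis $\dim(\nu)=D$ means precisely $\underline{\dim}_H(\nu)=\overline{\dim}_P(\nu)=D$, so the assumption $D>h_X$ already gives $\underline{\dim}_H(\nu)>h_X$, which is the sufficient condition appearing in Theorem~\ref{thm1}(a). Hence $\nu$ is $Q$-regular and, in particular, $Q\cdot\nu\neq 0$ with positive probability, so the conditional statement is non-vacuous.

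\medskip

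**Next I would** feed $\underline{\dim}_H(\nu)=\overline{\dim}_P(\nu)=D$ into the chain of inequalities supplied by Theorem~\ref{thm1}(c). With probability one, conditional on $Q\cdot\nu\neq 0$, that chain reads
\[
D-h_X=\underline{\dim}_H(\nu)-h_X\le \underline{\dim}_H(Q\cdot\nu)\le \overline{\dim}_P(Q\cdot\nu)\le \overline{\dim}_P(\nu)-h_X=D-h_X.
\]
Thus both outer terms collapse to the single value $D-h_X$, forcing every intermediate inequality to be an equality. In particular $\underline{\dim}_H(Q\cdot\nu)=\overline{\dim}_P(Q\cdot\nu)=D-h_X$.

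\medskip

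**The final step** is to invoke the equivalence recorded in the paper's definitions: the identity $\underline{\dim}_H(\mu)=\overline{\dim}_P(\mu)=D'$ is exactly the definition of $\mu$ being exact-dimensional with dimension $D'$ (equivalently $\dim_{\mathrm{loc}}(\mu,x)=D'$ for $\mu$-a.e.~$x$). Applying this with $\mu=Q\cdot\nu$ and $D'=D-h_X$ yields that, conditional on $Q\cdot\nu\neq 0$, the measure $Q\cdot\nu$ is exact-dimensional with $\dim(Q\cdot\nu)=D-h_X$, as claimed.

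\medskip

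**I do not expect a genuine obstacle here**, since the corollary is essentially a specialisation of Theorem~\ref{thm1}(c) under a squeezing argument; the only point requiring a word of care is that the two-sided bound in~(c) holds almost surely on the event $\{Q\cdot\nu\neq 0\}$, so the collapse of the inequalities—and hence the exact-dimensionality—must be asserted on that same full-probability event, which is exactly the conditioning in the statement.
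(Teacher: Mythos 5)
Your proposal is correct and matches the paper's intended argument exactly: the corollary is stated without a separate proof precisely because it is the immediate specialisation of Theorem~\ref{thm1}(c) (with non-degeneracy from part~(a)) under $\underline{\dim}_H(\nu)=\overline{\dim}_P(\nu)=D$, which squeezes $\underline{\dim}_H(Q\cdot\nu)=\overline{\dim}_P(Q\cdot\nu)=D-h_X$. Your care about the almost-sure event $\{Q\cdot\nu\neq 0\}$ on which the collapse holds is exactly the right reading of the conditioning in the statement.
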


\begin{remark}
Theorem~\ref{thm1}  improves significantly Theorem B in \cite{Fan}. Indeed, define the $L^q$-spectrum of $\nu$ as  
\begin{equation}\label{Lqs}
\tau_\nu(q)=\liminf_{n\to\infty} -\frac{1}{n}\log\sum_{u\in\Lambda^n} \nu([u])^q, \quad q\ge 0.
\end{equation}
It is a concave non decreasing function. The following inequalities, which can be strict,  hold: $\tau_\nu'(1+)\le \underline \dim_H(\nu)\le \overline \dim_P(\nu)\le \tau_\nu'(1-)$ (see \cite{Ngai}). In \cite[Theorem B(a)]{Fan} it is assumed that $\mathbb{E}_{\mathbb{P}}(X^q)<\infty$ for some $q>1$, and in place of  condition \eqref{sup} one finds  $\tau_\nu'(1+)>h_X$, while in \cite[Theorem B(b)]{Fan}, in place of condition \eqref{inf} one finds $\tau_\nu'(1-)<h_X$. Also, the proof of \cite[Theorem B(c)]{Fan}  yields the weaker estimates $
  \tau_\nu'(1+)-h_X\le \underline \dim_H (Q\cdot\nu)\le \overline \dim_P (Q\cdot\nu)\le  \tau_\nu'(1-)-h_X
$ (in fact the statement of \cite[Theorem B(c)]{Fan} gives these bounds only for $\underline \dim_H (Q\cdot\nu)$). 
\end{remark}

For any probability measure $\nu$ on $\Sigma$ we may consider the sets
\[
\Sigma_{h_X,\nu}^+=\{x\in\Sigma:
\underline\dim_{{\rm loc}}(\nu,x)>h_X\}.
\]
and
\[
\Sigma_{h_X,\nu}^-=\{x\in\Sigma:
\underline\dim_{{\rm loc}}(\nu,x)<h_X\}.
\]
Items (a) and (b) of Theorem~\ref{main1} can then be precised as follows:
\begin{theorem}\label{cor1}Let $\nu$ be a Borel probability measure on $\Sigma$. 
\begin{itemize}
\item[(1)] If $\nu(\Sigma_{h_{X},\nu}^+)>0$ then $\mathbb{E}_{\mathbb P}(Z)>0$ and if $\nu(\Sigma_{h_X,\nu}^+)=1$ then $\mathbb{E}_{\mathbb P}(Z)=1$.
\item[(2)] If $\nu(\Sigma_{h_{X},\nu}^-)>0$ then $\mathbb{E}_{\mathbb P}(Z)<1$ and if $\nu(\Sigma_{h_X,\nu}^-)=1$ then $\mathbb{E}_{\mathbb P}(Z)=0$.
\end{itemize}
\end{theorem}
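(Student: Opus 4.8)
The plan is to deduce Theorem~\ref{cor1} from Theorem~\ref{main1} by pinning down the almost sure behaviour, under $\widetilde{\mathbb{Q}}$, of the weights $Y_{x|_n}$ along the ``spine'' $x$. The structural fact I would establish first is the spine decomposition of $\widetilde{\mathbb{Q}}$ afforded by {\bf (M1)}. Unwinding the definition \eqref{eq1} together with $Q_n(\omega,x)=Y_{x|_n}(\omega)=\prod_{k=1}^n X_{x|_k}(\omega)$ and the normalisation $\mathbb{E}_{\mathbb P}(Y_{x|_n})=1$, I would show that the $x$-marginal of $\widetilde{\mathbb{Q}}$ is exactly $\nu$, while, conditionally on the spine $x$, the weights $(X_{x|_k})_{k\ge 1}$ are i.i.d.\ with the size-biased law $\widehat X$ determined by $\mathbb{E}(g(\widehat X))=\mathbb{E}(X\,g(X))$. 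This is the Waymire--Williams/Lyons change of measure, here adapted to an arbitrary base measure $\nu$ in place of $\lambda$.

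Next I would compute the exponential growth rate of $Y_{x|_n}$ under $\widetilde{\mathbb{Q}}$. Since $\mathbb{E}(\log\widehat X)=\mathbb{E}(X\log X)=h_X$, and since the negative part $\mathbb{E}\big(X(\log X)^-\big)\le e^{-1}$ is always finite, the strong law of large numbers (in its extended form covering $h_X=\infty$) yields
\[
\frac1n\log Y_{x|_n}=\frac1n\sum_{k=1}^n\log X_{x|_k}\longrightarrow h_X,\qquad \widetilde{\mathbb{Q}}\text{-a.s.}
\]
Combining this with the deterministic facts $\liminf_n \frac1n\log\nu([x|_n])=-\overline\dim_{\rm loc}(\nu,x)$ and $\limsup_n\frac1n\log\nu([x|_n])=-\underline\dim_{\rm loc}(\nu,x)$ gives, for $\widetilde{\mathbb{Q}}$-a.e.\ $(\omega,x)$,
\[
\limsup_{n\to\infty}\frac1n\log\big(Y_{x|_n}(\omega)\,\nu([x|_n])\big)=h_X-\underline\dim_{\rm loc}(\nu,x).
\]

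From this identity both assertions follow by a root-test argument. On $\{x\in\Sigma_{h_X,\nu}^+\}$ the right-hand side is strictly negative, so the terms $Y_{x|_n}\nu([x|_n])$ decay exponentially and $S(\omega,x)<\infty$; on $\{x\in\Sigma_{h_X,\nu}^-\}$ it is strictly positive, so $Y_{x|_n}\nu([x|_n])\to\infty$ along a subsequence and $L(\omega,x)=\infty$. Because the $x$-marginal of $\widetilde{\mathbb{Q}}$ is $\nu$, this gives $\widetilde{\mathbb{Q}}(S<\infty)\ge\nu(\Sigma_{h_X,\nu}^+)$ and $\widetilde{\mathbb{Q}}(L=\infty)\ge\nu(\Sigma_{h_X,\nu}^-)$, with the value $1$ attained when the corresponding $\nu$-measure equals $1$. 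Feeding these into Theorem~\ref{main1}(ii) and (i) respectively yields the four implications; the degenerate case $h_X=\infty$ makes $\Sigma_{h_X,\nu}^+$ empty and $\Sigma_{h_X,\nu}^-$ of full measure, consistent with (1) being vacuous and (2) forcing $\mathbb{E}_{\mathbb P}(Z)=0$.

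The main obstacle I anticipate is the rigorous justification of the spine decomposition and of the conditional SLLN: one must verify that the change of measure \eqref{eq1}, defined through the $\widetilde{\mathcal F}_n$-consistent family and Kolmogorov's theorem, genuinely tilts only the on-spine weights to the size-biased law while keeping them independent across generations, and that the almost sure convergence valid conditionally on each fixed $x$ integrates to a $\widetilde{\mathbb{Q}}$-almost sure statement via disintegration along the marginal $\nu$. Once this probabilistic bookkeeping is secured, the remaining steps are elementary.
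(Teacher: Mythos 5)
Your proposal is correct and follows essentially the same route as the paper: the paper proves your spine decomposition in its unconditional form, showing by a direct joint-moment computation under $\widetilde{\mathbb{Q}}$ that the variables $U_n=\log X_{x|_n}$ are i.i.d.\ with $\mathbb{E}_{\widetilde{\mathbb{Q}}}(f(U_n))=\mathbb{E}_{\mathbb{P}}(Xf(\log X))$, then applies the strong law of large numbers (with $h_X\in[0,\infty]$, the negative part being integrable since $x\log x\ge -e^{-1}$) to get $\frac1n\log Y_{x|_n}\to h_X$ $\widetilde{\mathbb{Q}}$-almost surely. The subsequent comparison with $\underline\dim_{{\rm loc}}(\nu,x)$, the identity $\widetilde{\mathbb{Q}}(\Omega\times\Sigma^{\pm}_{h_X,\nu})=\nu(\Sigma^{\pm}_{h_X,\nu})$ expressing that the $x$-marginal of $\widetilde{\mathbb{Q}}$ is $\nu$, and the appeal to Theorem~\ref{main1} are exactly as in your argument.
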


The \textit{critical case} when
\[
\nu(\{x\in\Sigma:\, \underline\dim_{{\rm loc}}(\nu,x)=h_X\})>0
\]
is much more delicate.  Indeed if $\alpha=h_X\in (0,\log(b))$, there may exist  exact-dimensional measures $\nu_1$ and $\nu_2$ of Hausdorff dimension $\alpha$ such that   $\nu_1$ is $Q$-regular while  $\nu_2$ is $Q$-singular. This is the case if $P(X=e^\alpha)=e^{-\alpha}$ and $P(X=0)=1-e^{-\alpha}$; denote then $Q$ by $Q_\alpha$. In this case, Kahane showed that for $\nu$ to be $Q_\alpha$-regular,  it is enough that $\nu$ be of  finite energy with respect to the Riesz kernel $k_\alpha(x,y)=d(x,y)^{-\alpha}$ and that  for $\nu$ to be $Q_\alpha$-singular, it is enough that $\nu$ be supported by a set of finite $\alpha$-dimensional Hausdorff measure~\cite{Kahane1987}. Later, Fan proved that $\nu$ is $Q_\alpha$-regular if and only if  $\nu$ is the countable sum of Borel measures of  finite energy with respect to $k_\alpha$, and $\nu$ is $Q_\alpha$-singular if and only if $\nu$ is supported by a Borel set of null capacity with respect to $k_\alpha$~\cite{Fan1989c,Fan1990} (see also \cite{Fan2004}, and \cite{Lyons1990} for the case $\nu=\lambda$). It is worth noting that this exactly corresponds to the fact that $\nu$ is $Q_\alpha$-regular if and only if $\int_\Sigma k_\alpha(x,y)\,\nu({\rm d}y)<\infty$ for $\nu$-a.e. $x$, and $\nu$ is $Q_\alpha$-singular if and only if $\int_\Sigma k_\alpha(x,y)\,\nu({\rm d}y)=\infty$ for $\nu$-a.e. $x$, which precisely corresponds to the alternative $\widetilde {\mathbb Q}_\alpha (\{S<\infty\})=1$/$\widetilde {\mathbb Q}_\alpha (\{S=\infty\})=1$ in Theorem~\ref{main1}. 

Note also that in the case of $\alpha=\log(b)$ and $\nu$ is the uniform measure, one has  $\widetilde {\mathbb Q}_\alpha (\{S=\infty\})=1$ but $L$ is bounded, and $\nu$ is $Q_{\log(b)}$-singular as $Q_{\log(b)}\cdot \nu$ is supported by a degenerate critical Galton-Watson tree. Moreover, for any $\alpha\in (0,\log(b)]$, one has $\widetilde {\mathbb Q}_\alpha (\{L=\infty\})=1$ if and only if 
\begin{equation}\label{div}
\nu\big (\limsup_{n\to\infty} \nu([x|_n])e^{n\alpha}=\infty\big )=1.
\end{equation}

\medskip

We are going to give a sufficient condition for  $Q$-singularity in the critical case when $\nu$ is $\sigma$-ergodic. When $Q=Q_\alpha$, this condition will be equivalent to \eqref{div}, which is easier to check than $\int_\Sigma k_\alpha(x,y)\,\nu({\rm d}y)=\infty$ for $\nu$-a.e.~$x$. 

\subsubsection{\bf Action of multiplicative cascades on ergodic measures under {\bf (M1)}}
\label{sec2.3}

Let us denote by $\mathcal  M(\sigma)$ and $\mathcal E(\sigma)$  the sets of $\sigma$-invariant and $\sigma$-ergodic Borel probability measures on $\Sigma$.  They are endowed with the weak-$*$ topology. We start with a preliminary observation, whose proof we give immediately.

\begin{proposition}If $\nu\in\mathcal E(\sigma)$, then it is either $Q$-regular or $Q$-singular. 
\end{proposition}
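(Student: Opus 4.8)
The plan is to reduce the trichotomy (regular / singular / intermediate) to a zero–one law for a single event under $\widetilde{\mathbb{Q}}$. Recall from the proof of Theorem~\ref{main1} that, with $\mathbb{Q}=\widetilde{\mathbb{Q}}\circ\pi^{-1}$, one has $\mathbb{E}_{\mathbb{P}}(Z)=\mathbb{Q}(Z<\infty)=\widetilde{\mathbb{Q}}(Z<\infty)$, the last equality holding because $Z$ depends on $\omega$ only. Hence $\nu$ is $Q$-regular iff $\widetilde{\mathbb{Q}}(Z=\infty)=0$, it is $Q$-singular iff $\widetilde{\mathbb{Q}}(Z=\infty)=1$, and the intermediate case corresponds to $0<\widetilde{\mathbb{Q}}(Z=\infty)<1$. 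So it suffices to prove that, when $\nu\in\mathcal E(\sigma)$, the event $\{Z=\infty\}$ is $\widetilde{\mathbb{Q}}$-trivial.

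First I would introduce a re-rooting skew-shift. For $i\in\Lambda$ let $\theta_i$ act on the weights by $X_u(\theta_i\omega)=X_{iu}(\omega)$, let $\nu^{(i)}$ be the normalised shifted restriction $\nu^{(i)}([v])=\nu([iv])/\nu([i])$, and define $\Theta(\omega,x)=(\theta_{x_1}\omega,\sigma x)$. Writing $Z^{(i)}$ for the limit mass of the cascade in the subtree rooted at $i$ acting on $\nu^{(i)}$, the first-generation decomposition gives $Z=\sum_{i\in\Lambda}X_i\,\nu([i])\,Z^{(i)}$, while $Z\circ\Theta=Z^{(x_1)}$. The next step is to check that $\Theta$ preserves $\widetilde{\mathbb{Q}}$: testing against functions of the first $m$ spine symbols and spine weights, the density $Y_{x|_n}$ together with $\mathbb{E}(X)=1$ collapses all off-spine factors, and the $\sigma$-invariance of $\nu$ (which holds since $\nu$ is ergodic) matches $\int \phi((\sigma x)|_m)\,\mathrm{d}\nu$ with $\int\phi(x|_m)\,\mathrm{d}\nu$; this identity is a routine computation.

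I would then show that $\{Z=\infty\}$ is $\Theta$-invariant $\widetilde{\mathbb{Q}}$-a.s. Under $\widetilde{\mathbb{Q}}$ the spine weight $X_{x_1}$ is size-biased, hence positive a.s., and $\nu([x_1])>0$ since $x\in\mathrm{supp}(\nu)$ a.s.; thus $Z\ge X_{x_1}\nu([x_1])Z^{(x_1)}$ yields $\{Z\circ\Theta=\infty\}\subseteq\{Z=\infty\}$. For the reverse inclusion I would use the Peyri\`ere spine formula $\widetilde{\mathbb{Q}}(x|_n=u\mid\mathcal F_n)=Y_u\nu([u])/Z_n$, which passes to the limit as $\widetilde{\mathbb{Q}}(x_1=i\mid\mathcal F)=X_i\nu([i])Z^{(i)}/Z$; on $\{Z=\infty\}$ every index $i$ with $Z^{(i)}<\infty$ receives conditional probability $0$, so the spine almost surely enters a subtree with $Z^{(x_1)}=\infty$. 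Hence $\{Z=\infty\}=\{Z\circ\Theta=\infty\}$ up to a $\widetilde{\mathbb{Q}}$-null set.

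The final and hardest step is ergodicity of $\Theta$ with respect to $\widetilde{\mathbb{Q}}$. Here I would exploit that $(\omega,x)\mapsto x$ is a factor onto the ergodic base $(\Sigma,\sigma,\nu)$ and that, conditionally on the spine, the spine weights are i.i.d.\ (size-biased) and the off-spine bushes are fresh and $\mathbb{P}$-distributed, so the extension is relatively Bernoulli; a relative zero–one law then forces every $\Theta$-invariant set to be measurable with respect to the base factor, and ergodicity of $\nu$ finishes. Establishing this relative triviality rigorously—propagating the i.i.d.\ freshness of the decorations to deep generations while controlling the base dynamics—is the main obstacle; once it is in hand, the $\Theta$-invariance of $\{Z=\infty\}$ gives $\widetilde{\mathbb{Q}}(Z=\infty)\in\{0,1\}$, which is exactly the claimed dichotomy.
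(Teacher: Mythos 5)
Your reduction of the proposition to the zero--one law $\widetilde{\mathbb{Q}}(Z=\infty)\in\{0,1\}$ is correct, and your invariance computation is sound: writing $Q_{n+1}(\omega,x)=X_{x_1}(\omega)\,Q_n(\theta_{x_1}\omega,\sigma x)$ and using $\mathbb{E}(X)=1$, independence, and the $\sigma$-invariance of $\nu$ does show that $\Theta$ preserves $\widetilde{\mathbb{Q}}$ (this is the one-step, non-induced analogue of the invariance half of Lemma~\ref{hatT}). The genuine gap is the identity $Z\circ\Theta=Z^{(x_1)}$, which is false for a general ergodic $\nu$. The variable $Z$ is tied to the \emph{fixed} measure $\nu$: since $X_u(\theta_{x_1}\omega)=X_{x_1u}(\omega)$, one has
\[
Z\circ\Theta(\omega,x)=Z(\theta_{x_1}\omega)=\limsup_{n\to\infty}\sum_{u\in\Lambda^n} Y_u^{[x_1]}(\omega)\,\nu([u]),
\]
i.e.\ the subtree weights at $x_1$ integrated against $\nu$, whereas the quantity entering your first-generation decomposition $Z=\sum_{i}X_i\,\nu([i])\,Z^{(i)}$ is the same weight configuration integrated against the conditioned measure $\nu^{(x_1)}([v])=\nu([x_1v])/\nu([x_1])$. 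These coincide only when $\nu^{(i)}=\nu$ for all $i$, i.e.\ essentially in the Bernoulli product case; for a general ergodic $\nu$ (say a Markov measure with some forbidden transitions) $\nu$ and $\nu^{(x_1)}$ need not even be comparable, and the finiteness of the cascade mass on one says nothing about the other. Hence $\{Z=\infty\}$ has no reason to be $\Theta$-invariant, and the zero--one law does not follow. The obstruction is structural: applying $\sigma$ forgets the prefix, while the conditional measures $\nu^{(x|_n)}$ that govern $Z$ along the spine depend on the entire forgotten past---this is exactly why the paper, when it does run re-rooting dynamics (proof of Theorem~\ref{main2}), passes to the natural extension on $\Lambda^{\mathbb Z}$ and invokes hypothesis {\bf (G)} to replace $\nu([x|_n])$ by a Birkhoff cocycle. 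Ironically, the step you flag as the main obstacle, ergodicity of $\Theta$, is the reparable one: the semi-algebra argument in the proof of Lemma~\ref{hatT} adapts directly. The proof breaks at the invariance of $\{Z=\infty\}$, not at ergodicity.

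You should also know that the paper's actual proof is a two-line soft argument requiring none of this machinery: the measure $\mathbb{E}(Q\cdot\nu)$ is $\sigma$-invariant and absolutely continuous with respect to $\nu$, so ergodicity forces $\mathbb{E}(Q\cdot\nu)=c\nu$ for some $c\in[0,1]$; since the operator $\mu\mapsto\mathbb{E}(Q\cdot\mu)$ is a projection (\cite[Theorem 4]{Kahane1987b}), applying it twice gives $c^2=c$, whence $c\in\{0,1\}$, i.e.\ $\nu$ is $Q$-regular or $Q$-singular. A dynamical spine argument of your type, if repaired, would carry more information (it identifies \emph{which} event is trivial), but for this proposition it is disproportionate, and as written it does not close.
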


\begin{proof} The measure $\mathbb{E}(Q\cdot\nu)$ is both $\sigma$-invariant and absolutely continuous with respect to $\nu$. Since $\nu$ is $\sigma$-ergodic, there exists $c\in [0,1]$ such that $\mathbb{E}(Q\cdot\nu)=c\nu$. Moreover, the operator $ \mu\mapsto \mathbb{E}(Q\cdot\mu)$ is a projection (see~\cite[Theorem 4]{Kahane1987b}). Consequently $c\in\{0,1\}$. 
\end{proof}

Now let  $\nu\in \mathcal E(\sigma)$.  By Shannon-McMillan-Breiman theorem,  for $\nu$-almost every $x\in \Sigma$,
\[
\dim_{{\rm loc}}(\nu,x)=h_\nu(\sigma),
\]
where
\[
h_\nu(\sigma)=\lim_{n\to \infty}\frac{1}{n}  \sum_{u\in \Lambda^n} -\nu([u])\log\nu([u])
\]
is the measure-theoretic entropy of $\nu$. Due to our choice for the metric on $\Sigma$, we have $\dim (\nu)=h_\nu(\sigma)$, hence Theorem~\ref{thm1} already shows that if $h_\nu(\sigma)>h_X$ then $\nu$ is $Q$-regular, while if $h_\nu(\sigma)<h_X$ it is $Q$-singular.

It is natural to raise the following question:

\medskip

{\bf Q1:} if $\mathbb P(X\neq 1)>0$ and   $\nu\in\mathcal E(\sigma)$, does $h_\nu(\sigma)=h_X$ imply that $\nu$ is $Q$-singular? 

\medskip

Indeed, as we already recalled it, this is the case when $\nu$ is the measure of maximal entropy \cite[Th\'eor\`eme 1]{KP1976}, and more generally when $\nu$ is a Markov measure \cite[Theorem A]{Fan}, as was shown by Kahane and Fan respectively. However, when a general ergodic measure $\nu$ is involved this is not clear. According to Theorem~\ref{main1}, one has to  study the asymptotic behaviour of $Y_{x|_n}(\omega)\nu([x|_n])$ (this is not the approach followed by Kahane and Fan, who used functional equations which cease to exist in the general case). This amounts to understand the fluctuations of the summation of a centered random walks only associated with the multiplicative cascade, with  a term which must be thought of as a centered Birkhoff sum depending  only on $\nu$:
\[
\Big (\sum_{k=1}^n \big (\log (W_k(\omega,x))-h_\nu(\sigma)\big )\Big )+ \left (nh_\nu(\sigma)+\log \nu([x|_n])\right ).
\]
When $h_X\neq h_\nu(\sigma)$, the random walk is not centered anymore and it is not hard to derive the conclusion of Theorem~\ref{thm1}(a) and $(b)$ from Theorem~\ref{main1}. Otherwise, to go beyond Kahane's and Fan's results, we need to make some additional assumptions. 

For instance, assume that $\mathbb{E}_\mathbb{P}(X(\log X)^2)<\infty$ and $X$ is not almost surely equal to a constant conditional on $X\neq 0$. In this case $\Big (S_n(\omega,x)=\sum_{k=1}^n \log W_k(\omega,x)-h_\nu(\sigma)\Big )_{n\ge 0}$ satisfies a central limit theorem (CLT) with positive variance. Then, we may meet the following  situation: $(S'_n(\omega,x)=nh_\nu(\sigma)+\log \nu([x|_n]))_{n\ge 0}$ satisfies a CLT as well. This would imply that $L(\omega,x)=\infty$ $\widetilde {\mathbb{Q}}$-almost surely. This situation holds  when $\nu\neq \lambda$ is a Gibbs measure associated with a continuous potential which is of small enough modulus of continuity (see Remark~\ref{rem2.10}(5)). 

However, we want both to avoid additional moment assumptions on the law of $X$, and to consider a wider class of ergodic measures, though this class already covers that of Markov measures considered by Fan. 

\medskip In order to control the fluctuation on the $nh_\nu(\sigma)+\log \nu([x|_n])$ part, we will  assume a Gibbs like property for $\nu$, in the following weak sense:
\begin{itemize}
\item[{\bf(G)}] There exist measurable functions $C\ge 1$ and  $f$ defined on $(\Sigma,\mathcal{B})$, finite  $\nu$-almost everywhere, and such that for $\nu$-almost every $x\in\Sigma$, for all $n\ge 1$:
\[
C(x)^{-1} e^{\sum_{k=0}^{n-1}f(\sigma^kx)}\le \nu([x|_n]) \le C(x) e^{\sum_{k=0}^{n-1}f(\sigma^kx)}.
\]
\end{itemize}
For $x\in\Sigma$ and $n\ge 1$ we may define $g_n(x)=\frac{\nu([x|_n])}{\nu([\sigma x|_{n-1}])}$, then by measure differentiation theorem or martingale convergence theorem, we have that for  $\nu$-almost every $x$, $g_n(x)$ converges to a measurable function $g$. Then we may take
\[
f(x)=\log g(x) \text{ and } C(x)=\exp \left(\sup_{n\ge 1}\left \{\left |\sum_{k=0}^{n-1} \log g(\sigma^kx)-\log(g_{n-k}(\sigma^kx))\right|\right\}\right),
\]
provided $C(x)$ is finite $\nu$-almost everywhere. However in general it is very hard to analyse the convergence speed of $g_n$ to $g$, or in other words, the convergence speed in Shannon-McMillan-Breiman theorem, in order to deduce the finiteness of $C$.

Let us mention that at the moment, in the examples of ergodic measures at our disposal and which satisfy {\bf(G)}, one has $\|C\|_\infty<\infty$. This implies that the measure $\nu$ is {\it quasi-Bernoulli}, namely there exists a constant $K\ge 1$ such that  for all finite words $u$ and $v$, one has 
$$
K^{-1}\, \nu([u])\nu([v])\le \nu([u\cdot v])\le K\, \nu([u])\nu([v]).
$$
Note also that if $f$ is continuous, condition {\bf (G)} is stronger than the weak Gibbs property fulfilled by the eigenvectors of the dual of the Ruelle-Perron-Frobenius operator associated with $f$ (see \cite{Yuri00}).

\medskip

When $h_X=h_\nu(\sigma)$ and $Q$ takes the form $Q_\alpha$, then $\alpha=h_\nu(\sigma)$ and we will assume the following stronger property on $\nu$:
\begin{itemize}
\item[\textbf{(}$\mathbf{\widetilde G}$\textbf{)}:] $\nu$ satisfies {\bf (G)}, and if $\nu\neq \lambda$, then  
\begin{equation}\label{Gtilda}
\nu\big (\limsup_{n\to\infty} \sum_{k=0}^{n-1} \big (f(\sigma^kx)+h_\nu(\sigma)\big )=\infty\big )=1.
\end{equation}
\end{itemize}

Let us state our result and then make some comments on {\bf (G)} and \textbf{(}$\mathbf{\widetilde G}$\textbf{)}.

\begin{theorem}\label{thm2} Suppose that $\mathbb P(X\neq 1)>0$, so that $\mathcal Q$ is not the identity operator.  Let $\nu$ be an ergodic measure satisfying {\bf (G)}; if $Q=Q_{h_\nu(\sigma)}$, suppose that $\nu$ also satisfies  \textbf{(}$\mathbf{\widetilde G}$\textbf{)}.  Then,  if $h_\nu(\sigma)=h_X$,  the measure  $\nu$ is $Q$-singular.\end{theorem}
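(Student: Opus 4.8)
The plan is to deduce $Q$-singularity from Theorem~\ref{main1}(i) by proving that $\widetilde{\mathbb{Q}}(L=\infty)=1$. The only configuration where this route cannot work is $\nu=\lambda$ together with $Q=Q_{\log b}$ (here $h_\lambda(\sigma)=\log b=h_X$ forces $\alpha=\log b$): in that case $Y_{x|_n}\nu([x|_n])\equiv 1$ under $\widetilde{\mathbb{Q}}$, so $L$ is bounded, and singularity is instead obtained from the classical fact that $Q_{\log b}\cdot\lambda$ lives on a critical Galton--Watson tree and hence vanishes. So from now on I assume $(\nu,Q)\neq(\lambda,Q_{\log b})$. First I would identify the law of the environment along the spine under $\widetilde{\mathbb{Q}}$. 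Writing $Q_n=Y_{x|_n}$ and $\widetilde{\mathbb{P}}(\mathrm d\omega,\mathrm dx)=\mathbb{P}(\mathrm d\omega)\,\nu(\mathrm dx)$, a test-function computation using $\mathbb{E}_{\mathbb P}(Y_{x|_n})=1$ and the independence of the $X_u$ under {\bf (M1)} shows that, under $\widetilde{\mathbb{Q}}$, the coordinate $x$ is distributed according to $\nu$, the spine weights $W_k=X_{x|_k}$ are i.i.d. with the size-biased law $\widehat X$ given by $\mathbb{P}(\widehat X\in\mathrm dw)=w\,\mathbb{P}(X\in\mathrm dw)$, and $x$ is independent of $(W_k)_{k\ge1}$. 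In particular $\mathbb{E}_{\widetilde{\mathbb{Q}}}(\log W_k)=\mathbb{E}(X\log X)=h_X<\infty$, and since $\widehat X>0$ a.s. the variable $\log W_k$ is well defined and lies in $L^1$.

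Next, taking logarithms in the definition of $L$ and using the lower bound in {\bf (G)} (with $C(x)<\infty$ $\nu$-a.e.), the event $\{L=\infty\}$ contains, up to a $\widetilde{\mathbb{Q}}$-null set, the event $\{\limsup_n U_n=\infty\}$, where, after using $h_X=h_\nu(\sigma)$ to cancel the linear drift,
\[
U_n=\sum_{k=1}^n\big(\log W_k-h_X\big)+\sum_{k=0}^{n-1}\big(f(\sigma^kx)+h_\nu(\sigma)\big).
\]
This $U_n$ is exactly the Birkhoff sum $\sum_{j=0}^{n-1}\Phi\circ\widetilde\sigma^{\,j}$ of the observable $\Phi(\mathbf w,x)=\log w_1-h_X+f(x)+h_\nu(\sigma)$ for the shift $\widetilde\sigma$ on the product space carrying $\widehat X^{\otimes\mathbb N}\otimes\nu$. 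This product system is the product of a Bernoulli shift with the ergodic system $(\Sigma,\sigma,\nu)$, hence ergodic, and $\int\Phi=0$ because $\mathbb{E}_\nu(f)=-h_\nu(\sigma)$ (the information function of an ergodic measure is integrable, so $f\in L^1(\nu)$). Everything thus reduces to showing $\limsup_n U_n=\infty$ $\widetilde{\mathbb{Q}}$-a.s. for a mean-zero, integrable, ergodic Birkhoff sum.

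I would then split according to whether $\log W$ is degenerate. If $Q=Q_{h_\nu(\sigma)}$ the size-biased weight is the constant $e^{h_\nu(\sigma)}$, so $\log W_k\equiv h_X$, the random-walk part vanishes and $U_n=\sum_{k=0}^{n-1}(f(\sigma^kx)+h_\nu(\sigma))$; here the required divergence is precisely the extra hypothesis \eqref{Gtilda}. Otherwise $\xi:=\log W-h_X$ is non-degenerate, and I would argue through the oscillation theory of cocycles: by Atkinson's theorem the mean-zero integrable sum $U_n$ is recurrent ($\liminf_n|U_n|=0$ a.s.), so it suffices to show that $\Phi$ is not a coboundary, since a recurrent, non-coboundary real cocycle over an ergodic system has unbounded essential range and therefore satisfies $\limsup_n U_n=+\infty$ a.s. Non-triviality of the cocycle follows from non-tightness of $(U_n)$: by the independence of the weights from $x$ one has $|\mathbb{E}_{\widetilde{\mathbb{Q}}}(e^{itU_n})|\le|\varphi_\xi(t)|^{\,n}$, where $\varphi_\xi$ is the characteristic function of $\xi$, and choosing $t\neq0$ with $|\varphi_\xi(t)|<1$ (possible since $\xi$ is non-degenerate) forces this quantity to tend to $0$, which is incompatible with tightness and hence with $\Phi$ being a coboundary.

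The main obstacle is this last step in the non-degenerate case: upgrading mere recurrence of $U_n$ to the divergence $\limsup_n U_n=+\infty$ without any moment assumption on $X$ beyond $h_X<\infty$. This is exactly why I route the argument through the $L^1$ cocycle/essential-range machinery rather than through a central limit theorem, and why the Peyri\`ere-type independence of $(W_k)_{k\ge1}$ from $x$ is essential --- it is what rules out the coboundary alternative. The secondary technical points to secure are the integrability $f\in L^1(\nu)$ (so that $\int\Phi=0$ is meaningful and Atkinson's theorem applies) and the verification that $\widehat X^{\otimes\mathbb N}\otimes\nu$ is genuinely $\widetilde\sigma$-ergodic.
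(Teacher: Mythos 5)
Your proposal is essentially correct in outline and takes a genuinely different route from the paper, but it contains one real gap in justification that you should repair. On the comparison: the paper never works with the combined cocycle directly. It passes to the natural extension of $\nu$ on $\Lambda^{\mathbb Z}$, applies Dekking's recurrence criterion (Proposition~\ref{rec}) to the Birkhoff part $F=h_\nu(\sigma)+f$ alone, induces the skew product $\varphi(x,u)=(\sigma x,u+F(x))$ on the window $\Sigma\times[-1/2,1/2]$ so that the Birkhoff contribution is frozen in a bounded interval along return times, proves ergodicity of an enriched system $(\widehat\Omega,\widehat T,\widehat{\mathbb Q})$ (Lemma~\ref{hatT}), and then shows $\limsup_n\sum_{k<n}W\circ\widehat T^k=\infty$ by the filling scheme plus a characteristic-function rigidity argument along a stopping time (Lemma~\ref{stone}). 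This heavier machinery is what makes the argument survive under {\bf (M2)}, where the spine weights are neither i.i.d.\ nor independent of $x$ under $\widetilde{\mathbb Q}$. You instead exploit the {\bf (M1)}-specific factorisation of the Peyri\`ere measure ($x\sim\nu$ independent of i.i.d.\ size-biased weights) to treat the combined observable $\Phi$ on the product system, and your non-coboundary argument --- tightness of $g-g\circ\widetilde\sigma^n$ versus $|\mathbb{E}e^{itU_n}|\le|\varphi_\xi(t)|^n\to 0$ --- is correct and cleaner than the paper's stopping-time version. (Do note that ergodicity of the product needs the Bernoulli factor's weak mixing, not mere ergodicity of both factors; and the step ``recurrent non-coboundary $\Rightarrow\limsup U_n=+\infty$ a.s.'' is true but deserves a proof: the event is $\widetilde\sigma$-invariant, and if it were null then $R=\limsup_n U_n$ would be finite a.e.\ by recurrence, and the identity $R=\Phi+R\circ\widetilde\sigma$ would exhibit $\Phi$ as a coboundary --- this replaces your appeal to essential-range theory.)

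The genuine gap is your claim that $f\in L^1(\nu)$ ``because the information function of an ergodic measure is integrable''. Condition {\bf (G)} posits \emph{some} measurable $f$ with $C$ merely finite $\nu$-a.e.; $f$ need not coincide with the conditional information function, and from {\bf (G)} at $n=1$ one only gets $|f|\le|\log\nu([x_1])|+\log C(x)$, where $\log C$ is not assumed integrable. You cannot switch to the canonical choice $f=\log g$ either, since {\bf (G)} may fail for it (the paper explicitly remarks that finiteness of the associated $C$ is in general intractable). Consequently Atkinson's theorem, which requires an integrable zero-mean observable, does not apply to $\Phi$ as you invoke it, and ``$\int\Phi=0$'' is not even meaningful. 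Fortunately the repair is exactly the tool the paper uses: Dekking's recurrence criterion (Proposition~\ref{rec}) requires only $\frac{1}{n}S_n\Phi\to 0$ a.e., with no integrability --- and this Ces\`aro condition holds for your $\Phi$ by the strong law for the size-biased weights (using $h_X=h_\nu(\sigma)\le\log b<\infty$) together with {\bf (G)} and Shannon--McMillan--Breiman for the $F$-part. With Atkinson replaced by Dekking, and the $R=\Phi+R\circ\widetilde\sigma$ argument substituted for the essential-range appeal, your proof of Theorem~\ref{thm2} goes through and is appreciably shorter than the paper's, at the cost of not extending to the {\bf (M2)} setting of Theorem~\ref{main2}.
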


\begin{corollary}\label{corthm2} Suppose that $\mathbb P(X\neq 1)>0$. Let $\nu$ be an ergodic measure satisfying {\bf (G)}; if  $Q=Q_{h_\nu(\sigma)}$ and $h_X=h_\nu(\sigma)$, suppose  that $\nu$ also satisfies \textbf{(}$\mathbf{\widetilde G}$\textbf{)}. Then, $\nu$ is $Q$-regular if and only if $h_X<h_\nu(\sigma)$.
\end{corollary}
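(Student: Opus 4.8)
The plan is to reduce the equivalence to a three-way comparison between $h_X$ and $h_\nu(\sigma)$, and to dispatch each regime using the results already established, so that the corollary becomes a formal combination of Theorem~\ref{thm1}, the exact-dimensionality of ergodic measures, and Theorem~\ref{thm2}. First I would record that since $\nu$ is $\sigma$-ergodic, the Shannon--McMillan--Breiman theorem gives $\dim_{{\rm loc}}(\nu,x)=h_\nu(\sigma)$ for $\nu$-almost every $x$; in particular $\nu$ is exact-dimensional and
\[
\underline\dim_{{\rm loc}}(\nu,x)=\overline\dim_{{\rm loc}}(\nu,x)=h_\nu(\sigma)\quad \text{for } \nu\text{-a.e. } x,
\]
whence $\underline\dim_H(\nu)=\overline\dim_P(\nu)=h_\nu(\sigma)$. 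This identity is the only structural input needed in the two non-critical regimes.

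Next I would treat the two non-critical cases. If $h_X<h_\nu(\sigma)$, then $\underline\dim_{{\rm loc}}(\nu,x)=h_\nu(\sigma)>h_X$ for $\nu$-almost every $x$, so Theorem~\ref{thm1}(a) applies and $\nu$ is $Q$-regular. If $h_X>h_\nu(\sigma)$, then $\underline\dim_{{\rm loc}}(\nu,x)=h_\nu(\sigma)<h_X$ for $\nu$-almost every $x$, so Theorem~\ref{thm1}(b) applies and $\nu$ is $Q$-singular. It is worth emphasising that neither step uses hypothesis {\bf (G)} or \textbf{(}$\mathbf{\widetilde G}$\textbf{)}: exact-dimensionality coming from ergodicity already suffices, as was observed after the statement of Theorem~\ref{thm2}.

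It then remains to settle the critical case $h_X=h_\nu(\sigma)$, which is precisely where the extra hypotheses enter. Under {\bf (G)} (together with \textbf{(}$\mathbf{\widetilde G}$\textbf{)} when $Q=Q_{h_\nu(\sigma)}$), Theorem~\ref{thm2} asserts exactly that $\nu$ is $Q$-singular in this case. Combining the three regimes, $\nu$ is $Q$-regular if and only if $h_X<h_\nu(\sigma)$, which is the claimed equivalence; the standing assumption $\mathbb P(X\neq 1)>0$ is what excludes the trivial identity operator and makes the critical regime non-vacuous.

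The honest assessment is that all the genuine difficulty has already been absorbed into Theorem~\ref{thm2}, so the deduction of the corollary carries no real obstacle of its own. The hard part lies in the critical-case analysis underlying that theorem: one must control the joint fluctuations of the centered cascade random walk $\sum_{k=1}^n(\log W_k-h_\nu(\sigma))$ and the centered Birkhoff-type term $nh_\nu(\sigma)+\log\nu([x|_n])$ so as to force $\widetilde{\mathbb Q}(\{L=\infty\})=1$ (or, in the borderline case $\nu=\lambda$ with $Q=Q_{\log b}$, to exploit the degenerate critical Galton--Watson structure instead), and invoke Theorem~\ref{main1}(i). Hypotheses {\bf (G)} and \textbf{(}$\mathbf{\widetilde G}$\textbf{)} are exactly what make this tractable without imposing additional moment assumptions on the law of $X$.
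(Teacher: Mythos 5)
Your proposal is correct and follows exactly the route the paper intends: the paper gives no separate proof of the corollary, having already noted (via Shannon--McMillan--Breiman and exact-dimensionality) that Theorem~\ref{thm1} settles the cases $h_X<h_\nu(\sigma)$ and $h_X>h_\nu(\sigma)$, with Theorem~\ref{thm2} absorbing the critical case $h_X=h_\nu(\sigma)$. Your closing observation that the corollary is a formal combination of these results, with all genuine difficulty residing in Theorem~\ref{thm2}, matches the paper's own presentation.
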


\begin{remark}\label{rem2.10}(1) One necessarily has $h_X>0$ since  $\mathbb E(X)=1$ and $\mathbb P(X\neq 1)>0$. Thus necessarily  $h_\nu(\sigma)>0$ in Theorem~\ref{thm2}.

\medskip

\noindent 
(2) Our proof,  when $Q$ is not of the form $Q_\alpha$, will use, among other arguments, a combination of the size biasing approach recalled in Section~\ref{sec2.1}, with the so-called filling scheme, and we will also need to use the natural extension of~$\nu$ to the two sided symbolic space $\Lambda^{\mathbb Z}$. 

\medskip

\noindent 
(3) Property {\bf (G)} holds for any Gibbs measure associated with a continuous potential satisfying  Walters' condition \cite{Walters}, which guaranties the so-called bounded distortion property to hold. Note that Walters' condition strictly covers the class of potentials $f$ we previously considered, for which the Birkhoff sums of $f$ satisfy a CLT. In particular, Theorem~\ref{thm2} combined with Theorem~\ref{thm1} improves \cite[Theorem A]{Fan} that deals with the Markov measures, which are Gibbs for a potential $f(x)$ depending on the $\ell$ first letters of $x$ for some integer $\ell\ge 0$ (the previous observation using the CLT provides a second alternative proof when $Q\neq Q_{h_\nu(\sigma)}$). 

Note also that due to the bounded distortion property,  the function $C(x)$ can be taken equal to a constant. 
\medskip

\noindent (4) Property {\bf (G)} also holds for the Gibbs states associated to the norm of certain cocycles of $SL_2(\mathbb R)$, for which the potential $f$ is $\nu$-almost everywhere continuous,  and proven to be  in $L^1(\nu)$; also $C(x)$ can be taken equal to a constant in these examples (see \cite[Theorem 2.8]{BKM}). 

We have no example illustrating condition {\bf (G)} in full generality, i.e. with $\|C\|_\infty=\infty$. In particular, an inspection of the examples of non unique equilibrium states considered by Hofbauer in~\cite{Hofbauer1977} (see also~\cite{Ledrappier1977}) shows that they do not satisfy {\bf (G)}.

\medskip
\noindent
(5) Property \textbf{(}$\mathbf{\widetilde G}$\textbf{)} holds when $\nu$ is the equilibrium state of a continuous potential whose modulus of continuity $\omega$ statisfies $\omega (\delta)=O(|\log(\delta)|^{-\gamma})$ with $\gamma>2$, because in this case $\nu$ satisfies \textbf{(}$\mathbf{G}$\textbf{)}, and either $\nu=\lambda$, or the potential is not cohomologous to a constant in $L^2(\Sigma,\nu)$ and $n^{-1/2}\big (\sum_{k=0}^{n-1}(f(\sigma^kx)-h_\nu(\sigma))\big )$ converges in law (under $\nu$) to a non-degenerate centered Gaussian random variable (in other words CLT holds with a positive variance; see \cite{Liv1996,FanJiang2001}). 

Now recall the discussion started just after Theorem~\ref{cor1}.  Since when $\nu$ satisfies {\bf (G)} and differs from~$\lambda$ property~\eqref{Gtilda} is equivalent to property~\eqref{div}, when $\nu$ satisfies \textbf{(}$\mathbf{\widetilde G}$\textbf{)} Theorem~\ref{thm2} restates the fact that setting $\alpha= h_\nu(\sigma)\, (\in (0,\log(b)])$,  $\nu$ is $Q_\alpha$-singular measure. In particular, for $\nu$-almost every $x$ one has $\int_\Sigma \frac{\nu({\rm d}y)}{d(x,y)^\alpha}=\infty$, so the  $k_\alpha$-energy of $\nu$ is infinite:
\[
\iint_{\Sigma\times\Sigma} \frac{\nu(\mathrm{d}x)\nu(\mathrm{d}y)}{d(x,y)^{\alpha}}=\infty.
\]
It is not clear whether one can deduce the above equation directly from \eqref{Gtilda}. 
When $\nu$ is quasi-Bernoulli, this result can be checked directly since the almost multiplicativity property of $\nu$ implies that the $\alpha$-energy of $\nu$ is bounded from below by a constant times the series $\sum_{n\ge 1} e^{n(\alpha-\tau_\nu(2))}$, and  $\alpha=h_\nu(\sigma)\ge \tau_\nu(2)$. Indeed, the $L^q$-spectrum of $\nu$ is concave and differentiable with $\tau_\nu(1)=0$ and $\tau_\nu'(1)=h_\nu(\sigma)$ (see \cite{Heurteaux1998}). 

Moreover, any $\sigma$-ergodic measure $\nu$ is supported on its set of generic points $\mathcal G(\nu)$, and if $\nu\neq\lambda$, any Borel subset of $\mathcal G(\nu)$ of positive $\nu$-measure is of infinite $h_\nu(\sigma)$-dimensional Hausdorff measure \cite{MaWen}, so that $\nu$ cannot be supported on a set of finite $h_\nu(\sigma)$-dimensional Hausdorff measure. 

\medskip
\noindent
(6) It would be interesting to answer the following question: 

\medskip
{\bf Q2:} does any $\sigma$-ergodic measure $\nu$ with positive entropy have null $k_{h_\nu(\sigma)}$-capacity? 

Note that a positive answer to {\bf Q1} implies the same for {\bf Q2}.
\end{remark}

\begin{remark}
When $\nu=\lambda$, Waymire and Williams have exhibited multiplicative cascades of Markov type for which there exists an entropy  parameter $H$ such that for every $x\in\Sigma$, one has $\lim_{n\to\infty} -n^{-1}\log Q_n(\omega,x)=H$ for $\mathbb{Q}$-almost every $\omega$, $\lambda$ is $Q$-regular if $H<\log(b)=h_\lambda(\sigma)$, $Q$-singular if $H>\log(2)$, and depending on the parameters defining $(Q_n)_{n\ge 1}$, in the critical case $H=\log(b)$, $\lambda$ may be $Q$-singular or $Q$-regular (see \cite[Section~5]{WW96}). 
\end{remark}

We end this section with a consequence of Theorem~\ref{thm1} for the action of $Q$ on invariant measures.  

\medskip

For $\alpha\in [0, \log (b)]$ let 
$$
\overline{\mathcal E}(\alpha )=\{\nu\in \mathcal  E(\sigma): h_\nu(\sigma)>\alpha\}\quad\text{and }\quad \underline {\mathcal E}(\alpha )=\{\nu\in \mathcal  E(\sigma): h_\nu(\sigma)<\alpha\}.
$$ These sets are Borel sets in $\mathcal M(\sigma)$ endowed with the topology of weak convergence, since the measure-theoretic entropy is semi-continuous ($\sigma$ is expanding). 

\begin{corollary}\label{cor-2.11} Suppose that $\mathbb P(X\neq 1)>0$. Let $\rho$ be a probability measure on $\mathcal  E(\sigma)$. The measure $\displaystyle \int_{\overline {\mathcal E} (h_X)} \nu\, \rho (\mathrm{d}\nu)$ is $Q$-regular, while the measure $\displaystyle\int_{\underline {\mathcal E} (h_X)} \nu\, \rho (\mathrm{d}\nu)$ is $Q$-singular.

Moreover, 
\begin{equation}\label{regular-inv}
Q\cdot \displaystyle \int_{\overline {\mathcal E} (h_X)} \nu\, \rho (\mathrm{d}\nu)=\displaystyle \int_{\overline {\mathcal E} (h_X)} Q\cdot \nu\, \rho (\mathrm{d}\nu),
\end{equation}
$\mathbb{P}$-almost surely.
\end{corollary}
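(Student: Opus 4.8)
The plan is to reduce both assertions to Theorem~\ref{thm1} by computing the local dimension of the two mixtures through the non‑ergodic form of the Shannon--McMillan--Breiman theorem, and then to treat the identity \eqref{regular-inv} by a separate interchange argument. Write $\mu=\int_{\overline{\mathcal E}(h_X)}\nu\,\rho(\mathrm d\nu)$ and $\mu'=\int_{\underline{\mathcal E}(h_X)}\nu\,\rho(\mathrm d\nu)$. Both are $\sigma$-invariant, being integrals of $\sigma$-invariant measures, and by uniqueness of the ergodic decomposition the decomposition of $\mu/\|\mu\|$ (resp. $\mu'/\|\mu'\|$) is $\rho$ normalised and restricted to $\overline{\mathcal E}(h_X)$ (resp. $\underline{\mathcal E}(h_X)$); the cases $\|\mu\|=0$ or $\|\mu'\|=0$ are trivial. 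First I would invoke the general Shannon--McMillan--Breiman theorem for not necessarily ergodic invariant measures: for $\mu$-almost every $x$ one has $\lim_{n\to\infty}-\frac1n\log\mu([x|_n])=h_{\nu_x}(\sigma)$, where $\nu_x$ is the ergodic component of $x$, and similarly for $\mu'$. Since $\rho$ is carried by $\overline{\mathcal E}(h_X)$ (resp. $\underline{\mathcal E}(h_X)$), this yields $\dim_{{\rm loc}}(\mu,x)=h_{\nu_x}(\sigma)>h_X$ for $\mu$-a.e. $x$ and $\dim_{{\rm loc}}(\mu',x)=h_{\nu_x}(\sigma)<h_X$ for $\mu'$-a.e. $x$. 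Applying Theorem~\ref{thm1}(a) to $\mu/\|\mu\|$ and Theorem~\ref{thm1}(b) to $\mu'/\|\mu'\|$, and using that the cascade action is positively homogeneous so that $Q$-regularity and $Q$-singularity are insensitive to normalisation, then gives that $\mu$ is $Q$-regular and $\mu'$ is $Q$-singular.

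It remains to establish \eqref{regular-inv}. Here I would exploit that $\nu\mapsto Q_n\cdot\nu$ is linear for each fixed realisation, so that for every cylinder $[w]$ and every $n\ge |w|$,
\[
(Q_n\cdot\mu)([w])=\int_{\overline{\mathcal E}(h_X)}(Q_n\cdot\nu)([w])\,\rho(\mathrm d\nu).
\]
Because $[w]$ is clopen, $\mathbf 1_{[w]}$ is continuous, and the left-hand side converges $\mathbb P$-almost surely to $(Q\cdot\mu)([w])$. For the right-hand side, each $\nu\in\overline{\mathcal E}(h_X)$ is ergodic with $h_\nu(\sigma)>h_X$, hence $Q$-regular by Theorem~\ref{thm1}(a) together with the ergodic Shannon--McMillan--Breiman theorem; writing $Z_n^\nu:=\|Q_n\cdot\nu\|$, the family $\{Z_n^\nu\}_n$ is then uniformly integrable, so the martingale $\big((Q_n\cdot\nu)([w])\big)_{n\ge|w|}$, being dominated by $Z_n^\nu$, converges to $(Q\cdot\nu)([w])$ in $L^1(\mathbb P)$ with $\mathbb E_{\mathbb P}\big((Q\cdot\nu)([w])\big)=\nu([w])$. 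Consequently $\mathbb E_{\mathbb P}\big|(Q_n\cdot\nu)([w])-(Q\cdot\nu)([w])\big|\to0$ while staying bounded by $2\nu([w])\le 2$, so dominated convergence in the variable $\nu$ gives the convergence of $\int(Q_n\cdot\nu)([w])\,\rho(\mathrm d\nu)$ to $\int(Q\cdot\nu)([w])\,\rho(\mathrm d\nu)$ in $L^1(\mathbb P)$. Matching the two limits yields $(Q\cdot\mu)([w])=\int_{\overline{\mathcal E}(h_X)}(Q\cdot\nu)([w])\,\rho(\mathrm d\nu)$ $\mathbb P$-a.s. for each $[w]$; since the cylinders generate $\mathcal B$ and are stable under finite intersection, the uniqueness theorem for finite Borel measures promotes this to the measure identity \eqref{regular-inv}.

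The main obstacle is exactly this interchange of the limit in $n$ with the integral in $\nu$: it is here that one genuinely uses that \emph{every} ergodic component of $\mu$, and not merely $\mu$ itself, is $Q$-regular, since that is what supplies both the $L^1(\mathbb P)$-convergence and the uniformly $\rho$-integrable bound $2\nu([w])$ required for dominated convergence. I would also need the joint measurability of $(\omega,\nu)\mapsto (Q\cdot\nu)([w])(\omega)$ so that $\int Q\cdot\nu\,\rho(\mathrm d\nu)$ is well defined as a random measure; this is routine from the measurable dependence of the construction on the base measure and is in fact delivered by the $L^1$-limits above. By contrast, no analogue of \eqref{regular-inv} is claimed for $\mu'$, and indeed the corresponding interchange must fail: each $Z_n^\nu$ with $\nu\in\underline{\mathcal E}(h_X)$ is a non-uniformly-integrable martingale tending to $0$ only almost surely while keeping expectation $1$, so the $Q$-singularity of $\mu'$ has to be read off from the local-dimension computation of the first paragraph rather than from any termwise identity.
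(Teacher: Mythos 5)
Your proof is correct, but it takes a genuinely different route from the paper's. The paper never computes local dimensions of the mixtures: it fixes a continuous $f\ge 0$, writes $\int_\Sigma f\,\mathrm{d}\mu_n=\int_{\overline{\mathcal E}(h_X)}\big(\int_\Sigma f\,\mathrm{d}(Q_n\cdot\nu)\big)\rho(\mathrm{d}\nu)$ by Fubini--Tonelli, invokes the componentwise $Q$-regularity (Theorem~\ref{thm1} applied to each ergodic $\nu$ with $h_\nu(\sigma)>h_X$) to match expectations at the limit, and then uses Fatou's lemma to conclude that the martingale $\big(\int_\Sigma f\,\mathrm{d}\mu_n\big)_n$ is uniformly integrable; the identity \eqref{regular-inv} then falls out because the pathwise Fatou inequality $\int_{\overline{\mathcal E}(h_X)}\int_\Sigma f\,\mathrm{d}(Q\cdot\nu)\,\rho(\mathrm{d}\nu)\le \int_\Sigma f\,\mathrm{d}\big(Q\cdot\int\nu\,\rho(\mathrm{d}\nu)\big)$ has equal expectations on both sides and is therefore an almost sure equality. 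You instead identify $\dim_{\rm loc}$ of the mixtures via the non-ergodic Shannon--McMillan--Breiman theorem (the limit being the entropy of the ergodic component, which is legitimate and correctly stated) and apply Theorem~\ref{thm1}, in effect Theorem~\ref{cor1}, directly to $\mu$ and $\mu'$; and you prove \eqref{regular-inv} by an $L^1(\mathbb P)$/dominated-convergence interchange on clopen cylinder masses followed by a $\pi$-system argument. Both mechanisms rest on the same key input, the uniform integrability of $\|Q_n\cdot\nu\|$ for $\rho$-almost every component, so the proofs are cousins, but the bookkeeping is different. Your route has a concrete advantage on the singular half: the paper's one-line appeal to its Fatou inequality only bounds $\int_{\underline{\mathcal E}(h_X)} Q\cdot\nu\,\rho(\mathrm{d}\nu)$ \emph{from below} by $Q\cdot\mu'$, which is the unhelpful direction (in general a mixture of martingales each vanishing almost surely need not vanish, as the classical shrinking-interval example shows), whereas your SMB computation gives $\underline\dim_{\rm loc}(\mu',\cdot)<h_X$ $\mu'$-a.e.\ and hence $\mathbb E_{\mathbb P}(Z)=0$ directly via Theorem~\ref{cor1}(2), which is exactly the robust way to close that case. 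Conversely, the paper's route avoids the general SMB theorem and all ergodic-decomposition bookkeeping, and delivers the regularity of $\mu$ and \eqref{regular-inv} in a single stroke with test functions. Your flagged measurability point about $(\omega,\nu)\mapsto (Q\cdot\nu)([w])(\omega)$ is indeed routine (pointwise limits of the jointly measurable $(\omega,\nu)\mapsto\int_{[w]}Q_n(\omega,x)\,\nu(\mathrm{d}x)$), and your closing observation on why no analogue of \eqref{regular-inv} is claimed for $\mu'$ is accurate.
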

\begin{remark}
(1) The previous result could be sharpened if one could answer~{\bf Q1} positively. Indeed, in this case, setting $\underline{\widetilde {\mathcal E}}(\alpha)=\{\nu\in \mathcal  E(\sigma): h_\nu(\sigma)\le \alpha\}$, the measure  $\displaystyle \int_{\mathcal E(\sigma)} \nu\, \rho (\mathrm{d}\nu)$ would decompose as a sum of the $Q$-regular measure $\displaystyle \int_{\overline {\mathcal E} (h_X)} \nu\, \rho (\mathrm{d}\nu)$ and the  $Q$-singular measure  $\displaystyle\int_{\underline {\widetilde{\mathcal E}}(h_X )} \nu\, \rho (\mathrm{d}\nu)$. By \cite[Theorem 4]{Kahane1987b}, we know that such a decomposition would be unique. 

\medskip

\noindent
(2) Of course, if $\rho(\{\nu\in \mathcal E(\sigma): h_\nu(\sigma)=h_X\})=0$, the previous decomposition holds.  
\end{remark}

The next section provides an extension of the previous results on the action of Mandelbrot multiplicative cascades on ergodic measures. 

\subsubsection{\bf The action of independent multiplicative cascades on ergodic measures} \label{independent cascade}\label{sec2.4}

Now we make a slightly weaker assumption:
\begin{itemize}
\item[{\bf (M2)}] $\{(X_{uj})_{j\in\Lambda}: u\in\Lambda^*\}$ are i.i.d. random vectors with a common law $V$,
\end{itemize}
where $V=(V_0,\ldots,V_{b-1})$ is a random vector with $V_j\ge 0$ and $\mathbb{E}(V_j)=1$ for $j=0,\ldots,b-1$. Note that $V_j$, $j=0,\ldots,b-1$, are not necessarily independent.

Define 
\[
h_{V,\nu}=\sum_{j\in\Lambda} \mathbb{E}(V_j\log V_j)\nu([j]).
\]

Theorem~\ref{main1} has the following corollary.
\begin{corollary}\label{cor2}\ 
\begin{itemize}
\item[(i)] If $h_{V,\nu}<h_\nu(\sigma)$ then $\mathbb{E}_{\mathbb P}(Z)=1$, i.e., $\nu$ is $Q$-regular.
\item[(ii)] If  $h_{V,\nu}>h_\nu(\sigma)$ then $\mathbb{E}_{\mathbb P}(Z)=0$, i.e., $\nu$ is $Q$-singular.
\end{itemize}
\end{corollary}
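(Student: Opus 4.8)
The plan is to deduce both parts from Theorem~\ref{main1} by analysing the spine weight $Y_{x|_n}\nu([x|_n])$ under $\widetilde{\mathbb Q}$, the only new input compared with Section~\ref{sec2.1} being the conditional law of the spine under {\bf (M2)}. First I would record the spine decomposition of $\widetilde{\mathbb Q}$. A direct computation from \eqref{eq0} and \eqref{eq1} shows that the marginal of $x$ under $\widetilde{\mathbb Q}$ is $\nu$ (because $\mathbb E(Y_{x|_n})=1$ for each fixed $x$); and, following Lyons' change-of-measure argument \cite{Lyons97,BK04}, conditionally on $x$ the generating vectors $V^{(x|_k)}=(X_{x|_k\cdot j})_{j\in\Lambda}$, $k\ge 0$, are independent, the vector $V^{(x|_k)}$ carrying the law of $V$ size-biased by its $x_{k+1}$-th coordinate, while every vector attached to a word off the spine keeps the original law $V$. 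In particular $\mathbb E_{\widetilde{\mathbb Q}}(\log X_{x|_k}\mid x)=\mathbb E(V_{x_k}\log V_{x_k})=:\psi(x_k)$, which is finite for $j$ with $\nu([j])>0$ since $h_{V,\nu}<\infty$ forces $\mathbb E(V_j\log^+V_j)<\infty$ while $\mathbb E(V_j(\log V_j)^-)\le e^{-1}$.

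Next I would prove the key limit: $\widetilde{\mathbb Q}$-almost surely,
\[
\frac1n\log\big(Y_{x|_n}\nu([x|_n])\big)=\frac1n\sum_{k=1}^n\log X_{x|_k}+\frac1n\log\nu([x|_n])\longrightarrow h_{V,\nu}-h_\nu(\sigma).
\]
The second summand converges to $-h_\nu(\sigma)$ by the Shannon-McMillan-Breiman theorem, since $x\sim\nu$ and $\nu$ is ergodic. For the first summand I would split the indices by the value $x_k\in\Lambda$: along each subsequence $\{k:x_k=j\}$ the variables $\log X_{x|_k}$ are i.i.d.\ with finite mean $\psi(j)$, so the classical strong law applies, while Birkhoff's theorem gives $\#\{k\le n:x_k=j\}/n\to\nu([j])$; summing over $j$ yields $\sum_j\psi(j)\nu([j])=h_{V,\nu}$. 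This uses only first moments, so no extra integrability on $V$ is needed. Part (ii) is then immediate: if $h_{V,\nu}>h_\nu(\sigma)$ the limit is positive, hence $Y_{x|_n}\nu([x|_n])\to\infty$ and $L=\sup_{n}Y_{x|_n}\nu([x|_n])=\infty$ $\widetilde{\mathbb Q}$-a.s., so Theorem~\ref{main1}(i) gives $\mathbb E_{\mathbb P}(Z)=0$.

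The main obstacle is part (i), because {\bf (M2)} does \emph{not} imply property {\bf (M)}: the off-spine weight $X_{x|_k\cdot u_1}$ (with $u_1\neq x_{k+1}$) and the spine weight $X_{x|_{k+1}}$ are two coordinates of the same vector $V^{(x|_k)}$, so $\mathbb E(Y_u^{[x|_k]}\mid\mathcal H_x)=\mathbb E(V_{u_1}\mid V_{x_{k+1}})$ need not equal $1$ when the coordinates of $V$ are dependent. To bypass this I would rerun the proof of Theorem~\ref{main1}(ii) conditioning on the larger $\sigma$-field $\mathcal G_x=\sigma\big(x,\{V^{(x|_k)}:k\ge 0\}\big)$ generated by the whole spine, rather than on $\mathcal H_x$. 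Under $\widetilde{\mathbb Q}$ the factors of $Y_u^{[x|_k]}$ beyond the first step are off-spine, independent of $\mathcal G_x$ and of mean $1$, whereas the first factor $X_{x|_k\cdot u_1}$ is $\mathcal G_x$-measurable, so $\mathbb E_{\widetilde{\mathbb Q}}(Y_u^{[x|_k]}\mid\mathcal G_x)=X_{x|_k\cdot u_1}$. Feeding this into the decomposition \eqref{yn}, using $Y_{x|_k}X_{x|_k\cdot j}=Y_{x|_k\cdot j}$ and conditional Fatou (as in the proof of Theorem~\ref{main1}), gives $\widetilde{\mathbb Q}$-a.s.
\[
\mathbb E_{\widetilde{\mathbb Q}}(Z\mid\mathcal G_x)\le\sum_{k=0}^\infty\ \sum_{j\neq x_{k+1}}Y_{x|_k\cdot j}\,\nu([x|_k\cdot j])=:\widetilde S .
\]

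It then remains to show $\widetilde S<\infty$ $\widetilde{\mathbb Q}$-a.s. Bounding $\nu([x|_k\cdot j])\le\nu([x|_k])$ and writing $Y_{x|_k\cdot j}=Y_{x|_k}V_j^{(x|_k)}$ gives $\widetilde S\le\sum_k a_kR_k$ with $a_k=Y_{x|_k}\nu([x|_k])$ and $R_k=\sum_j V_j^{(x|_k)}$. By the key limit, $a_k$ decays geometrically $\widetilde{\mathbb Q}$-a.s., and conditionally on $x$ the $R_k$ are independent with one of finitely many laws, so I would finish with a Borel--Cantelli/Kolmogorov three-series argument; the only integrability required is $\mathbb E_{\widetilde{\mathbb Q}}(\log^+R_k\mid x_{k+1}=j)=\mathbb E\big(V_j\log^+(\textstyle\sum_i V_i)\big)<\infty$, which follows from $\log^+(\sum_i V_i)\le\log b+\sum_i\log^+V_i$ together with the Fenchel--Young inequality $V_j\log^+V_i\le V_j\log V_j-V_j+\max(V_i,1)$ and the hypotheses $\mathbb E(V_j\log V_j)<\infty$, $\mathbb E(V_i)=1$. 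This yields $\widetilde S<\infty$, hence $Z<\infty$ $\widetilde{\mathbb Q}$-a.s., so that $\mathbb E_{\mathbb P}(Z)=\mathbb Q(Z<\infty)=1$ (the last identity being established in the proof of Theorem~\ref{main1}), which is $Q$-regularity. The delicate point throughout is precisely this replacement of {\bf (M)} by spine-conditioning on $\mathcal G_x$, and checking that no moment assumption beyond the finiteness built into $h_{V,\nu}<\infty$ is incurred.
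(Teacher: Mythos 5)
Your proposal is correct, and its skeleton---the size-biased measure $\widetilde{\mathbb Q}$, a law of large numbers for the spine weights combined with Shannon--McMillan--Breiman, then Theorem~\ref{main1}---is the same as the paper's; but you depart from the official proof in two genuine ways. First, the paper obtains $\frac1n\sum_{k=1}^n\log X_{x|_k}\to h_{V,\nu}$ by asserting that $(U_n)_{n\ge1}$ is stationary ergodic under $\widetilde{\mathbb Q}$ (deferring the verification to the more general Lemma~\ref{hatT}) and applying Birkhoff's theorem; you instead split the indices by the letter $x_k=j$ and use the classical SLLN along conditionally i.i.d.\ subsequences together with Birkhoff for the letter frequencies. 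This is more elementary, and it also transparently covers $h_{V,\nu}=\infty$ in part (ii) (there your parenthetical finiteness remark about $\psi(j)$ no longer applies, but the SLLN with infinite mean still gives divergence to $+\infty$ since $\mathbb E(V_j(\log V_j)^-)\le e^{-1}$). Second, and more substantially, you correctly observe that {\bf (M)} genuinely fails under {\bf (M2)} when the coordinates of $V$ are dependent (e.g.\ $V_0=V_1$ nondegenerate gives $\mathbb E(V_{u_1}\mid V_{x_{k+1}})=V_{x_{k+1}}\neq1$), so Theorem~\ref{main1}(ii) cannot be invoked verbatim for part (i); the paper's proof says only that ``the conclusion easily follows from Theorem~\ref{main1}'' and leaves this adaptation implicit. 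Your repair---rerunning the proof of Theorem~\ref{main1}(ii) with the enlarged spine $\sigma$-field $\mathcal G_x$, which turns the decomposition \eqref{yn} into $\mathbb E_{\widetilde{\mathbb Q}}(Z\mid\mathcal G_x)\le\liminf_n\big(Y_{x|_n}\nu([x|_n])+\widetilde S\big)$ with $\widetilde S=\sum_{k\ge0}\sum_{j\neq x_{k+1}}Y_{x|_k\cdot j}\,\nu([x|_k\cdot j])$ (note the surviving spine term $Y_{x|_n}\nu([x|_n])$ tends to $0$ by your key limit, so it is harmless), followed by the $\log^+$-moment/Borel--Cantelli control of the size-biased sibling sums $R_k$---is exactly the standard spine-decomposition fix, and your moment bound via $V_j\log^+V_i\le V_j\log V_j-V_j+\max(V_i,1)$ correctly shows that $\mathbb E\big(V_j\log^+(\sum_iV_i)\big)<\infty$ needs nothing beyond $h_{V,\nu}<\infty$. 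In short: part (ii) matches the paper (Theorem~\ref{main1}(i) requires no {\bf (M)}); for part (i) your argument is a valid and more complete route, which in fact supplies a step that the paper's one-line reduction to Theorem~\ref{main1} tacitly requires.
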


Under {\bf (M2)}, we say that $Q$ is of $Q_\alpha$-type if there exists $\alpha>0$ such that for all $0\le j\le b-1$ one has $V_j=e^\alpha$ conditional on $\{V_j>0\}$. 

\begin{remark}\label{special case}
In  the critical case $h_{V,\nu}=h_\nu(\sigma)$, due to the possible dependence between the coordinates of $V$ a special case  must be distinguished when $Q$ is of $Q_{\log(b)}$-type, namely the case when $\mathbb P(\{\exists\,  0\le j\le b-1, \,V_j=b, \ \forall\, j'\neq j,\, V_{j'}=0\})=1$. Indeed in this case, $h_{\nu,V}=\log(b)$ so $h_{V,\nu}=h_\nu(\sigma)$ implies that $\nu$ equals $\lambda$, and $Q\cdot\nu$ is a Dirac mass almost surely; in particular $\lambda$ is $Q$-regular. Also $\widetilde{\mathbb Q}(S=\infty)=1$ and $\widetilde{\mathbb Q}(L<\infty)=1$. 
\end{remark}

\begin{theorem}\label{main2} Suppose that $\mathbb P(V\neq (1,\ldots,1))>0$ and $\mathbb P(\{\exists\,  0\le j\le b-1, \,V_j=b, \ \forall\, j'\neq j,\, V_{j'}=0\})<1$. 
Let $\nu$ is an ergodic measure satisfying {\bf (G)}; if $Q$ is of $Q_{h_\nu(\sigma)}$-type, suppose that $\nu$ also satisfies \textbf{(}$\mathbf{\widetilde G}$\textbf{)}. If $h_{V,\nu}=h_\nu(\sigma)$, then $\mathbb{E}_\mathbb{P}(Z)=0$, i.e. $\nu$ is $Q$-singular. 
\end{theorem}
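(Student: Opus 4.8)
The plan is to apply Theorem~\ref{main1}(i): it suffices to prove $\widetilde{\mathbb Q}(L=\infty)=1$, i.e. that the spine products $Y_{x|_n}\,\nu([x|_n])$ are unbounded $\widetilde{\mathbb Q}$-almost surely. First I would identify the law of the spine data under $\widetilde{\mathbb Q}$. Since $Q_n(\omega,x)=Y_{x|_n}(\omega)$ and $\widetilde{\mathbb Q}_n=Q_n\,\widetilde{\mathbb P}_n$ with $\widetilde{\mathbb P}=\mathbb P\otimes\nu$, the marginal of $x$ under $\widetilde{\mathbb Q}$ is exactly $\nu$, and, using the i.i.d. vector structure {\bf (M2)}, conditionally on $x$ the weights $W_k=X_{x|_k}$ are independent with $W_k$ distributed as the size-biasing of $V_{x_k}$; in particular $\mathbb E_{\widetilde{\mathbb Q}}(\log W_k\mid x)=\mathbb E(V_{x_k}\log V_{x_k})$, whence $\mathbb E_{\widetilde{\mathbb Q}}(\log W_1)=h_{V,\nu}$. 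Writing $\beta(x)=\mathbb E(V_{x_1}\log V_{x_1})+f(x)$ with $f$ as in {\bf (G)}, property {\bf (G)} gives, for $\nu$-a.e. $x$ and all $n$,
\[
\log\big(Y_{x|_n}\nu([x|_n])\big)=\sum_{k=1}^n\log W_k+\log\nu([x|_n])=\sum_{k=1}^n\big(\log W_k-\mathbb E(V_{x_k}\log V_{x_k})\big)+\sum_{k=0}^{n-1}\beta(\sigma^kx)+r_n(x),
\]
where $|r_n(x)|\le\log C(x)$ is bounded in $n$. Since the marginal of $x$ is $\nu$ and $\int f\,\mathrm d\nu=-h_\nu(\sigma)$, the increment $\Phi_k=\log W_k+f(\sigma^{k-1}x)$ is a stationary sequence of $\widetilde{\mathbb Q}$-mean $h_{V,\nu}-h_\nu(\sigma)=0$ in the critical regime. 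The target then reduces to $\limsup_n\sum_{k=1}^n\Phi_k=+\infty$ $\widetilde{\mathbb Q}$-a.s., as the bounded term $r_n$ is harmless.

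When $Q$ is of $Q_{h_\nu(\sigma)}$-type the size-biased spine weights are deterministic, $\log W_k=h_\nu(\sigma)$, so the random-walk part vanishes and $\sum_{k=1}^n\Phi_k=\sum_{k=0}^{n-1}\big(f(\sigma^kx)+h_\nu(\sigma)\big)$. If $\nu\neq\lambda$, property \textbf{(}$\mathbf{\widetilde G}$\textbf{)}, namely \eqref{Gtilda}, says precisely that this quantity has $\limsup=+\infty$ $\nu$-a.s., hence $\widetilde{\mathbb Q}(L=\infty)=1$. The subcase $\nu=\lambda$ (which forces $h_\nu(\sigma)=\log b$, so $Q$ is of $Q_{\log b}$-type) must be treated apart, since then \textbf{(}$\mathbf{\widetilde G}$\textbf{)} is vacuous and in fact $L$ stays bounded; here I would argue directly on $Z_n=\sum_{u\in\Lambda^n}Y_u\lambda([u])=\#\{\text{surviving words of length }n\}$, which is a Galton--Watson martingale whose offspring law has mean $1$ and, because the excluded case $\mathbb P(\exists j,\ V_j=b,\ V_{j'}=0\ \forall j'\neq j)=1$ does not hold, is non-degenerate. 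Such a critical process dies out, so $Z=0$ and $\nu$ is $Q$-singular (compare Remark~\ref{special case}).

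The main work is the case where $Q$ is not of $Q_\alpha$-type. I would pass to the two-sided natural extension of $(\Sigma,\nu,\sigma)$ and lift the weight structure, so that under (the two-sided extension of) $\widetilde{\mathbb Q}$ the sequence $(\Phi_k)$ becomes a genuine $L^1$ cocycle of zero mean over an invertible ergodic system, ergodicity following from that of $\nu$ together with the conditional independence of the weights. To this cocycle I would apply the filling scheme, obtaining recurrence: $\liminf_n|\sum_{k\le n}\Phi_k|=0$ a.s. The delicate point, and the main obstacle, is to upgrade recurrence to the one-sided statement $\limsup_n\sum_{k\le n}\Phi_k=+\infty$, since a mean-zero recurrent cocycle could a priori be a coboundary bounded above, in which case $L$ would remain finite. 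I expect to rule this out using that $Q$ is not of $Q_\alpha$-type: conditionally on the whole environment except the first spine weight, $\Phi_1=\log W_1+\mathrm{const}$ has strictly positive variance on a set of positive measure, so $\Phi$ cannot be cohomologous to a function bounded above and the essential values of the cocycle are unbounded above. Feeding this genuine randomness into the oscillation argument supplied by the filling scheme yields $\limsup_n\sum_{k\le n}\Phi_k=+\infty$, hence $\widetilde{\mathbb Q}(L=\infty)=1$ and, by Theorem~\ref{main1}(i), $\mathbb E_{\mathbb P}(Z)=0$.
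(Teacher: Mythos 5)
Your global architecture matches the paper's: reduction to $\widetilde{\mathbb{Q}}(L=\infty)=1$ via Theorem~\ref{main1}(i), identification of the size-biased spine law, disposal of the $Q_{h_\nu(\sigma)}$-type case through \eqref{Gtilda}, and the critical Galton--Watson extinction argument for $\nu=\lambda$ are all correct and coincide with what the paper does. The gaps are in the main case. First, your assertion that $(\Phi_k)$ is ``a genuine $L^1$ cocycle of zero mean'' presupposes $f\in L^1(\nu)$ with $\int f\,\mathrm{d}\nu=-h_\nu(\sigma)$, which condition {\bf (G)} does not provide: it only yields $\frac1n\sum_{k=0}^{n-1}f(\sigma^kx)\to-h_\nu(\sigma)$ for $\nu$-a.e.\ $x$ via Shannon--McMillan--Breiman, and with non-integrable parts of both signs a.e.\ Cesàro convergence does not force integrability. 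The paper is explicit on this point: Proposition~\ref{rec} (Dekking) is invoked precisely because it requires no $L^1$ hypothesis, only a.e.\ Cesàro convergence to $0$, and the potential part is then neutralized not by centering but by inducing the skew product $\varphi(x,u)=(\sigma x,u+F(x))$ on $A=\Sigma\times[-1/2,1/2]$, so that along the return times $N_n(x,t)$ the Birkhoff sums of $F$ stay in an interval of length $1$. Your plan of running the filling scheme directly on the combined cocycle $\Phi_k=\log W_k+f(\sigma^{k-1}x)$ thus rests on an unproved integrability claim, and it also mixes the possibly wild potential part into the very function to which the rigidity argument must be applied; the paper's decomposition (bounded potential contribution at return times, filling scheme applied only to the excursion sums of centered log-weights $W$) is what makes the rigidity step tractable.

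Second, the decisive step --- upgrading recurrence to $\limsup_n\sum_{k\le n}\Phi_k=+\infty$ --- is exactly where you stop at a heuristic. ``Positive conditional variance of $\Phi_1$, hence not cohomologous to a function bounded above, hence essential values unbounded above'' is a restatement of the difficulty, not its resolution: if the running maximum $G=\lim_n G_n$ were finite a.s., the filling scheme gives the identity $W=-G^-+G^+-G^+\circ\widehat T$, and one must actually extract a contradiction from it. The paper's Lemma~\ref{stone} does so by a mechanism your sketch does not supply: Poincar\'e recurrence produces a stopping time $\kappa$ with $W\circ\widehat T^{\kappa-1}>0$, at which $W\circ\widehat T^{\kappa-1}=G^+\circ\widehat T^{\kappa-1}-G^+\circ\widehat T^{\kappa}$; disintegrating $\widehat{\mathbb{Q}}$ over the induced base and using independence of the weights across excursions yields the factorization $\Phi_u=\phi_u\cdot\Phi_u\circ\varphi_A$ of conditional characteristic functions, so $\log|\phi_u|$ is a coboundary over $(A,\varphi_A,\xi)$, Birkhoff forces $|\phi_u|=1$ a.e., and the excursion sum is conditionally constant, contradicting that $Q$ is not of $Q_{h_\nu(\sigma)}$-type. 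Note also two structural points your sketch glosses over: the non-degeneracy must be established for the excursion sums of the induced map, not merely for a single weight $\log W_1$; and the induced base map $\varphi_A$ need not be ergodic, which is why the paper passes to an ergodic component $\xi$ of $\mu_A$ and then proves ergodicity of the full skew system $(\widehat\Omega,\widehat T,\widehat{\mathbb{Q}})$ in Lemma~\ref{hatT}. So you have the right skeleton, but the two steps that constitute the actual proof --- handling a possibly non-$L^1$ potential by Dekking-type inducing, and the Fourier/coboundary rigidity replacing your variance heuristic --- are missing.
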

\begin{remark}
When $\nu$ is a Bernoulli product measure, Corollary~\ref{cor2} and Theorem~\ref{main2} coincide with the result obtained by Durrett and Liggett~\cite{DuLi83} in their study of  fixed points with finite mean of the so-called smoothing transformation (\cite{DuLi83} also treats the case of solutions with infinite mean). As mentioned in Remark~\ref{rem0}(2), this is also a special case of the result obtained in \cite{Lyons97}.
\end{remark}

\subsection{Applications to the geometry of some random measures}\label{Appli}

Generalising the non-degeneracy criteria from $L^q$-spectra to local dimension has a big advantage in some applications, especially in projection theory when dealing with disintegration of measures. Indeed, in general one works with measures whose dimension properties can be controlled, though their $L^q$-spectrum seems out of reach. We shall show two applications of Theorem~\ref{thm1} in this context.

\subsubsection{\bf Absolute continuity properties of projections of random statistically self-similar measures}\label{sec2.5}

Let $\mathcal{I}=\{f_i\}_{i\in \Lambda}$ be an iterated function system (IFS) consisting of similarities on $\mathbb{R}^d$ ($d\ge 2$) of the form
\[
f_i(x)=r_i O_i x+t_i,\ x\in \mathbb{R}^d
\]
where $r_i\in(0,1)$ is the contraction ratio, $O_i\in SO(d)$ is the rotation component and $t_i\in \mathbb{R}^d$ is the translation. Once such an IFS is given, there exists a unique compact set $E$ in $\mathbb{R}^d$ such that
\[
E=\bigcup_{i\in\Lambda} f_i(E).
\]
The set $E$ is called the self-similar set of $\mathcal{I}$. One says that $\mathcal{I}$ satisfies the open set condition (OSC) if there exists an open set $V$ such that $\cup_{i\in \Lambda} f_i(V)\subset V$ and the sets in the union on the left are disjoint. Furthermore, given a probability vector $p=\{p_i\}_{i\in\Lambda}$ there exists a unique probability measure $\mu_p$ supported on $E$ such that
\[
\mu_p=\sum_{i\in\Lambda} p_i \mu_p\circ f_{i}^{-1}.
\]
This measure is called the self-similar measure associated with $\mathcal I$ and $p$. One way of seeing such a measure is to consider it as the push-forward measure of the Bernoulli measure $\nu_p$ on~$\Sigma$ w.r.t. $p$ via the canonical mapping $\Phi: \Sigma\to E$ given by
\[
\Phi((x_n)_{n=1}^\infty)=\lim_{n\to\infty} f_{x_1}\circ \cdots \circ f_{x_n} (z),
\]
where $z\in\mathbb{R}^d$ can be chosen arbitrarily, that is $\mu_p=\nu_p\circ \Phi^{-1}$.

Let $\mathcal{P}=\{[i]:i\in\Lambda\}$ denote the partition of $\Sigma$ by its first generation cylinders. For a continuous function $\varphi:\Sigma\to \mathbb{R}^d$, denote by $\mathcal{B}_\varphi$ the $\sigma$-field generated by $\varphi^{-1}\mathcal{B}(\mathbb{R}^d)$. Define the conditional information
\[
I_p(\mathcal{P},\varphi)=\sum_{B\in\mathcal{P}} -\chi_B \log \mathbb{E}_{\nu_p}(\chi_B|\mathcal{B}_\varphi)
\]
and the conditional entropy
\[
h_p(\mathcal{P},\varphi)=\mathbb{E}_{\nu_p}(I_p(\mathcal{P},\varphi))=\int_{\Sigma} I_p(\mathcal{P},\varphi)(x) \, \nu_p({\rm d}x).
\]
Finally, denote $h_{\nu_p}(\sigma)=\sum_{i\in\Lambda} -p_i\log p_i$ by $h_p$,  and define $\chi_{p,r}=\sum_{i\in\Lambda} -p_i\log r_i$ the  Lyapunov exponent associated with $\nu_p$ and $\mathcal I$. It is proved in \cite{FengHu2009} that $\mu_p=\nu_p\circ \Phi^{-1}$ is exact-dimensional with dimension
\[
\frac{h_p-h_{p}(\mathcal{P},\Phi)}{\chi_{p,r}}.
\]
Furthermore, we have the following result for its projections and fibres: For $1\le k \le d-1$ denote by $\Pi_{d,k}$ the set of orthogonal projections from $\mathbb{R}^d$ to its $k$-dimensional subspaces. Let $G=\overline{\langle O_i: i\in \Lambda\rangle}$ be the closure in $SO(d)$ of the subgroup generated by the isometries $O_i$. Let $\xi$ denote the normalised Haar measure on $G$. For $\pi\in \Pi_{d,k}$, define
\[
h_{p,\pi}(\mathcal{P},\Phi)=\int_G  h_{p}(\mathcal{P},\pi g \Phi) \,\xi(dg).
\]
It is proved in \cite{FJ14} (the case when $G=\{\mathrm{Id}\}$ is a singleton is proved by Furstenberg in \cite{Fur}) that for $\pi\in\Pi_{d,k}$ and for $\xi$-a.e. $g\in G$,
\begin{itemize}
\item[(i)] $\nu_p\circ(\pi g \Phi)^{-1}$ is exact-dimensional with dimension
\[
\frac{h_p-h_{p,\pi}(\mathcal{P},\Phi)}{\chi_{p,r}};
\]
\item[(ii)] For $\nu_p\circ(\pi g \Phi)^{-1}$-a.e. $y\in \pi g(\mathbb{R}^d)$, $(\mu_p)_{\pi g, y}$ is exact-dimensional with dimension
\[
\frac{h_{p,\pi}(\mathcal{P},\Phi)-h_p(\mathcal{P},\Phi)}{\chi_{p,r}},
\]
where $(\mu_p)_{\pi g, y}$ is the fibre measure obtained by disintegrating $\mu_p$ w.r.t. $\mu_p\circ(\pi g)^{-1}$.
\end{itemize}

Note that $(\mu_p)_{\pi g, y}$ can be interpreted as the push-forward measure through $\Phi$ of the conditional measure $(\nu_p)_{\pi g \Phi,y}$ on $\Sigma$ with respect to the measurable partition $\{[y]_\pi=(\pi g \Phi)^{-1}(y): y\in \pi g \Phi(\Sigma)\}$. Moreover, for $\nu_p$-a.e. $x\in\Sigma$, the measure $(\nu_p)_{\pi g \Phi,\pi g\Phi(x)}$ is exact-dimensional with dimension 
\[
\dim_H((\nu_p)_{\pi g \Phi,\pi g\Phi(x)})=h_{p,\pi}(\mathcal{P},\Phi).
\]
One can see this from the proof of Theorem 3.2 (iii) in \cite{FJ14}: while computing the local dimension of $(\mu_p)_{\pi g, y}=(\nu_p)_{\pi g \Phi,y}\circ \Phi^{-1}$, one uses the pull-back balls $B_\Phi(x,n):=\Phi^{-1}(B(\Phi(x), r_{x,n}))$ on the symbolic space, where $x\in \Sigma$ and $r_{x,n}$ is a suitably chosen decreasing sequence of radii. As the canonical mapping $\Phi:\Sigma\to \mathbb{R}^d$ is not necessarily one-to-one, the pull-back balls $B_\Phi(x,n)$ may be quite different comparing to cylinders, this is reflected in the diminution of the entropy by the amount $h_p(\mathcal{P},\Phi)$ in the dimension formula. But when computing the local dimension of $(\nu_p)_{\pi g \Phi,y}$, one can directly use the cylinders, or in other words, one can use the canonical mapping $\widetilde\Phi$ of arbitrary IFS with strong separation condition so that $B_{\widetilde\Phi}(x,n)$ is exactly the cylinder $[x|_n]$. This results in that the entropy $h_p(\mathcal{P},\widetilde\Phi)=0$ as $\mathcal{P}\subset \mathcal{B}=\mathcal{B}_{\widetilde\Phi}$. 

For  fibre measures, either $(\mu_p)_{\pi g, y}$ or $(\nu_p)_{\pi g \Phi,x}$, it is really hard to say anything beyond their dimensions, or their multifractal spectrum. But our new theorem makes it possible to determine whether these fibre measures are $Q$-regular, and their $Q$-regularity has some nice implications.

We shall only consider the case where {\bf (M1)} holds. The case of {\bf (M2)} would be similar but  it requires further works to determine that all typical fibre measures have the same entropy after performing the $Q$-action. Recall that $h_X=\mathbb{E}(X\log X)$. 

\begin{theorem}\label{Application2}
For arbitrary $\pi\in\Pi_{d,k}$, if $h_X<h_{p,\pi}(\mathcal{P},\Phi)$, then, with probability~$1$, conditional on $Q\cdot \nu_p\neq 0$, for $\xi$-a.e. $g\in G$, $(Q\cdot \nu_p) \circ (\pi g \Phi)^{-1}$ is absolutely continuous with respect to $\nu_p\circ(\pi g \Phi)^{-1}$ with the Radon-Nikodym derivative
\[
\pi g\Phi(\Sigma)\ni y \to \|Q\cdot (\nu_p)_{\pi g \Phi,y}\|.
\]
Furthermore, if the IFS $\mathcal{I}$ satisfies the OSC, then  $(Q\cdot \nu_p)\circ \Phi^{-1}$ is exact-dimensional with dimension
\[
\frac{h_p-h_X}{\chi_{p,r}}.
\]
\end{theorem}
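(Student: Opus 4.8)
The plan is to reduce the statement to a fibrewise disintegration identity for the limit measure $Q\cdot\nu_p$, and then to transfer the symbolic dimension of $Q\cdot\nu_p$ to Euclidean space under the OSC. First observe that since a conditional entropy cannot exceed the entropy, $h_X<h_{p,\pi}(\mathcal P,\Phi)\le h_p$; hence $\nu_p$, being exact-dimensional of (symbolic) dimension $h_p$, satisfies $\underline\dim_H(\nu_p)=h_p>h_X$, so by Theorem~\ref{thm1}(a) it is $Q$-regular, $\mathbb E_{\mathbb P}(Z)=1$ and $\mathbb E(Q\cdot\nu_p)=\nu_p$. Moreover, by \cite{FJ14}, for $\xi$-a.e. $g\in G$ and $\bar\nu^g$-a.e. $y$ (writing $\bar\nu^g:=\nu_p\circ(\pi g\Phi)^{-1}$), the fibre measure $(\nu_p)_{\pi g\Phi,y}$ is a Borel probability measure that is exact-dimensional of dimension $h_{p,\pi}(\mathcal P,\Phi)>h_X$; by Theorem~\ref{thm1}(a) and Corollary~\ref{co-ed} it is therefore $Q$-regular and, conditionally on being non-degenerate, $Q\cdot(\nu_p)_{\pi g\Phi,y}$ is exact-dimensional of dimension $h_{p,\pi}(\mathcal P,\Phi)-h_X$, with $\mathbb E(\|Q\cdot(\nu_p)_{\pi g\Phi,y}\|)=1$.

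The heart of the argument is the identity
\[
Q\cdot\nu_p=\int Q\cdot(\nu_p)_{\pi g\Phi,y}\,\bar\nu^g(\mathrm dy),\qquad \mathbb P\text{-a.s.},
\]
valid for $\xi$-a.e. $g$. For each fixed $n$, the elementary disintegration of $\nu_p$ together with $\nu_{p,n}(\mathrm dx)=Q_n(x)\,\nu_p(\mathrm dx)$ yields $Q_n\cdot\nu_p=\int Q_n\cdot(\nu_p)_{\pi g\Phi,y}\,\bar\nu^g(\mathrm dy)$. I would pass to the limit by testing against a countable determining family of nonnegative continuous functions. Fix such a $\phi$ and set $M_n^y=\int\phi\,\mathrm d(Q_n\cdot(\nu_p)_{\pi g\Phi,y})$, a nonnegative $(\mathcal F_n)$-martingale converging $\mathbb P$-a.s. to $M^y=\int\phi\,\mathrm d(Q\cdot(\nu_p)_{\pi g\Phi,y})$ by $Q$-regularity of the fibres; then $\int M_n^y\,\bar\nu^g(\mathrm dy)=\int\phi\,Q_n\,\mathrm d\nu_p\to\int\phi\,\mathrm d(Q\cdot\nu_p)$ $\mathbb P$-a.s. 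Fatou's lemma applied to the $y$-integral gives $\int\phi\,\mathrm d(Q\cdot\nu_p)\ge\int M^y\,\bar\nu^g(\mathrm dy)$; applying the same inequality to $\|\phi\|_\infty-\phi$ and combining with the mass identity $Z=\int\|Q\cdot(\nu_p)_{\pi g\Phi,y}\|\,\bar\nu^g(\mathrm dy)$ (obtained from Fatou and the fact that both sides have $\mathbb P$-expectation $1$) forces equality. Pushing the resulting identity forward by $\pi g\Phi$, and using that each $(\nu_p)_{\pi g\Phi,y}$, hence each $Q\cdot(\nu_p)_{\pi g\Phi,y}$, is carried by the closed fibre $(\pi g\Phi)^{-1}(\{y\})$, I obtain $(Q\cdot\nu_p)\circ(\pi g\Phi)^{-1}=\|Q\cdot(\nu_p)_{\pi g\Phi,\cdot}\|\cdot\bar\nu^g$, which is the asserted absolute continuity with the stated Radon--Nikodym derivative. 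A Fubini argument in $(\omega,g)$ (and in $(\omega,y)$ for the fibre regularity) upgrades the ``for each $g$, $\mathbb P$-a.s.'' conclusion into ``$\mathbb P$-a.s., for $\xi$-a.e. $g$''.

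For the last assertion I would work symbolically. Since $\nu_p$ is exact-dimensional of dimension $h_p>h_X$, Corollary~\ref{co-ed} gives that, conditionally on $Q\cdot\nu_p\neq0$, $Q\cdot\nu_p$ is exact-dimensional on $\Sigma$ of dimension $h_p-h_X$. To pass to $\mathbb R^d$ I need the Lyapunov exponent to equal $\chi_{p,r}$ along $(Q\cdot\nu_p)$-typical points: the set $B$ of $x$ for which $-n^{-1}\log r_{x|_n}\not\to\chi_{p,r}$ or for which the coding of $\Phi(x)$ is not unique is $\nu_p$-null by the law of large numbers and the OSC, and since $\mathbb E(Q\cdot\nu_p)=\nu_p$ one gets $\mathbb E_{\mathbb P}\big((Q\cdot\nu_p)(B)\big)=\nu_p(B)=0$, so $(Q\cdot\nu_p)(B)=0$ $\mathbb P$-a.s. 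Under the OSC the canonical map $\Phi$ is $\nu_p$-essentially one-to-one (so $h_p(\mathcal P,\Phi)=0$) and a ball $B(\Phi(x),r_{x|_n})$ is comparable, up to a bounded multiplicity, to the image of the cylinder $[x|_n]$; combining this with the two facts above yields, for $(Q\cdot\nu_p)$-a.e. $x$,
\[
\dim_{\rm loc}\big((Q\cdot\nu_p)\circ\Phi^{-1},\Phi(x)\big)=\frac{h_p-h_X}{\chi_{p,r}},
\]
which is the claimed exact-dimensionality.

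The step I expect to be the main obstacle is the commutation of the $Q$-action with disintegration, i.e. the central displayed identity. The difficulty is that the fibre mass martingales $\|Q_n\cdot(\nu_p)_{\pi g\Phi,y}\|$ are only \emph{conditionally} non-degenerate and admit no obvious domination uniform in $y$, so dominated convergence cannot be invoked directly. The device that rescues the argument is that uniform integrability becomes available \emph{after} integrating in $y$, through the globally $Q$-regular mass $Z_n=\int\|Q_n\cdot(\nu_p)_{\pi g\Phi,y}\|\,\bar\nu^g(\mathrm dy)$, and the two-sided Fatou comparison against $\phi$ and $\|\phi\|_\infty-\phi$ converts the resulting inequalities into equality using only that each level-$n$ disintegration is exact and that every relevant $\mathbb P$-expectation equals $1$.
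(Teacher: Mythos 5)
Your first half is correct and is essentially the paper's own argument: you commute $Q_n$ with the disintegration at each finite level, use fibrewise $Q$-regularity from Theorem~\ref{thm1}(a) (valid since $h_X<h_{p,\pi}(\mathcal{P},\Phi)=\dim_H((\nu_p)_{\pi g\Phi,\pi g\Phi(x)})$), and force the almost sure Fatou inequality to be an equality. The paper does exactly this by invoking ``the same arguments as in the proof of Corollary~\ref{cor-2.11}'', where the equality is obtained from Fatou plus equality of $\mathbb{P}$-expectations; your two-sided Fatou against $\phi$ and $\|\phi\|_\infty-\phi$ combined with the mass identity is the same mechanism in a cosmetic variant (your mass identity is itself proved by Fatou plus equal expectations). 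So, contrary to your own diagnosis, the disintegration identity is not the main obstacle.

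The genuine gap is in the OSC transfer for the dimension statement. Your easy direction is fine: $\Phi([x|_n])\subseteq B(\Phi(x),\mathrm{diam}\,\Phi([x|_n]))$ gives $\overline\dim_{\rm loc}((Q\cdot\nu_p)\circ\Phi^{-1},\Phi(x))\le (h_p-h_X)/\chi_{p,r}$ at $Q\cdot\nu_p$-a.e.\ $x$, using, as you correctly argue, that the Lyapunov-atypical set is $\nu_p$-null and hence $Q\cdot\nu_p$-null by $\mathbb{E}(Q\cdot\nu_p)=\nu_p$. But the lower bound $\underline\dim_{\rm loc}\ge (h_p-h_X)/\chi_{p,r}$ requires an \emph{upper} bound on ball masses, and bounded multiplicity only yields $(Q\cdot\nu_p)\circ\Phi^{-1}(B(\Phi(x),r))\le\sum_v Q\cdot\nu_p([v])$ over boundedly many stopping cylinders $v$ of comparable Euclidean size meeting the ball: the masses of the \emph{neighbouring} cylinders are not controlled by symbolic exact-dimensionality at the typical point $x$, since exactness only controls the nested cylinders $[x|_n]$. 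This is precisely the step the paper does not attempt directly for the global measure: it obtains the lower bound by combining the absolute continuity of the projection (which pins down $\dim((Q\cdot\nu_p)\circ(\pi g\Phi)^{-1})=(h_p-h_{p,\pi}(\mathcal{P},\Phi))/\chi_{p,r}$), the exact dimensionality of the pushed fibre measures $Q\cdot(\nu_p)_{\pi g\Phi,y}\circ\Phi^{-1}$ — transferred through $\Phi$ via the strong open set condition of \cite{Sch94} and the sandwiching lemma \cite[Lemma 8.8]{BiHaJo2011}, which is a genuinely stronger statement than bounded multiplicity — and Marstrand's theorem \cite[Theorem 5.8]{Falconer1985}; the matching upper bound is taken from the proof of \cite[Theorem 3.1(i)]{FJ14}, with the moment hypothesis replaced by the integrability of $Z$. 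Your plan could be repaired either by invoking the sandwiching lemma for $Q\cdot\nu_p$ itself, or by running a Markov-inequality/Borel--Cantelli first-moment argument with $q\in(0,1)$ over the natural covering including neighbour sums, in the spirit of the proof of Theorem~\ref{thm1}(c) (an alternative the paper itself mentions); as written, however, the key inequality is asserted rather than proved.
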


The claim regarding the absolute continuity of the projections with respect to their expectations is an extension of  \cite[Theorem 3.1(1)]{BF2018} to arbitrary self-similar IFSs and to arbitrary projections (composed with Haar measure almost every rotations). Also, in the OSC case, the fact that the Hausdorff dimensions of the measure $(Q\cdot \nu_p)\circ \Phi^{-1}$ is obtained without additional moment assumption on $X$ is an improvement with respect to \cite[Theorem 3.1(i)]{FJ14}. Note that in this case, since $h_X<h_{p,\pi}(\mathcal{P},\Phi)$, we have
\[
\dim((Q\cdot \nu_p)\circ \Phi^{-1})=\frac{h_p-h_X}{\chi_{p,r}}>\frac{h_p-h_{p,\pi}(\mathcal{P},\Phi)}{\chi_{p,r}}=\dim(\nu_p\circ(\pi g \Phi)^{-1}).
\]
Thus, we have the same phenomenon as  in  \cite[Theorem 3.1(1)]{BF2018}:  if the dimension of the random statistically self-similar measure $(Q\cdot \nu_p)\circ \Phi^{-1}$ is larger than the dimension of the projection  of its expectation $\nu_p\circ\Phi^{-1}$ through $\pi g$, then almost surely $(Q\cdot \nu_p) \circ (\pi g \Phi)^{-1}$ is absolutely continuous with respect to $\nu_p\circ(\pi g \Phi)^{-1}$.

\subsubsection{\bf Hausdorff dimension of some random measures on Bedford-McMullen carpets}\label{sec2.6}

The results of Section~\ref{secMMC} can also be used to compute the Hausdorff dimension of some Mandelbrot measures on Bedford-McMullen carpets. This approach, alternative to that used in~\cite{BF2020}, makes it possible to remove a moment assumption required in~\cite{BF2020} to get  the Hausdorff dimension. In fact, we are going to study a wider class of random measures obtained as limit of the action of multiplicative cascades  on ergodic measures.  

We fix two integers $b_1>b_2$, $\Lambda_1=\{1,\ldots,b_1\}$,  $\Lambda_2=\{1,\ldots,b_2\}$, as well as the product symbolic space $\Lambda_1^{\mathbb Z_+}\times\Lambda_2^{\mathbb Z_+}$, $\pi_2$ the canonical projection from $\Lambda_1^{\mathbb Z_+}\times\Lambda_2^{\mathbb Z_+}$ to $\Lambda_2^{\mathbb Z_+}$, and $\sigma=(\sigma_1,\sigma_2)$ the shift operation on the product space $\Lambda_1^{\mathbb Z_+}\times\Lambda_2^{\mathbb Z_+}$. Endow the space $\Lambda_1^{\mathbb Z_+}\times\Lambda_2^{\mathbb Z_+}$ with the metric 
$$
d_{(b_1,b_2)}=\left ((x_n,y_n)_{n=0}^\infty,(x'_n,y'_n)_{n=0}^\infty\right )=\max \left (b_1^{-\inf\{n\ge 0:x_n\neq x'_n\}},b_2^{-\inf\{n\ge 0:y_n\neq y'_n\}}\right ).
$$

Fix a subset $\Lambda$ of $\Lambda_1\times\Lambda_2$ of cardinality at least 2, and let $\nu$ be an ergodic measure on $(\Lambda_1^{\mathbb Z_+}\times\Lambda_2^{\mathbb Z_+},\sigma)$, supported on $\Lambda^{\mathbb Z_+}$. We identify $\nu$ with its restriction to $\Lambda^{\mathbb Z_+}$, and denote by ${\pi_2}_*\nu$ its projection to $\Sigma_2$.

It is know that $\nu$ disintegrates as ${\pi_2}_*\nu({\rm d}y) \nu^y({\rm d}x)$,  where for ${\pi_2}_*\nu$-almost every $y$, the conditional measure $\nu^y({\rm {\rm d}x})$ is supported on $\Lambda\cap (\Lambda_1^{\mathbb Z_+}\times\{y\})$, and  the relativized Shannon-Breiman-McMillan theorem (\cite[Lemma 4.1]{Bog1999})  states that $\nu^y$ is exact dimensional with dimension $h_\nu(\sigma)-h_{{\pi_2}_*\nu}(\sigma_2)$ with respect to the metric induced on $\Lambda_1^{\mathbb Z_+}\times\{y\}$ by the metric $d$.

\begin{theorem}\label{Application1} Consider a Mandelbrot multiplicative cascade $(X_u)_{u\in \Lambda^*}$ satisfying {\bf (M1)} as in Section~\ref{independent cascade}. 
Suppose that $h_X<h_\nu(\sigma)-h_{{\pi_2}_*\nu}(\sigma_2)$. Then $\nu$ is $Q$-regular, and with probability~1, conditional on $Q\cdot\nu\neq 0$, ${\pi_2}_* (Q\cdot\nu)$ is absolutely continuous with respect to ${\pi_2}_*\nu$, and $Q\cdot\nu$ is exact dimensional with 
\begin{align*}
\dim(Q\cdot\nu)&=\frac{1}{\log b_1} (h_\nu(\sigma)-h_X)+\Big (\frac{1}{\log b_2} - \frac{1}{\log b_1}\Big )h_{{\pi_2}_*\nu}(\sigma_2)
\end{align*}
with respect to the metric $d_{(b_1,b_2)}$.
\end{theorem}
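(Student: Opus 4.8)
The plan is to reduce the statement to the symbolic (entropy) dimension results of Section~\ref{secMMC}, applied \emph{both} to $\nu$ and to each conditional measure $\nu^y$, and then to convert the resulting symbolic dimensions into dimensions for the anisotropic metric $d_{(b_1,b_2)}$ by the Bedford--McMullen approximate-square device used in \cite{BF2020,FJ14}. First I would record the two symbolic inputs. As $\nu$ is ergodic it is exact-dimensional for the ultrametric on $\Lambda^{\mathbb Z_+}$ with dimension $h_\nu(\sigma)$, and the hypothesis gives $h_X<h_\nu(\sigma)-h_{{\pi_2}_*\nu}(\sigma_2)\le h_\nu(\sigma)$; hence Theorem~\ref{thm1}(a) yields that $\nu$ is $Q$-regular and Corollary~\ref{co-ed} yields that, conditional on $Q\cdot\nu\neq0$, $Q\cdot\nu$ is exact-dimensional with symbolic dimension $h_\nu(\sigma)-h_X$. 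Disintegrating $\nu=\int\nu^y\,{\pi_2}_*\nu(\mathrm dy)$, for ${\pi_2}_*\nu$-a.e.\ $y$ the fibre $\nu^y$, regarded as a Borel probability measure on $\Lambda^{\mathbb Z_+}$, is exact-dimensional with symbolic dimension $h_\nu(\sigma)-h_{{\pi_2}_*\nu}(\sigma_2)$ by relativized Shannon--McMillan--Breiman. The role of the hypothesis is exactly that this fibre dimension exceeds $h_X$, so Theorem~\ref{thm1}(a) makes \emph{each} fibre $\nu^y$ $Q$-regular and Corollary~\ref{co-ed} makes $Q\cdot\nu^y$ exact-dimensional with symbolic dimension $h_\nu(\sigma)-h_{{\pi_2}_*\nu}(\sigma_2)-h_X$.

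Next I would prove the absolute continuity and identify the projected measure. For fixed $n$ the weight $Q_n$ is a deterministic density, so linearity of disintegration gives $Q_n\cdot\nu=\int Q_n\cdot\nu^y\,{\pi_2}_*\nu(\mathrm dy)$, and since $\nu^y$ is carried by $\pi_2^{-1}(y)$ one gets the measure identity ${\pi_2}_*(Q_n\cdot\nu)=Z_n^{y}\,{\pi_2}_*\nu(\mathrm dy)$ with $Z_n^{y}:=\|Q_n\cdot\nu^y\|$. Writing $Z^{y}:=\|Q\cdot\nu^y\|$, fibre $Q$-regularity gives $\mathbb E(Z^{y})=1$ for a.e.\ $y$ and global $Q$-regularity gives $\mathbb E(Z)=1$; since $\int Z_n^{y}\,{\pi_2}_*\nu(\mathrm dy)=Z_n$ and $\int\mathbb E(Z_n^{y})\,{\pi_2}_*\nu(\mathrm dy)=1=\int\mathbb E(Z^{y})\,{\pi_2}_*\nu(\mathrm dy)$, the nonnegative functions $(\omega,y)\mapsto Z_n^{y}(\omega)$ converge $(\mathbb P\otimes{\pi_2}_*\nu)$-a.e.\ with constant total integral, so Scheff\'e's lemma gives their convergence in $L^1(\mathbb P\otimes{\pi_2}_*\nu)$. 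Testing ${\pi_2}_*(Q_n\cdot\nu)$ against continuous functions on $\Lambda_2^{\mathbb Z_+}$ and using ${\pi_2}_*(Q_n\cdot\nu)\to{\pi_2}_*(Q\cdot\nu)$ weakly, I would conclude that $\mathbb P$-a.s.\ ${\pi_2}_*(Q\cdot\nu)=Z^{y}\,{\pi_2}_*\nu(\mathrm dy)$, i.e.\ absolute continuity with Radon--Nikodym derivative $y\mapsto\|Q\cdot\nu^y\|$, and the same limiting identity identifies the disintegration of $Q\cdot\nu$ over ${\pi_2}_*(Q\cdot\nu)$ as the normalised fibre actions $(Q\cdot\nu^y)/\|Q\cdot\nu^y\|$. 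Since ${\pi_2}_*\nu$ is ergodic, this forces ${\pi_2}_*(Q\cdot\nu)$ to be exact-dimensional with symbolic dimension $h_{{\pi_2}_*\nu}(\sigma_2)$.

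Finally I would pass to the metric $d_{(b_1,b_2)}$. For $w=(x,y)$ the ball of radius $b_1^{-n}$ coincides, up to bounded multiplicative constants, with the approximate square $R_n(w)=\{w':\ x'|_n=x|_n,\ y'|_{m}=y|_{m}\}$, where $m=m(n)=\lceil n\log b_1/\log b_2\rceil$, and one has $[w|_{m}]\subset R_n(w)\subset[w|_{n}]$. Using the disintegration obtained above I would factor $(Q\cdot\nu)(R_n(w))\asymp{\pi_2}_*(Q\cdot\nu)([y|_{m}])\cdot(Q\cdot\nu)^{y}(\{x'|_n=x|_n\})$; by the exact-dimensionality of ${\pi_2}_*(Q\cdot\nu)$ the first factor is $e^{-m\,h_{{\pi_2}_*\nu}(\sigma_2)+o(m)}$, while by the fibre exact-dimensionality the conditional factor is $e^{-n(h_\nu(\sigma)-h_{{\pi_2}_*\nu}(\sigma_2)-h_X)+o(n)}$. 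Dividing $-\log(Q\cdot\nu)(R_n(w))$ by $n\log b_1$ and letting $n\to\infty$ (so $m/n\to\log b_1/\log b_2$) gives, for $Q\cdot\nu$-a.e.\ $w$,
\[
\frac{-\log(Q\cdot\nu)(R_n(w))}{n\log b_1}\longrightarrow\frac{h_\nu(\sigma)-h_X}{\log b_1}+\Big(\frac{1}{\log b_2}-\frac{1}{\log b_1}\Big)h_{{\pi_2}_*\nu}(\sigma_2),
\]
which is the asserted value and shows $Q\cdot\nu$ is exact-dimensional for $d_{(b_1,b_2)}$. The hard part is the justification of this factorisation: one must control $(Q\cdot\nu)^{y'}(\{x'|_n=x|_n\})$ not just at $y'=y$ but averaged over $y'\in[y|_m]$ --- that is, prove the relativized exact-dimensionality with enough uniformity in the fibre, which is the technical core of the approximate-square estimates of \cite{BF2020,FJ14} --- together with the $L^1(\mathbb P\otimes{\pi_2}_*\nu)$ passage to the limit underpinning the absolute continuity in the second step.
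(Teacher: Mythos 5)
Your first two steps coincide with the paper's argument. The paper likewise applies Theorem~\ref{thm1} and Corollary~\ref{co-ed} fibrewise, using the relativized Shannon--McMillan--Breiman theorem to see that $\dim \nu^y=h_\nu(\sigma)-h_{{\pi_2}_*\nu}(\sigma_2)>h_X$, so each $\nu^y$ is $Q$-regular and $Q\cdot\nu^y$ is exact-dimensional with symbolic dimension $h_\nu(\sigma)-h_{{\pi_2}_*\nu}(\sigma_2)-h_X$; and it deduces $Q$-regularity of $\nu$ together with ${\pi_2}_*(Q\cdot\nu)=\|Q\cdot\nu^y\|\,{\pi_2}_*\nu(\mathrm{d}y)$ by ``the same lines as in the proof of Corollary~\ref{cor-2.11}'', i.e.\ the Fatou/uniform-integrability argument, which is interchangeable with your Scheff\'e argument (a.e.\ convergence plus conservation of total integral). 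You are in fact slightly more explicit than the paper in identifying the disintegration of $Q\cdot\nu$ over ${\pi_2}_*(Q\cdot\nu)$ as the normalised fibre measures, a fact the paper uses implicitly. One small repair: exact dimensionality of ${\pi_2}_*(Q\cdot\nu)$ follows from absolute continuity with respect to the exact-dimensional measure ${\pi_2}_*\nu$ (local dimensions are preserved at a.e.\ point where the density is positive and finite), not from ergodicity of ${\pi_2}_*\nu$ per se --- ergodicity only enters to make ${\pi_2}_*\nu$ itself exact-dimensional via Shannon--McMillan--Breiman.

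The genuine gap is exactly where you flag it: the factorisation $(Q\cdot\nu)(R_n(w))\asymp{\pi_2}_*(Q\cdot\nu)([y|_m])\cdot(Q\cdot\nu)^y([x|_n])$ does not follow from a.e.\ exact dimensionality of the fibres, because $(Q\cdot\nu)(R_n(w))=\int_{[y|_m]}(Q\cdot\nu)^{y'}([x|_n])\,{\pi_2}_*(Q\cdot\nu)(\mathrm{d}y')$ and one needs control of the conditional measures for $y'$ ranging over the shrinking cylinder $[y|_m]$, not just at $y'=y$; as stated your proposal leaves the theorem unproved at its technical core. The paper avoids this uniformity problem altogether by invoking Lemma~\ref{lll}: with $m=Q\cdot\nu$, $\delta=h_\nu(\sigma)-h_X$ (this is where Corollary~\ref{co-ed} for the \emph{global} measure enters --- a piece of data you derive in your first step and then never use), $\underline\delta_1=\overline\delta_1=h_\nu(\sigma)-h_{{\pi_2}_*\nu}(\sigma_2)-h_X$ and $\delta_2=h_{{\pi_2}_*\nu}(\sigma_2)$, one has $\delta=\underline\delta_1+\delta_2$ and the two one-sided bounds of \eqref{dimbounds} pinch. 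The lower bound on $\underline{\dim}_{\rm loc}$ is Marstrand's theorem \cite[Theorem 5.8]{Falconer1985}, and the upper bound $\overline{\dim}_{\rm loc}\le\frac{\delta}{\log b_2}-\bigl(\frac{1}{\log b_2}-\frac{1}{\log b_1}\bigr)\underline\delta_1$ is extracted from the proof of Theorem 2.11 in \cite{FengHu2009}, which handles the transference of the fibre estimate to approximate squares by a maximal-function/density argument --- precisely the machinery your $\asymp$ would require. So the correct completion of your plan is not to prove the pointwise factorisation, but to replace your final step by this sandwich: bound the measure of $R_n(w)$ below via the full cylinder $[w|_m]$ and the global dimension $\delta$, corrected by the fibre dimension as in Feng--Hu, and above via Marstrand.
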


When $\nu$ is a Bernoulli product measure, it is easy to see that $Q\cdot\nu$ is a special case of statistically self-affine Mandelbrot measure as considered in \cite{BF2020}. The previous theorem improves the result obtained in \cite{BF2020} on the computation of $\dim_H (Q\cdot\nu)$ by removing the moment assumption $\mathbb{E}(X^q)<\infty$ for some $q>1$. However, under this moment assumption, \cite{BF2020}  computes $\dim_H (Q\cdot\nu)$ in all the cases where $Q\cdot\nu$ is non degenerate, with $\nu$ a Bernoulli product measure.

\bigskip

{\bf Organisation of the rest of the paper.}  Theorem \ref{thm1} and Theorem~\ref{cor1} are proved in Section~\ref{Proofthm1}, while the proofs of Corollaries~\ref{cor-2.11} and~\ref{cor2} are given  in Section~\ref{pfcor2}, and that of Theorem~\ref{main2} (which implies Theorem~\ref{thm2}) in Section~\ref{pfmain2}. Finally, Theorems~\ref{Application2}~and~\ref{Application1} are proved in Section~\ref{PFAPPl}.

\section{Proofs of Theorem \ref{thm1} and Theorem~\ref{cor1}}\label{Proofthm1}

Since Theorem~\ref{cor1} implies Theorem \ref{thm1}(a) and (b), we first prove Theorem~\ref{cor1} and then Theorem \ref{thm1}(c). 

\begin{proof}[Proof of Theorem~\ref{cor1}] 
For $n\ge 1$ define the random variable $U_n$ on $\widetilde{\Omega}$ by
\[
U_n(\omega,x)=\log X_{x|_n}(\omega).
\]
Given any finite sequence of non-negative measurable functions $f_{1},\ldots,f_{k}$ and any increasing sequence of integers $1\le n_1<n_2<\cdots<n_k$, we have
\begin{align*}
\mathbb{E}_{\widetilde{\mathbb{Q}}}(f_1(U_{n_1})f_2(U_{n_2})\cdots f_k(U_{n_k}))=&\mathbb{E}_{\mathbb{P}}\Big(\int_\Sigma \prod_{i=1}^{n_k}X_{x|_i}\prod_{j=1}^k f_j(\log X_{x|_{n_j}}) \, \nu({\rm d}x) \Big)\\
=&\int_\Sigma \mathbb{E}_{\mathbb{P}}\Big( \prod_{i=1}^{n_k}X_{x|_i}\prod_{j=1}^k f_j(\log X_{x|_{n_j}})\Big)\,  \nu({\rm d}x) \\
=& \int_\Sigma  \prod_{j=1}^{k}\mathbb{E}_{\mathbb{P}}\Big(X_{x|_{n_j}} f_j(\log X_{x|_{n_j}})\Big)\,  \nu({\rm d}x) \\
=&\int_\Sigma  \prod_{j=1}^{k}\mathbb{E}_{\mathbb{P}}\Big(Xf_j(\log X)\Big)\,  \nu({\rm d}x)= \prod_{j=1}^{k}\mathbb{E}_{\mathbb{P}}\Big(Xf_j(\log X)\Big).
\end{align*}
This implies that the random variables $U_n$, $n\ge 1$, are  i.i.d. under $\widetilde{\mathbb{Q}}$, and their common law is given by
\[
\mathbb{E}_{\widetilde{\mathbb{Q}}}(f(U_{n}))=\mathbb{E}_{\mathbb{P}}(Xf(\log X))
\]
for all non-negative measurable functions $f$. Consequently, since $x\log x \ge -e^{-1}$ on $\mathbb{R}_+$, the negative part of $U_1$ is $\widetilde{\mathbb{Q}}$-integrable, and
\[
\mathbb{E}_{\widetilde{\mathbb{Q}}}(U_1)=\mathbb{E}_{\mathbb{P}}(X\log X)=h_X \in [0,\infty].
\]
Then, using the strong law of large numbers we obtain that for $\widetilde{\mathbb{Q}}$-almost every $(\omega,x)\in\widetilde{\Omega}$,
\begin{equation}\label{un}
\lim_{n\to\infty}\frac{1}{n}(U_1+\cdots+U_n)=h_X.
\end{equation}

By \eqref{un}, for $\widetilde{\mathbb{Q}}$-almost every $(\omega,x)\in\Omega\times \Sigma_{h_X,\nu}^+$, we have
\[
\limsup_{n\to \infty} \frac{1}{n} \log [Y_{x|_{n}}(\omega)\nu([x|_{n}])]<0.
\]
This implies that $S(\omega,x)<\infty$ for $\widetilde{\mathbb{Q}}$-almost every $(\omega,x)\in\Omega\times \Sigma_{h_X,\nu}^+$. Note that $\widetilde{\mathbb{Q}}(\Omega\times \Sigma_{h_X,\nu}^+)=\nu(\Sigma_{h_X,\nu}^+)$. Hence, by Theorem \ref{main1}, if $\nu(\Sigma_{h_X,\nu}^+)>0$ then $\mathbb{E}_{P}(Z)>0$ and if $\nu(\Sigma_{h_X,\nu}^+)=1$, then $\mathbb{E}_{P}(Z)=1$.

By \eqref{un} we also have for  $\widetilde{\mathbb{Q}}$-almost every $(\omega,x)\in\Omega\times \Sigma_{h_X,\nu}^-$,
\[
\limsup_{n\to \infty} \frac{1}{n} \log Y_{x|_{n}}(\omega)\nu([x|_{n}])>0.
\]
This implies that $L(\omega,x)=\infty$ for $\widetilde{\mathbb{Q}}$-almost every $(\omega,x)\in\Omega\times \Sigma_{h_X,\nu}^-$. Note that $\widetilde{\mathbb{Q}}(\Omega\times \Sigma_{h_X,\nu}^-)=\nu(\Sigma_{h_X,\nu}^-)$. Hence, by Theorem \ref{main1}, if $\nu(\Sigma_{h_X,\nu}^-)>0$ then $\mathbb{E}_{P}(Z)<1$ and if $\nu(\Sigma_{h_X,\nu}^-)=1$ then $\mathbb{E}_{P}(Z)=0$.
\end{proof}

\begin{proof}[Proof of Theorem \ref{thm1}(c)]  Once Theorem \ref{thm1}(a) is known, the first inequality is proven  by using the same approach as Kahane in \cite{Kahane1987} in the case that $\nu$ is the uniform measure and Fan to get \cite[Theorem B(a)]{Fan} when $\nu$ is an ergodic $\sigma$-invariant Markov measure. It consists in  using the composition principle of multiplicative chaos actions to prove that  conditional on $Q\cdot\nu\neq 0$, this measure is  $Q_\alpha$ regular for all $\alpha \in (0, \underline \dim(\nu)- h_X)$ (recall that the multiplicative chaos $Q_\alpha$ was defined just after Theorem~\ref{cor1}, and see \cite{WW95} for a proof of the composition principle). 

\medskip 

For the second inequality, denote $\overline \dim_P(\nu)$ by $\beta$, and $\beta-\mathbb{E}_{\mathbb P}(X\log(X))$ by $\beta_X$. Fix $\epsilon>0$. For $n\in\mathbb N$ set 
$
E_{\epsilon,N}=\{x\in\Sigma: \forall\, n\ge N,\, \nu([x|_n])\ge e^{-n (\beta+\epsilon)}\}$. Then let  $\widetilde E_{\epsilon,N}$ be a compact subset of $E_{\epsilon,N}$ of measure larger than $(1-2^{-N})\nu(E_{\epsilon,N})$. Then, for any integer $p\ge N$ define $F_{\epsilon,N,p}=\{x\in \widetilde E_{\epsilon,N}: Q\cdot \nu([x|_p])< e^{-p (\beta_X+2\epsilon)}\}.
$
For any $q\in (0,1)$, we have, using Markov inequality
$$
Q\cdot \nu (F_{\epsilon,N,p})\le \sum_{\substack{u\in\Lambda^p\\\nu([u])\ge e^{-p (\beta+\epsilon)}}} \mathbf{1}_{Q\cdot \nu([u])>0}\big ((e^{-p (\beta_X+2\epsilon)}(Q\cdot \nu([u]))^{-1}\big )^{(1-q)} \,Q\cdot \nu([u]).
$$
Taking expectation, and using that  
\begin{align*}
\mathbb{E}((Q\cdot \nu([u])^q)&\le \liminf_{n\to\infty} \mathbb{E}((Q_n\cdot \nu([u])^q)\\
&= \liminf_{n\to\infty} \mathbb{E}(Y_p^q  (Q^u_{n-p}\cdot \nu([u]))^q)\\
&\le \mathbb{E}(X^q)^p \liminf_{n\to\infty} (\mathbb{E}(Q_{n-p}^u\cdot \nu([u]))^q
=\mathbb{E}(X^q)^p\nu([u])^q,
\end{align*} we get
\begin{align*}
\mathbb E(Q\cdot \nu (F_{\epsilon,N,p}))&\le \sum_{\substack{u\in\Lambda^p\\\nu([u])\ge e^{-p (\beta+\epsilon)}}} \mathbb{E}(X^q)^pe^{-p (\beta_X+2\epsilon) (1-q)} \nu([u])^{q-1} \nu([u])\\
&\le \mathbb{E}(X^q)^pe^{-p (\beta_X+2\epsilon) (1-q)+p(\beta+\epsilon)(1-q)}=e^{-p(1-q)(\beta_X(q)+\epsilon)},
\end{align*}
where $\beta_X(q)=\beta_X+\frac{\log \mathbb{E}(X^q)}{q-1}-\beta=\frac{\log \mathbb{E}(X^q)}{q-1}-\mathbb{E}_{\mathbb P}(X\log(X))=o(1)$ as $q\to 1$. Consequently we can choose $q\in (0,1)$ such that  $\mathbb E(Q\cdot \nu (F_{\epsilon,N,p}))\le e^{-p(1-q)\epsilon/2}$. By the Borel-Cantelli lemma, this implies that with probability 1, conditional on $Q\cdot \nu\neq 0$, for $Q\cdot \nu$-almost every $x\in  \widetilde E_{\epsilon,N}$, $\overline\dim_{\rm{loc}}(Q\cdot \nu,x)\le \beta_X+\epsilon$. However, by definition of $\beta$, we have $\lim_{N\to\infty} \nu(E_{\epsilon,N})=1$, so $\lim_{N\to\infty} \nu(\widetilde E_{\epsilon,N})=1$. Setting $G_\epsilon= \bigcap_{N\in\mathbb N^*} \widetilde E_{\epsilon,N}^c$, we have $Q\cdot \nu(G_\epsilon)\le Q\cdot \nu(\widetilde E_{\epsilon,N}^c)\le \liminf_{n\to\infty} Q_n\cdot \nu(\widetilde E_{\epsilon,N}^c)$ since $\widetilde E_{\epsilon,N}^c$ is open. Hence by Fatou's Lemma, $\mathbb{E}(Q\cdot \nu(G_\epsilon))\le \nu(\widetilde E_{\epsilon,N}^c)$ for all $N\ge 1$ and finally, $Q\cdot \nu(G_\epsilon)=0$ almost surely. 

Since the previous properties  hold for all $\epsilon>0$, we can get that, with probability 1, conditional on $Q\cdot\nu\neq 0$, $\overline\dim_{\rm{loc}}(Q\cdot \nu,x)\le \beta_X$ for $Q\cdot \nu$-almost every $x\in \Sigma\setminus \bigcup_{j\ge 1} G_{1/j}$, which is a set of full $Q\cdot\nu$-measure.  
\end{proof}

\begin{remark}
If we assume that $\mathbb E (X^q)<\infty$ for some $q>1$, the above approach can also be used to get the first inequality in $(c)$.   
\end{remark}

\section{Proofs of Corollaries~\ref{cor-2.11} and~\ref{cor2}}\label{pfcor2}

\begin{proof}[Proof of Corollary~\ref{cor-2.11}]
First, by definition, for any $n\ge 1$,  setting $\mu_n:=Q_n \cdot \int_{\overline {\mathcal E} (h_X)} \nu\, \rho (\mathrm{d}\nu)$, for any non-negative continuous function $f$ defined~on~$\Sigma$, one has, by using Fubini-Tonelli's Theorem,
$$
\int_\Sigma f\, {\rm d}\mu_n=\int_{\overline {\mathcal E} (h_X)}\left ( \int_\Sigma f\, {\rm d}(Q_n\cdot\nu)\right)\, \rho (\mathrm{d}\nu).
$$ 
We know from Theorem~\ref{thm1} that  for all $\nu\in \overline {\mathcal E} (h_X)$, $\mathbb P$-almost surely, $\nu$ is $Q$-regular. Consequently, an application of the Fubini-Tonelli theorem yields that this holds $\mathbb P$-almost surely, for $\rho$-almost every $\nu\in \overline {\mathcal E} (h_X)$. This implies that $ \mathbb{E}(\int_\Sigma f\, Q\cdot {\rm d}\nu)=\int_\Sigma f{\rm d}\nu$ for $\rho$-almost every $\nu\in \overline {\mathcal E} (h_X)$. Thus, 
\begin{equation}\label{expeq}
\mathbb{E}\left (\int_{\overline {\mathcal E} (h_X)}\int_\Sigma f\, {\rm d}(\lim_{n\to\infty} Q_n\cdot \nu) \, \rho (\mathrm{d}\nu)\right)=\int_\Sigma f\, {\rm d}\nu=\mathbb {E}\left (\int_{\overline {\mathcal E} (h_X)} \int_\Sigma f\, {\rm d}(Q_n\cdot \nu) \, \rho (\mathrm{d}\nu)\right).
\end{equation}
On the other hand, Fatou's lemma implies \begin{align*}
\mathbb{E}\left (\int_{\overline {\mathcal E} (h_X)}\int_\Sigma f\,{\rm d} (\lim_{n\to\infty} Q_n\cdot \nu) \, \rho (\mathrm{d}\nu)\right)&\le \mathbb {E}\left (\lim_{n\to\infty}\int_{\overline {\mathcal E} (h_X)} \int_\Sigma f\, {\rm d}(Q_n\cdot \nu) \, \rho (\mathrm{d}\nu)\right)\\
&\le 
\lim_{n\to\infty}\mathbb {E}\left (\int_{\overline {\mathcal E} (h_X)} \int_\Sigma f\, {\rm d}(Q_n\cdot \nu) \, \rho (\mathrm{d}\nu)\right).
\end{align*}
From this and \eqref{expeq} we deduce that the non-negative martingale $(\int_\Sigma f {\rm d} \mu_n)_{n\ge 1}$ is uniformly integrable, hence the measure $\int_{\overline {\mathcal E} (h_X)} \nu\, \rho (\mathrm{d}\nu)$ is $Q$-regular. Moreover, since 
\begin{equation}\label{expeq2}
\int_{\overline {\mathcal E} (h_X)}\int_\Sigma f\,{\rm d} (\lim_{n\to\infty} Q_n\cdot \nu)\,  \rho (\mathrm{d}\nu)\le \lim_{n\to\infty}\int_{\overline {\mathcal E} (h_X)} \int_\Sigma f\, {\rm d}(Q_n\cdot \nu) \,  \rho (\mathrm{d}\nu)
\end{equation}
$\mathbb{P}$-almost surely (by Fatou's lemma again), we conclude that \eqref{expeq2}  is an equality $\mathbb P$-almost surely. This yields \eqref{regular-inv}. 

\medskip

Let us next consider $\mu=\displaystyle\int_{\underline {\mathcal E} (h_X)} \nu\, \rho (\mathrm{d}\nu)$. This time, using \eqref{expeq2} and the fact that each $\nu\in \underline {\mathcal E} (h_X)$ is $Q$-singular yields the $Q$-singularity of $\mu$. 
\end{proof}

\begin{proof}[Proof of Corollary~\ref{cor2}] It is almost the same as the proof of Theorem \ref{cor1}. The only difference is that the sequence
\[
\{U_n(\omega,x)=\log X_{x|_n}(\omega)\}_{n\ge 1}
\]
is now, instead of i.i.d., ergodic under $\widetilde{\mathbb{Q}}$. One can check this easily by using the ergodicity of $\nu$ and the independence of the random vectors in different generations. We shall prove a more general case in Lemma \ref{hatT}. Hence, by Birkhoff ergodic theorem, $\widetilde{\mathbb{Q}}$-almost surely,
\[
\lim_{n\to\infty}\frac{1}{n}(U_1+\cdots+U_n)=\mathbb{E}_{\widetilde{\mathbb{Q}}}(U_1)=\sum_{j\in\Lambda} \mathbb{E}(V_j\log V_j)\nu([j])=h_{V,\nu}.
\]
On the other hand, by Shannon-McMillan-Breiman theorem, $\nu$-almost surely,
\[
\lim_{n\to\infty} \frac{-\log \nu([x|_n])}{n}=h_\nu(\sigma).
\]
Then the conclusion easily follows from Theorem \ref{main1}.
\end{proof}

\section{Proof of Theorem~\ref{main2}}\label{pfmain2}

Let us begin with the easy part of the result, that is when $Q$ is of $Q_\alpha$-type and $\nu$ satisfies \textbf{(}$\mathbf{\widetilde G}$\textbf{)}. Note that  to be in the critical case we must have $\alpha=h_\nu(\sigma)$.

If $\nu\neq \lambda$, \eqref{Gtilda} directly implies that $L(\omega,x)=\infty$ $\widetilde{\mathbb Q}$-a.e.

If $\nu=\lambda$, then $h_{V,\lambda}=h_\nu(\lambda)=\alpha=\log(b)$, which means that $\mathbb{E}\sum_{j=0}^{b-1} \mathbf{1}_{\{V_j>0\}}=1$. Since we assumed that $\mathbb P(\{\exists\,  0\le j\le b-1, \,V_j=b, \ \forall\, j'\neq j,\, V_{j'}=0\})<1$, this implies that the topological support of $Q\cdot\nu$ is contained in a degenerate critical Galton-Watson tree, i.e. it is empty, hence $Q\cdot\nu=0$ almost surely.

\medskip

To deal with the case where $Q$ is not of $Q_{h_\nu(\sigma)}$-type, we first define a larger product probability space associated with the natural extension of~$\nu$ to $\Lambda^{\mathbb Z}$. 

\subsection{Natural extension of $\nu$ and associated cascade probability space}  
 The bilateral symbolic space $\Lambda^{\mathbb Z}$ is still denoted by $\Sigma$, and  the left shift operation on $\Lambda^{\mathbb Z}$ is denote by $\sigma$. 
 
 For $x=\cdots x_{-1}x_0x_1\cdots\in\Sigma$ and $n\in\mathbb Z_+$, define  $x|_0=\emptyset$,
 \[
x|_{n}=x_1x_2\cdots x_{n}
\]
if $n\ge 1$, and 
\[
x|_n=x_{n+1}x_{n+2}\cdots x_{0}
\]
if $n\le -1$. For $u,v\in\Lambda^*$, set 
\[
[u,v]=\{x \in \Sigma: x|_{-|u|}=u, x|_{|v|}=v\},
\]
the cylinder rooted at $uv$. We shall use the convention that $[\emptyset,v]=[v]$ and $[u,\emptyset]=[u]^-$. 

\smallskip

The $\sigma$-field generated by these bilateral cylinders is denoted by $\mathcal B$. It is standard that the $\sigma$-ergodic measure $\nu$ possesses  a natural extension to the bilateral symbolic space $(\Sigma,\mathcal B)$, which is still $\sigma$-ergodic (see e.g. \cite[Ch. 6]{Breiman1968}).

\medskip

Note that property {\bf (G)} still holds for this natural extension if we redefine $f$ by $f(x):=f(x_0x_1\cdots)$ and $C(x):=C(x_0x_1\cdots)$.

For convenience we may define the probability space more precisely: 
\[
(\Omega,\mathcal{A},\mathbb{P})=\bigotimes_{u\in\Lambda^*}(\Omega_u,\mathcal{A}_u,\mathbb{P}_u),
\]
where for each $u\in\Lambda^*$ the probability space $(\Omega_u,\mathcal{A}_u,\mathbb{P}_u)$ are the same one on which the random vector $V$ is defined. For $u\in\Lambda^*$ define the projection
\[
\theta_u:\Omega\ni \omega=(\omega_v)_{v\in\Lambda^*}\to \omega_u\in \Omega_u.
\]
Then $\{(X_{uj})_{j\in\Lambda}=V\circ \theta_u:u\in\Lambda^*\}$ forms an i.i.d. sequence of random vectors on $(\Omega,\mathcal{A},\mathbb{P})$.

For $n\ge 1$ define the $\sigma$-field
\[
\widetilde{\mathcal{F}}_{n}=\sigma\{\chi_{[u,v]}X_{v}:u,v\in\Lambda^*,\ |v|\le n\}
\]
and set $\widetilde{\mathcal{F}}=\sigma(\bigcup_{n\ge 1}\widetilde{\mathcal{F}}_{n})$.

\subsection{Proof of the theorem when $Q$ is not of $Q_{h_\nu(\sigma)}$-type}

The fact that $Q$ is not of $Q_{h_\nu(\sigma)}$-type will be used to get Lemma~\ref{stone} below.

\medskip

First we need the following result on the recurrence of Birkhoff sums with zero mean essentially due to Dekking \cite{Dek}: Let $(X,\mathcal{B},T,\mu)$ be an ergodic dynamical system. Let $f:X\mapsto \mathbb{R}$ be a measurable function. For $n\ge 1$ denote by $S_nf(x)=\sum_{k=0}^{n-1} f\circ T^k(x)$ the $n$th-Birkhoff sum of $f$.  For $b>0$ and $x\in X$ define
\[
\tau_b(x)=\inf\{n\ge1: S_nf(x)\in [-b,b]\}.
\]
\begin{proposition}\label{rec}
Suppose that 
\[
\lim_{n\to\infty} \frac{1}{n}S_nf(x)=0 \quad\text{for $\mu$-a.e. $x\in X$.}
\]
Then for all $b>0$, $\tau_{b}(x)<\infty$ for $\mu$-a.e. $x\in X$. 
\end{proposition}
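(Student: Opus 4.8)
The plan is to argue by contradiction and to exploit a separation phenomenon for the partial sums sampled along the orbit visits to the ``bad'' set. Fix $b>0$ and set $A=\{x\in X:\ |S_nf(x)|>b\ \text{for all } n\ge 1\}$, so that $A=\{x:\tau_b(x)=\infty\}$ and the assertion to prove is $\mu(A)=0$. Suppose instead that $\mu(A)>0$; I will derive a contradiction with the hypothesis $n^{-1}S_nf\to 0$.

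The key observation is a pairwise separation property. Using the cocycle identity $S_mf(T^kx)=S_{k+m}f(x)-S_kf(x)$, note that if $T^kx\in A$ and $k<k'$, then choosing $m=k'-k\ge 1$ in the definition of $A$ gives $|S_{k'}f(x)-S_kf(x)|>b$. Consequently, for a fixed $x$, the real numbers $S_kf(x)$ indexed by those $k$ with $T^kx\in A$ are \emph{pairwise} more than $b$ apart. If among $0\le k<n$ there are $M=M_n(x)$ such indices, then sorting the corresponding $M$ values yields consecutive gaps exceeding $b$, so these values span an interval of length greater than $(M-1)b$, whence $\max_{0\le k<n}|S_kf(x)|>(M-1)b/2$.

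I would then bring in the two ergodic inputs. First, since $\mathbf 1_A$ is bounded, hence integrable, Birkhoff's ergodic theorem together with ergodicity of $T$ gives $M_n(x)/n\to\mu(A)>0$ for $\mu$-a.e.\ $x$; combined with the previous paragraph this forces $\max_{0\le k<n}|S_kf(x)|$ to grow at least linearly in $n$ on a set of positive measure. Second, the standing hypothesis $n^{-1}S_nf(x)\to 0$ upgrades to $n^{-1}\max_{0\le k<n}|S_kf(x)|\to 0$ for $\mu$-a.e.\ $x$, by the elementary estimate that splits the maximum into the ranges $k<N$ (a fixed finite bound) and $N\le k<n$ (where $|S_kf(x)|\le \eta k\le \eta n$). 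On the full-measure set where both conclusions hold, linear lower growth and sublinear upper growth are incompatible, so the assumption $\mu(A)>0$ is untenable and $\mu(A)=0$, which is exactly $\tau_b<\infty$ $\mu$-a.e.

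The argument is short, and the only point requiring care is that integrability of $f$ is never used: the ergodic theorem is applied solely to the indicator $\mathbf 1_A$, while the sublinear control of the partial sums comes entirely from the assumption on the Birkhoff averages. The genuinely clever step, and the one I expect to be the crux, is recognizing the pairwise $b$-separation of $\{S_kf(x):T^kx\in A\}$: it is precisely this that turns the qualitative statement ``the sums never return to $[-b,b]$'' into forced linear growth of $\max_{k<n}|S_kf|$, thereby clashing with vanishing asymptotic mean.
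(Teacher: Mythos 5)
Your proof is correct: the pairwise $b$-separation of the values $\{S_kf(x):T^kx\in A\}$ via the cocycle identity, the Birkhoff count $M_n(x)/n\to\mu(A)$ (applying the ergodic theorem only to $\mathbf{1}_A$, so that $f\in L^1(\mu)$ is indeed never needed), and the elementary upgrade to sublinearity of $\max_{0\le k<n}|S_kf(x)|$ fit together without gaps. The paper itself gives no proof of this proposition, delegating it to Dekking's article, and your argument is exactly the classical Atkinson--Dekking separation argument underlying that reference, so it is essentially the same approach as the paper's source.
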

\begin{remark}
Note here that we do not require $f$ to be in $L^1(\mu)$. By ergodicity the condition is  certainly satisfied when $f\in L^1(\mu)$ with $\int_X f \, d\mu=0$.
\end{remark}

By {\bf(G)},  for $\nu$-a.e. $x\in\Sigma$ we have
\[
-\log C(x) \le \log \nu([x|_{n}])-\sum_{k=0}^{n-1} f(\sigma^k x)\le \log C(x).
\]
Let $F=h_\nu(\sigma)+f$. Then, due to the Shannon-McMillan-Breiman theorem applied to $\nu$, for $\nu$-a.e. $x\in \Sigma$,
\[
\lim_{n\to\infty} \frac{1}{n} \sum_{k=0}^{n-1} F(\sigma^k x)=0.
\]
Consider the skew product $\varphi$ on $\Sigma\times\mathbb{R}$:
\[
\varphi(x,u)=(\sigma x, u+F(x)).
\]
It is easy to see that $\varphi$ is invertible and $\mu=\nu\times\mathcal{L}$ is $\varphi$-invariant, where $\mathcal{L}$ is the Lebesgue measure on $\mathbb{R}$. The issue here is that $\mathcal{L}$ is an infinite measure. Therefore we need to consider an induced dynamical system.

Let $A=\Sigma \times[-1/2,1/2]$, $\mathcal{B}_A=\mathcal{B}\otimes \mathcal{B}([-1/2,1/2])$, $\mu_A=\nu\times \mathcal{L}|_{[-1/2,1/2]}$ and
\[
\tau_A(x,t)=\inf\{n\ge 1: \varphi^n(x,t)\in A\} \text{ for } (x,t)\in A.
\]
By Proposition \ref{rec}, for all $0<\epsilon<1/2$ and $t\in [-\epsilon,\epsilon]$ we have
\begin{align*}
\tau_A(x,t)=&\inf\{n\ge 1: t+S_nF(x)\in[-1/2,1/2]\}\\
 \le& \inf\{n\ge 1: S_nF(x)\in[-1/2+\epsilon,1/2-\epsilon]\}\\
<&\infty
\end{align*}
for $\nu$-a.e. $x\in\Sigma$. This implies that for $\mu_A$-a.e. $(x,t)\in A$, $\tau_A(x,t)<\infty$. Define
\[
\varphi_A(x,t)=\varphi^{\tau_A(x,t)}(x,t),\ (x,t)\in A,
\]
with the convention that $\varphi_A(x,t)=(x,t)$ if $\tau_A(x,t)=\infty$.

It is easy to check that the measure $\mu_A$ is $\varphi_A$-invariant. It is not necessarily ergodic; this depends on the function $F$. But we can consider its ergodic decomposition and write it as $\mu_A=\int_{\mathcal E(\varphi_A)}\xi\, \rho({\mathrm d} \xi)$, where $\mathcal E(\varphi_A)$ stands for the set of ergodic Borel probability measures on $(A, \mathcal{B}_A,\varphi_A)$. For the simplicity of notations, in the following we pick an ergodic measure $\xi$ according to the probability measure $\rho$. 
Now consider the product space
\[
\widehat{\Omega}=\Omega\times A, \ \widehat{\mathcal{B}}=\widetilde{\mathcal{F}}\otimes \mathcal{B}(\mathbb{R})|_{\widehat{\Omega}}
\]
and define a probability measure $\widehat{\mathbb{Q}}$ on $\widehat{\Omega}$ similar to $\widetilde{\mathbb{Q}}$ by
\[
\int_{\widehat{\Omega}} f(\omega,x,t) \, \widehat{\mathbb{Q}}({\rm d}(\omega,x,t))=\int_{\widehat{\Omega}} f(\omega,x,t) Q_{n}(\omega,x) \, \mathbb{P}({\rm d}\omega) \xi({\rm d}(x,t)),
\]
for all bounded measurable functions $f$ on $(\widehat{\Omega},\widehat{\mathcal{B}})$ that are $\widetilde{\mathcal{F}}_{n}$-measurable when restricted to $\widetilde{\Omega}$. For $v\in\Lambda^*$ define $\eta_v:\Omega\to\Omega$ by
\[
\eta_v((\omega_u)_{u\in\Lambda})=(\omega_{vu})_{u\in\Lambda^*}.
\]
For $(\omega,x,t)\in \widehat{\Omega}$ define
\begin{align*}
T_\varphi(\omega,x,t)&=(\eta_{x|_1}\omega, \varphi(x,t))\\
\text{and}\quad 
\widehat{T}(\omega,x,t)&=T_\varphi^{\tau_A(x,t)}(\omega,x,t)=(\eta_{x|_{\tau_A(x,t)}}\omega,\varphi^{\tau_A(x,t)}(x,t)),
\end{align*}
with the convention that $\widehat{T}(\omega,x,t)=(\omega,x,t)$ if $\tau_A(x,t)=\infty$. 

The following two lemmas will be proved in Sections~\ref{pflem1}~and~\ref{pflem2} respectively. 
\begin{lemma}\label{hatT}
$(\widehat{\Omega},\widehat{\mathcal{B}},\widehat{T},\widehat{\mathbb{Q}})$ is ergodic.
\end{lemma}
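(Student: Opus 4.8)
The plan is to realise $(\widehat\Omega,\widehat{\mathcal B},\widehat T,\widehat{\mathbb Q})$ as a skew-product extension of the \emph{ergodic} base system $(A,\mathcal B_A,\varphi_A,\xi)$ and to reduce ergodicity of the extension to that of the base by exploiting that the cascade fibre injects fresh, mutually independent randomness at each application of $\widehat T$. First I would check that $p(\omega,x,t)=(x,t)$ is a factor map. Since $\mathbb E_{\mathbb P}(Q_n(\cdot,x))=\mathbb E(Y_{x|_n})=1$ for every $x$, one has $p_*\widehat{\mathbb Q}=\xi$ on $A$; and directly from the definitions of $\widehat T$ and $\varphi_A$ one gets $p\circ\widehat T=\varphi_A\circ p$. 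Thus $(A,\varphi_A,\xi)$ is a factor, and it is ergodic because $\xi$ was selected in the ergodic decomposition $\mu_A=\int_{\mathcal E(\varphi_A)}\xi\,\rho({\rm d}\xi)$.

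The second and central step is a \emph{spinal description} of $\widehat{\mathbb Q}$, in the spirit of the size biasing of Section~\ref{sec2.1}. On $\widetilde{\mathcal F}_{n}$ the measure $\widehat{\mathbb Q}$ has density $Q_{n}(\omega,x)=\prod_{k=1}^{n} V_{x_k}\circ\theta_{x|_{k-1}}$ against $\mathbb P\otimes\xi$, and $\Omega=\bigotimes_{u\in\Lambda^*}(\Omega_u,\mathcal A_u,\mathbb P_u)$ is a product space on which the vectors $V\circ\theta_u$ are i.i.d. Hence, conditionally on the spine $(x,t)$, under $\widehat{\mathbb Q}$ the coordinates $\omega_{x|_{k-1}}$, $k\ge 1$, lying \emph{on} the spine are independently size-biased in direction $x_k$ (a probability law, since $\mathbb E(V_{x_k})=1$), while every coordinate $\omega_{u}$ with $u$ not a prefix of $x$ keeps its law $\mathbb P_{u}$ and stays i.i.d. and independent of the spine. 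It is essential here that the weight vectors of distinct generations are carried by distinct, independent coordinates $\omega_{u}$, which is precisely the cross-generation independence invoked in the proof of Corollary~\ref{cor2}.

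With this description I would analyse the iterates, which compose (using $\eta_{w}\circ\eta_{v}=\eta_{vw}$) as $\widehat T^{n}(\omega,x,t)=(\eta_{x|_{T_n}}\omega,\varphi^{T_n}(x,t))$, where $T_n=\sum_{i=0}^{n-1}\tau_A(\varphi_A^{i}(x,t))\to\infty$. Since $(\eta_{x|_{T_n}}\omega)_{u}=\omega_{(x|_{T_n})u}$, the fibre component of $\widehat T^{n}$ depends on $\omega$ only through the coordinates indexed by words having $x|_{T_n}$ as a prefix, i.e.\ through ever deeper data. Given $g\in L^{2}(\widehat{\mathbb Q})$ with $g\circ\widehat T=g$, writing $g=g\circ\widehat T^{n}$ and letting $n\to\infty$ forces $g$ to be measurable with respect to the relative tail $\sigma$-field of the $\omega$-coordinates over the base. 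Conditionally on $(x,t)$ this relative tail is generated by two independent families — the size-biased spine coordinates $(\omega_{x|_{j}})_{j}$ (independent across $j$) and the i.i.d.\ off-spine coordinates — so by the Kolmogorov zero-one law applied fibrewise it is $\widehat{\mathbb Q}$-trivial. Therefore $g$ is $p^{-1}\mathcal B_A$-measurable, hence descends to a $\varphi_A$-invariant function on $(A,\xi)$, which is constant by ergodicity of $\xi$; thus $g$ is constant and $\widehat T$ is ergodic.

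The main obstacle is exactly this last reduction. Because $\widehat{\mathbb Q}$ is \emph{not} the product measure $\mathbb P\otimes\xi$ (the spine is size-biased), no product-system ergodicity theorem applies, and because the return time $\tau_A$ is unbounded each application of $\widehat T$ advances a \emph{random} number $T_n-T_{n-1}$ of generations along the spine. Consequently the bookkeeping of which coordinates are ``used up'' versus ``fresh'' must be carried out along the random levels $T_n$ rather than along a deterministic progression, and the fibrewise zero-one argument must simultaneously control the size-biased deep spine coordinates and exploit the genuine independence of the deep off-spine coordinates. Making this relative zero-one law rigorous — most cleanly via conditional expectations and a reverse-martingale argument at the random levels $T_n$ — is the crux; once it is in place, ergodicity of the base closes the proof.
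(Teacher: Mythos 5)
Your ergodicity argument is essentially sound but follows a genuinely different route from the paper's. The paper proves ergodicity via the semi-algebra criterion: for cylinder-type events $B_1,B_2\in\widehat{\mathcal{B}}'$ of positive measure it considers $\widehat{\mathbb{Q}}(\widehat{T}^{-n}B_1\cap B_2)$ for $n$ exceeding the depth $|u^2|$ of $B_2$; since $T_n\ge n$, the weight coordinates involved in the two events then sit in disjoint independent generations, so the quantity factorizes into a product of positive size-biased factors $\mathbb{E}_{\mathbb{P}}(X\mathbf{1}_{\{X\in I\}})$ times $\xi(\varphi_A^{-n}U_1\cap U_2)$, and ergodicity of $\xi$ supplies an $n$ making this positive. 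Your route --- spinal disintegration of $\widehat{\mathbb{Q}}$ over the factor $(A,\varphi_A,\xi)$, the observation that $g\circ\widehat{T}^n$ depends on $\omega$ only through coordinates indexed by words extending $x|_{T_n}$, then a fibrewise backward-martingale/Kolmogorov zero--one argument showing the relative tail is trivial, and finally ergodicity of the base --- can indeed be made rigorous exactly as you indicate: for $\xi$-a.e.\ $(x,t)$ one has $g(\cdot,x,t)=\mathbb{E}_{\widehat{\mathbb{Q}}_{(x,t)}}\bigl(g(\cdot,x,t)\mid\mathcal{G}_n\bigr)$ a.s., where $\mathcal{G}_n=\sigma\{\omega_w: x|_{T_n}\text{ is a prefix of }w\}$ is decreasing and $\mathcal{G}_n\subset\sigma\{\omega_w:|w|\ge n\}$, so backward martingale convergence plus triviality of $\bigcap_n\mathcal{G}_n$ under the conditional product measure gives that $g(\cdot,x,t)$ is a.s.\ constant. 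Your spinal description of $\widehat{\mathbb{Q}}$ is correct, and it is precisely the structure the paper exploits later through $\widehat{\mathbb{Q}}_{(x,t)}$ in the proof of Lemma~\ref{stone}. The paper's argument is more elementary (no disintegration or martingale machinery); yours is more conceptual and would generalize to situations where the generating semi-algebra computation is unavailable.

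There is, however, a genuine gap: you never prove that $\widehat{\mathbb{Q}}$ is $\widehat{T}$-invariant, which is the first --- and computationally heavier --- half of the paper's proof and is part of the content of the lemma (the quadruple is asserted to be an ergodic \emph{measure-preserving} system, and invariance is indispensable downstream, where Poincar\'e recurrence and the coboundary/Birkhoff argument are applied to $(\widehat{\Omega},\widehat{\mathcal{B}},\widehat{T},\widehat{\mathbb{Q}})$ in the proof of Lemma~\ref{stone}). Invariance is also tacitly used inside your own argument: calling $(A,\varphi_A,\xi)$ a factor presupposes it, and the step from $g\circ\widehat{T}=g$ $\widehat{\mathbb{Q}}$-a.e.\ to $g\circ\widehat{T}^n=g$ $\widehat{\mathbb{Q}}$-a.e.\ for all $n$ requires at least that $\widehat{T}$-preimages of $\widehat{\mathbb{Q}}$-null sets be null --- automatic under invariance, unproven otherwise (working with strictly invariant $g$ avoids the iteration issue but then yields ergodicity only in the strict sense, and passing to the standard a.e.\ notion again uses invariance). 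The invariance is not formal: the paper establishes it by decomposing $\widehat{\Omega}$ along the level sets $A_n=\{\tau_A=n\}$, using $\mathbb{E}(Y_{x|_n})=1$ and the independence of weights attached to disjoint sets of nodes to replace the density $Q_{n+k}(\omega,x)$ by $Q_k(\omega,\sigma^n x)$ after the shift $\eta_{x|_n}$, and then invoking the $\varphi_A$-invariance of $\xi$. Your spinal description in fact contains the ingredients for an alternative proof of invariance (the size-biased spine law is stationary under the random shift $\eta_{x|_{\tau_A}}$ combined with $\varphi_A$), but this must actually be carried out; as written, your proposal establishes only the ergodicity half of the lemma.
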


Define
\[
W(\omega,x,t)=\sum_{k=1}^{\tau_A(x,t)} \left (\log X_{x|_k}(\omega)-h_\nu(\sigma)\right ).
\]
We have

\begin{lemma}\label{stone} If $Q$ is not of $Q_{h_{\nu}(\sigma)}$-type, then
\[
\widehat{\mathbb{Q}}\left(-\infty =\liminf_{n\to \infty} \sum_{k=0}^{n-1} W\circ  \widehat{T}^{k}<\limsup_{n\to \infty} \sum_{k=0}^{n-1} W\circ  \widehat{T}^{k}=\infty\right)=1.
\]
\end{lemma}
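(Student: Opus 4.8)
The plan is to derive the statement from the ergodicity of $(\widehat\Omega,\widehat{\mathcal B},\widehat T,\widehat{\mathbb Q})$ (Lemma~\ref{hatT}) via a zero-one law, after identifying the Birkhoff sums of $W$ with a centred cascade walk sampled along the return times. First I would record the cocycle identity
\[
\sum_{k=0}^{n-1}W\circ\widehat T^{k}=\sum_{k=1}^{N_n}\bigl(\log X_{x|_k}-h_\nu(\sigma)\bigr),\qquad N_n=\sum_{j=0}^{n-1}\tau_A\circ\widehat T^{j},
\]
obtained by telescoping the shifts $\eta_{x|_{\tau_A}}$ built into $\widehat T$; thus $\sum_{k<n}W\circ\widehat T^{k}=R_{N_n}$, where $R_m=\sum_{k\le m}(\log X_{x|_k}-h_\nu(\sigma))$ and the cumulative return times $N_n\uparrow\infty$. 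Since $\bigl(\sum_{k<n}W\circ\widehat T^{k}\bigr)\circ\widehat T=\sum_{k<n+1}W\circ\widehat T^{k}-W$ with $W$ finite a.e., the events $\{\limsup_n R_{N_n}=+\infty\}$ and $\{\liminf_n R_{N_n}=-\infty\}$ are $\widehat T$-invariant, hence of $\widehat{\mathbb Q}$-probability $0$ or $1$, so it suffices to establish the oscillation on a set of positive $\widehat{\mathbb Q}$-measure.

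Next I would determine the law of the increments under $\widehat{\mathbb Q}$. Exactly as in the computations behind Theorem~\ref{cor1} and Corollary~\ref{cor2}, the size-biasing defining $\widehat{\mathbb Q}$ replaces $\log X_{x|_k}$ by its tilted version, of mean $h_{V,\nu}=h_\nu(\sigma)$ in the critical case, so that $R_m$ is a centred walk. The hypothesis enters precisely here: that $Q$ is \emph{not} of $Q_{h_\nu(\sigma)}$-type means this tilted centred increment is not almost surely $0$ (for a $Q_{h_\nu(\sigma)}$-type cascade every surviving weight equals $e^{h_\nu(\sigma)}$, the cascade contributes no fluctuation, and one must instead invoke \textbf{(}$\mathbf{\widetilde G}$\textbf{)}, which is the easy case already disposed of). Under the present hypothesis $W$ is therefore a genuinely non-degenerate mean-zero cocycle, and in particular is not an $\widehat T$-coboundary; this will exclude the possibility that $R_{N_n}$ stays bounded.

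The main difficulty is that $W\notin L^{1}(\widehat{\mathbb Q})$: the skew product $\varphi$ acts on the infinite-measure space $\Sigma\times\mathbb R$, so by Kac's formula $\tau_A$ has infinite $\mu_A$-mean, and neither Birkhoff's theorem nor Atkinson's recurrence theorem is available for $W$. This is where the filling scheme comes in. I would combine it with Dekking's recurrence result (Proposition~\ref{rec}), which requires no integrability, to prove that the cocycle $\sum_{k<n}W\circ\widehat T^{k}$ is \emph{recurrent}: it returns to every neighbourhood of $0$ infinitely often, $\widehat{\mathbb Q}$-a.e., which rules out transience of $R_{N_n}$ to $\pm\infty$. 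The heuristic is that, conditionally on the skew-product coordinate $(x,t)$, the sequence $(N_n)$ is deterministic while $R$ is a recurrent centred walk independent of it, and the balance of the positive and negative excursions of a recurrent mean-zero walk (a Sparre-Andersen/Spitzer phenomenon) prevents the sparsely sampled $R_{N_n}$ from drifting; the filling scheme is what makes this rigorous without any moment assumption on $X$. Reconciling the non-integrability with this sparse sampling is the real obstacle.

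Finally I would invoke the structure of recurrent real cocycles: a recurrent $\mathbb R$-valued cocycle over an ergodic system that is not a coboundary has a non-trivial closed group of essential values, and recurrence then produces, on a set of positive measure, both $\limsup_n R_{N_n}=+\infty$ and $\liminf_n R_{N_n}=-\infty$. The zero-one law of the first step upgrades this to full measure, giving
\[
\widehat{\mathbb Q}\left(-\infty=\liminf_{n\to\infty}\sum_{k=0}^{n-1}W\circ\widehat T^{k}<\limsup_{n\to\infty}\sum_{k=0}^{n-1}W\circ\widehat T^{k}=\infty\right)=1.
\]
This oscillation is then transferred, at the return times where $S_{N_n}F$ stays bounded, to $\limsup_n\log\bigl(Y_{x|_n}\nu([x|_n])\bigr)=+\infty$, i.e. $L=\infty$ $\widetilde{\mathbb Q}$-a.e., which yields $Q$-singularity through Theorem~\ref{main1}(i).
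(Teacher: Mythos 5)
Your reduction to a zero-one law via Lemma~\ref{hatT} and the cocycle identity $\sum_{k<n}W\circ\widehat T^{k}=R_{N_n}$ are correct and consistent with the paper, but the two load-bearing steps of your plan are gaps, and you have flagged one of them yourself without closing it. First, the recurrence of the cocycle: Proposition~\ref{rec} requires $\frac1n\sum_{k<n}W\circ\widehat T^{k}\to 0$ $\widehat{\mathbb Q}$-a.e., and this is precisely what cannot be verified here. One has $R_m/m\to 0$, but since $\tau_A\notin L^1(\mu_A)$ (Kac, as you note) one gets $N_n/n\to\infty$, so $R_{N_n}/n=(R_{N_n}/N_n)\cdot(N_n/n)$ is an indeterminate $0\cdot\infty$; Dekking's result therefore does not apply to $W$ over $(\widehat T,\widehat{\mathbb Q})$. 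The paper never proves recurrence of this cocycle at all --- it uses Proposition~\ref{rec} only for $F$ over $(\sigma,\nu)$, to make the induced map $\varphi_A$ well defined. Second, your assertion that non-degeneracy of the tilted increment makes $W$ ``in particular not an $\widehat T$-coboundary'' is not an implication but exactly the hard point: ruling out the (quasi-)coboundary alternative is the entire content of the paper's proof, and it cannot be done by abstract essential-value theory, which would moreover need recurrence as an input, returning you to the first gap.

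For comparison, the paper argues by contradiction without ever establishing recurrence: by ergodicity the event $\{G<\infty\}$, with $G=\lim_n\max_{1\le m\le n}\sum_{k<m}W\circ\widehat T^{k}$, has measure $0$ or $1$; assuming it is $1$, the filling scheme yields the transfer identity $W=-G^-+G^+-G^+\circ\widehat T$. Since $Q$ is not of $Q_{h_\nu(\sigma)}$-type, $\widehat{\mathbb Q}(W>0)>0$, so Poincar\'e recurrence gives a finite stopping time $\kappa$ with $W\circ\widehat T^{\kappa-1}>0$, whence $G^-\circ\widehat T^{\kappa-1}=0$ and $W\circ\widehat T^{\kappa-1}=G^+\circ\widehat T^{\kappa-1}-G^+\circ\widehat T^{\kappa}$. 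The contradiction is then extracted probabilistically, not ergodic-theoretically: disintegrating $\widehat{\mathbb Q}$ over $\xi$ and using that $\kappa$ is a stopping time, that the cascade weights in distinct generations are independent with mean $1$, and that $\xi$ is $\varphi_A$-invariant, one obtains $\Phi_u=\phi_u\cdot\Phi_u\circ\varphi_A$ for the conditional characteristic functions, so $\log|\phi_u|\le 0$ is a coboundary over $(A,\varphi_A,\xi)$, hence $\xi$-integral zero, hence $|\phi_u|\equiv 1$; thus $W\circ\widehat T^{\kappa-1}$ is conditionally constant, contradicting the non-$Q_{h_\nu(\sigma)}$-type hypothesis a second time. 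Your proposal gestures at the filling scheme but never runs this (or any) argument to exclude the bounded-above alternative, so as it stands the proof is incomplete at its core.
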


We can now end the proof of the theorem. Define 
\[
V(x,t)=\sum_{k=0}^{\tau_A(x,t)-1} F\circ \sigma^k(x)
\]
and for $n\ge 1$, define 
\[
N_n(x,t)=\tau_A(x,t)+\tau_A(\varphi_A(x,t))+\cdots+\tau_A(\varphi^{n-1}_A(x,t)).
\]
Then for $n\ge 1$,
\[
\log Y_{x|_{N_n(x,t)}}(\omega)+\log \nu[x|_{N_n(x,t)}]\ge \sum_{k=0}^{n-1} W\circ  \widehat{T}^{k}(\omega,x,t)+\sum_{k=0}^{n-1} V\circ \varphi_A^k(x,t)-C(x).
\]
Since, by the definition of $\tau_A$, for $\mu_A$-a.e. $(x,t)$, for all $n\ge 1$,
\[
\sum_{k=0}^{n-1} V\circ \varphi_A^k(x,t) \in [-t-1/2,-t+1/2].
\]
Together with Lemma \ref{stone} we deduce that for $\widetilde{\mathbb{Q}}$-a.s.,
\[
L(\omega,x)=\infty.
\]
By Theorem \ref{main1} we deduce that $\mathbb{E}_{\mathbb{P}}(Z)=0$. \qed

\subsection{Proof of Lemma \ref{hatT}}\label{pflem1}

First we show that $\widehat{\mathbb{Q}}$ is $\widehat{T}$-invariant. Consider a Borel set $B$ in $[-1/2,1/2]$, a cylinder $[u,v]$ in $\Sigma$ with $|v|=k\ge 1$ and a set
\[
B'\in\sigma\{X_{w}: w=v_1\cdots v_j \text{ for some } j=1,\ldots,k\}.
\]
Then $f=\chi_{B'\times [u,v]\times B}$ is an elementary function, and it is enough to show that
\[
\int_{\widehat{\Omega}} f(\omega,x,t) \,\widehat{\mathbb{Q}}\circ \widehat{T}^{-1}({\rm d}(\omega,x,t))=\int_{\widehat{\Omega}} f(\omega,x,t) \,\widehat{\mathbb{Q}}({\rm d}(\omega,x,t)).
\]
For $n\ge 1$, define $A_n=\{(x,t)\in A: \tau_A(x,t)=n\}$. Then define $A_\infty=A\setminus (\cup_{n\ge 1} A_n)$. We decompose $\widehat{\Omega}$ into the disjoint union
\[
\widehat{\Omega}=\bigcup_{n=1}^\infty \Omega\times A_n.
\]
Then, on $\widehat{\Omega}_n$, we have
\[
\widehat{T}(\omega,x,t)=(\eta_{x|_n}\omega,\sigma^nx,u+S_nF(x)),
\]
with the exception that $\widehat{T}(\omega,x,t)=(\omega,x,t)$ on $\widehat{\Omega}_\infty$. Since $\mu_A(A_\infty)=0$, we get
\begin{align*}
\int_{\widehat{\Omega}} f(\omega,x,t) \,\widehat{\mathbb{Q}}\circ \widehat{T}^{-1}({\rm d}(\omega,x,t)) = \sum_{n\ge 1} \int_{\widehat{\Omega}_n} f(\eta_{x|_n}\omega,\sigma^nx,u+S_nF(x)) \,\widehat{\mathbb{Q}}({\rm d}(\omega,x,t)).
\end{align*}
For each $n\ge 1$, the mapping $(\omega,x)\to \chi_{B'\times [u,v]}(\eta_{x|_n}\omega,\sigma^nx)$ is $\widetilde{\mathcal{F}}_{k+n}$-measurable. Therefore by \eqref{eq1},
\begin{align*}
I_n:=&\int_{\widehat{\Omega}_n} f(\eta_{x|_n}\omega,\sigma^nx,u+S_nF(x)) \,\widehat{\mathbb{Q}}({\rm d}(\omega,x,t))\\
=&\int_{A_n} \int_{\Omega} f(\eta_{x|_n}\omega,\sigma^nx,u+S_nF(x)) Q_{n+k}(\omega,x) \, \mathbb{P}({\rm d}\omega)\xi({\rm d}(x,t))\\
=&\int_{A_n} \int_{\Omega} Y_{x|_n}(\omega)f(\eta_{x|_n}\omega,\sigma^nx,u+S_nF(x)) Q_{k}(\eta_{x|_n}\omega,\sigma^nx) \, \mathbb{P}({\rm d}\omega)\xi({\rm d}(x,t))\\
=&\int_{A_n} \int_{\Omega} f(\omega,\sigma^nx,u+S_nF(x)) Q_{k}(\omega,\sigma^nx) \, \mathbb{P}({\rm d}\omega)\xi({\rm d}(x,t)),
\end{align*}
where we have used the facts that for $x\in \Sigma$, $Y_{x|_n}$ has mean $1$, the random variable
\[
\omega \to f(\eta_{x|_n}\omega,\sigma^nx,u+S_nF(x)) Q_{k}(\eta_{x|_n}\omega,\sigma^nx)
\]
is independent of $Y_{x|_n}$, and it has the same law as
\[
\omega \to f(\omega,\sigma^nx,u+S_nF(x)) Q_{k}(\omega,\sigma^nx)
\]
under $\mathbb{P}$. By Fubini's theorem,
\begin{align*}
I_n=&\int_{\Omega} \int_{A_n}  f(\omega,\sigma^nx,u+S_nF(x)) Q_{k}(\omega,\sigma^nx) \, \xi({\rm d}(x,t)) \mathbb{P}({\rm d}\omega)\\
=&\int_{\Omega} \int_{A_n}  f(\omega,x,t) Q_{k}(\omega,x) \, \xi\circ \varphi^{-n}({\rm d}(x,t)) \mathbb{P}({\rm d}\omega)\\
=&\int_{\Omega} \int_{A_n}  f(\omega,x,t) Q_{k}(\omega,x) \, \xi\circ \varphi_A^{-1}({\rm d}(x,t)) \mathbb{P}({\rm d}\omega).
\end{align*}
Summing over $n$ we get
\begin{align*}
\int_{\widehat{\Omega}} f(\omega,x,t) \,\widehat{\mathbb{Q}}\circ \widehat{T}^{-1}({\rm d}(\omega,x,t)) =& \int_{\Omega} \int_{A}  f(\omega,x,t) Q_{k}(\omega,x) \, \xi\circ \varphi_A^{-1}({\rm d}(x,t)) \mathbb{P}({\rm d}\omega)\\
=& \int_{\Omega} \int_{A}  f(\omega,x,t) Q_{k}(\omega,x) \, \xi({\rm d}(x,t)) \mathbb{P}({\rm d}\omega)\\
=& \int_{A} \int_{\Omega}  f(\omega,x,t) Q_{k}(\omega,x) \, \mathbb{P}({\rm d}\omega)\xi({\rm d}(x,t)) \\
=&\int_{\widehat{\Omega}} f(\omega,x,t) \,\widehat{\mathbb{Q}}({\rm d}(\omega,x,t)),
\end{align*}
where we have used the fact that $\xi$ is $\varphi_A$-invariant.

To check the ergodicity of $\widehat{\mathbb{Q}}$, let $\widehat{\mathcal{B}}' $ be the semi-algebra of $\widehat{\mathcal{B}} $ consisting of sets of the form
\[
\{(\omega,x,t): x\in[v,u],\ X_{u_1\cdots u_j}(\omega) \in I_{u_1\cdots u_j},\ j=1,\ldots,|u|, \ t\in J\}
\]
for $u,v\in\Lambda^*$, $\{I_{u_1\cdots u_j}\}_{1\le j \le |u|}$ Borel subsets of $[0,\infty)$ and $J\in \mathcal{B}([-1/2,1/2])$. It is clear that $\widehat{\mathcal{B}}'$ generates $\widehat{\mathcal{B}}$, so it is enough to show that for $B_1,B_2\in \widehat{\mathcal{B}}'$ with $\widehat{\mathbb{Q}} (B_1)>0$, $\widehat{\mathbb{Q}} (B_2)>0$, there exists $n_0\in\mathbb N $ such that $\widehat{\mathbb{Q}} (\widehat{T}^{-n_0}B_1\cap B_2)>0$.

The above claim follows from the fact that for $n$ large enough, the events in $\widehat{T}^{-n}B_1$ and $B_2$ on the $\Omega^*$ side are independent. More precisely, for $i=1,2$ write
\[
B_i=\{(\omega,x,t): x\in [v^i,u^i],\ X_{u_1^i\cdots u_j^i}(\omega) \in I_{u_1^i\cdots u_j^i},\ j=1,\ldots,|u^i|, \ t\in J_i\}.
\]
Then for all $n> |u^2|$, the events related to $\{X_u\}_{u\in\Lambda^*}$ in $\widehat{T}^{-n}B_1$ and $B_2$ are independent. This implies that
\[
\widehat{\mathbb{Q}} (\widehat{T}^{-n}B_1\cap B_2)=\Big(\prod_{i=1,2}\prod_{j=1}^{k_i} \mathbb{E}_{\mathbb{P}}(X\mathbf{1}_{\{X\in I_{u_1^i\cdots u_j^i}\}})\Big)\cdot \xi(\varphi_A^{-n}U_1\cap U_2),
\]
where $U_i$ is the projection of $B_i$ to $A$.
From  $\widehat{\mathbb{Q}} (B_1)>0$, $\widehat{\mathbb{Q}} (B_2)>0$ we know that
\[
\prod_{i=1,2}\prod_{j=1}^{|u^i|} \mathbb{E}_{\mathbb{P}}(X\mathbf{1}_{\{X\in I_{u_1^i\cdots u_j^i}\}})>0
\]
and $\xi (U_i)>0$ for $i=1,2$. Since $\xi$ is ergodic, there exists $n>|u^2|$  such that $\xi (\varphi_A^{-n}U_1\cap U_2)>0$, which gives the conclusion. \qed

\subsection{Proof of Lemma \ref{stone}}\label{pflem2}
We shall use the filling scheme, see \cite{Der} for example. For $n\ge 1$ define
\[
G_n=\max_{1\le m \le n} \sum_{k=0}^{m-1} W\circ  \widehat{T}^{k}.
\]
For a function $g$ denote by $g^+=\max(g,0)$ and $g^-=\max(-g,0)$. Then for $n\ge 1$ we have
\[
W=-G_{n+1}^-+G_{n+1}^+-G_n^+\circ\widehat{T}.
\]
Let $G=\lim_{n\to\infty} G_n$. Obviously $G\ge W>-\infty$. Now assume that $\widehat{\mathbb{Q}}(G<\infty)=1$ (By ergodicity this event has $\widehat{\mathbb{Q}}$-mass $0$ or $1$). Then this leads to
\[
W=-G^-+G^+-G^+\circ \widehat{T}.
\]
By construction, since $Q$ is not of $Q_{h_{\nu}(\sigma)}$-type, it holds that ${\widehat{\mathbb{Q}}}(\{W>0\})>0$. By Poincar\'e recurrent theorem, this implies that $\widehat{\mathbb{Q}}$-a.s.,
\[
\kappa=\inf\{n\ge 1: W\circ  \widehat{T}^{n-1}>0\}<\infty.
\]
Since $W\circ \widehat{T}^{\kappa-1}>0$, we have $G\circ \widehat{T}^{\kappa-1}\ge W\circ \widehat{T}^{\kappa-1}>0$, therefore $G^-\circ \widehat{T}^{\kappa-1}=0$, and
\[
W\circ \widehat{T}^{\kappa-1}=G^+\circ \widehat{T}^{\kappa-1}-G^+\circ \widehat{T}^{\kappa}.
\]
Denote by $\widehat{\mathbb{Q}}_{(x,t)}$ the disintegration of $\widehat{\mathbb{Q}}$ w.r.t. $\xi$, which is a probability measure on $\Omega$. For $u\in \mathbb{R}$ and $\xi$-a.e. $(x,t)$ denote by
\[
\phi_u(x,t)=\mathbb{E}_{\widehat{\mathbb{Q}}_{(x,t)}}(e^{iu W\circ \widehat{T}^{\kappa-1}(\cdot,x,t)})
\]
and
\[
\Phi_u(x,t)=\mathbb{E}_{\widehat{\mathbb{Q}}_{(x,t)}}(e^{iu G^+\circ \widehat{T}^{\kappa-1}(\cdot,x,t)}).
\]
Note that under $\widehat{\mathbb{Q}}_{(x,t)}$, $\kappa(\cdot,x,t)$ is a stopping time with respect to the filtration
\[
\{\mathcal{F}_n^{(x,t)}=\sigma(W\circ \widehat{T}^k(\cdot,x,t): 0\le k \le n-1)\}_{n\ge 1}.
\]
Since $G^+\circ \widehat{T}^{\kappa}(\cdot, x,t)$ depends only on the random variables $\{W\circ \widehat{T}^k(\cdot,x,t): k\ge \kappa\}$, it is independent of the stopped $\sigma$-field
\[
\mathcal{F}_\kappa^{(x,t)}=\left\{B\in \sigma\left(\cup_{n\ge 1}\mathcal{F}_n^{(x,t)}\right): B\cap \{\kappa(\cdot,x,t)\le n\}\in \mathcal{F}_n^{(x,t)} \text{ for all } n\ge 1\right \}.
\]
Therefore,
\begin{align*}
\Phi_u(x,t)=&\mathbb{E}_{\widehat{\mathbb{Q}}_{(x,t)}}(e^{iu W\circ \widehat{T}^{\kappa-1}(\cdot,x,t)}e^{iuG^+\circ \widehat{T}^{\kappa}(\cdot,x,t)}) \\
= & \mathbb{E}_{\widehat{\mathbb{Q}}_{(x,t)}}(e^{iu W\circ \widehat{T}^{\kappa-1}(\cdot,x,t)}e^{iuG^+\circ \widehat{T}^{\kappa-1}(\eta_{x|_{\tau_A(x,t)}}\cdot,\varphi_A(x,t))}) \\
= & \mathbb{E}_{\widehat{\mathbb{Q}}_{(x,t)}}(e^{iu W\circ \widehat{T}^{\kappa-1}(\cdot,x,t)} \mathbb{E}_{\widehat{\mathbb{Q}}_{(x,t)}}(e^{iuG^+\circ \widehat{T}^{\kappa-1}(\eta_{x|_{\tau_A(x,t)}}\cdot,\varphi_A(x,t))} | \mathcal{F}_\kappa^{(x,t)})\\
=& \phi_u(x,t)\Phi_u\circ\varphi_A(x,t),
\end{align*}
where the last equality comes from the fact that $\xi$ is $\varphi_A$-invariant and all random variables $X_u$ involved are independent in different generations, and  with expectation $1$. Taking the modulus,  then logarithm we obtain
\[
\log |\Phi_u(x,t)|-\log|\Phi_u\circ \varphi_A(x,t)|=\log |\phi_u(x,t)|.
\]
This implies that $\log  |\phi_u(x,t)|$ is a co-boundary of $(A,\mathcal{B}_A,\varphi_A,\xi)$. Since $ |\Phi_u(x,t)| \vee |\phi_u(x,t)|\le 1$, by Birkhoff ergodic theorem we have
\[
\int_A \log |\phi_u(x,t)| \,\xi(d(x,t))=0.
\]
Since $\log |\phi_u(x,t)|  \le 0$, we deduce that $\log |\phi_u(x,t)| =0$ for $\xi$-a.e. $(x,t)$, which means that $W\circ \widehat{T}^{\kappa-1}$ is a constant. This is a contradiction by considering the case when $W>0$ (and therefore $\kappa=1$) and when $Q$ is not of $Q_{h_{\nu}(\sigma)}$-type. Thus $\widehat{\mathbb{Q}}(G=\infty)=1$. Using similar argument one can also show that $\widehat{\mathbb{Q}}(\inf_{n\ge 1} \sum_{k=0}^{n-1} W\circ  \widehat{T}^{k} =-\infty)=1$. 

\section{Proofs of Theorems~\ref{Application2} and~\ref{Application1}}\label{PFAPPl}

\begin{proof}[Proof of Theorems~\ref{Application2}] For any continuous function $f$ from $\Sigma$ to $\mathbb R$ one has 
$$
\int_\Sigma f(x)\, \nu_p({\rm d}x)=\int_{\pi g \Phi(\Sigma)}\left ( \int_\Sigma f(x) (\nu_p)_{\pi g \Phi,\pi gy}({\rm d}x)\right )\, \nu_p \circ (\pi g \Phi)^{-1}({\rm d}y).
$$
Thus, for all $n\ge 1$, we have 
$$
\int_\Sigma f(x)\, Q_n(\omega,x)\, \nu_p({\rm d}x)=\int_{\pi g \Phi(\Sigma)}\left ( \int_\Sigma f(x) \, Q_n(\omega,x)\, (\nu_p)_{\pi g \Phi,\pi gy}({\rm d}x)\right )\, \nu_p \circ (\pi g \Phi)^{-1}({\rm d}y).
$$
Since we have $h_X<\dim_H((\nu_p)_{\pi g \Phi,\pi g\Phi(x)})=h_{p,\pi}(\mathcal{P},\Phi)$, the same arguments as those used in the proof of Corollary~\ref{cor-2.11} (see Section~\ref{pfcor2}) imply that  
$$
\int_\Sigma f(x)\, Q\cdot\nu_p({\rm d}x)=\int_{\pi g \Phi(\Sigma)}\left ( \int_\Sigma f(x) \, Q\cdot (\nu_p)_{\pi g \Phi,\pi gy}({\rm d}x)\right )\, \nu_p \circ (\pi g \Phi)^{-1}({\rm d}y)
$$
This implies the claim about absolute continuity.

For the exact dimensionality, again since $h_X<\dim_H((\nu_p)_{\pi g \Phi,\pi g\Phi(x)})$, by Corollary \ref{co-ed} we have that for $\nu_p$-a.e. $x\in\Sigma$, (or equivalently, for $\nu_p\circ (\pi g \Phi)^{-1}$-a.e. $\pi g\Phi(x)$), $Q\cdot (\nu_p)_{\pi g \Phi,\pi g\Phi(x)}$ is exact-dimensional with dimension $h_{p,\pi}(\mathcal{P},\Phi)-h_X$. If $\mathcal{I}$ satisfies OSC, one can easily deduce that $Q\cdot (\nu_p)_{\pi g \Phi,\pi g\Phi(x)}\circ \Phi^{-1}$ is also exact-dimensional with dimension
\[
\frac{h_{p,\pi}(\mathcal{P},\Phi)-h_X}{\chi_{p,r}}
\]
(to do so, use the fact that $\mathcal I$ satisfies the strong open set condition~\cite{Sch94}, and then the fact that the contraction ratios are not random to apply  \cite[Lemma 8.8]{BiHaJo2011}; this makes it possible to sandwich, at $Q\cdot\nu_p$-almost every point   $x$, the ball $B(\Phi(x),r)$ by images of cylinders centered at $x$, and with diameters comparable to $r$). Note that the Lyapunov exponent remains the same since
\[
\mathbb{E}(\sum_{i\in\Lambda} -p_iX_i\log r_i)=\sum_{i\in\Lambda} -p_i\log r_i.
\]
Now, together with the fact that $(Q\cdot \nu_p)\circ (\pi g\Phi)^{-1}$ is absolutely continuous with respect to $\nu_p\circ (\pi g \Phi)^{-1}$, we can get that $\nu_p\circ (\pi g \Phi)^{-1}$ is exact-dimensional with dimension
\[
\frac{h_p-h_{p,\pi}(\mathcal{P},\Phi)}{\chi_{p,r}}.
\]
Indeed, we deduce from a theorem by Marstrand (see \cite[Theorem 5.8]{Falconer1985}) that the lower local dimension of $(Q\cdot \nu_p)\circ \Phi^{-1}$ is at least $\dim ((Q\cdot \nu_p)\circ (\pi g\Phi)^{-1})+\dim (Q\cdot (\nu_p)_{\pi g \Phi,\pi g\Phi(x)}\circ \Phi^{-1})$, i.e. at least
\[
D=\frac{h_p-h_{p,\pi}(\mathcal{P},\Phi)}{\chi_{p,r}}+\frac{h_{p,\pi}(\mathcal{P},\Phi))-h_X}{\chi_{p,r}}=\frac{h_{p}-h_X}{\chi_{p,r}},
\]
at $(Q\cdot \nu_p)\circ \Phi^{-1}$-almost every $y\in\mathbb{R}^d$. Moreover, the proof of \cite[Theorem 3.1(i)]{FJ14} establishes, without additional moment assumption on $X$ that the upper local dimension of $(Q\cdot \nu_p)\circ \Phi^{-1}$ is at most $D$ at $(Q\cdot \nu_p)\circ \Phi^{-1}$-almost every $y\in\mathbb{R}^d$. To see this, note that in this proof we need a weaker property than \cite[Lemma 3.4(iii)]{FJ14}, namely  $\limsup_{n\to\infty} \frac{1}{n}\log \|\mu^{[x|_n]}\| \le 0$ for $Q\cdot\nu_p$-almost every~$x$, where $\mu^{[x|_n]}$ is the copy of $Q\cdot\nu_p$ associated with the subtree rooted at~$x|_n$. But this property follows easily from the integrability of $Z=\|Q\cdot\nu_p\|$. The control of the upper local dimension of $(Q\cdot \nu_p)\circ \Phi^{-1}$ can also be obtained by using an approach similar to that used in the proof of Theorem \ref{thm1} (c), by using the natural covering of $K$ generated by the iterations of the IFS $\mathcal{I}$. Eventually, $(Q\cdot \nu_p)\circ \Phi^{-1}$ is exact dimensional with dimension~$D$. 
\end{proof}

\begin{proof}[Proof of Theorems~\ref{Application1}]
Since $h_X<\dim \nu^y$ for ${\pi_2}_*\nu $-almost every $y$, it follows from Theorem~\ref{thm1} that for ${\pi_2}_*\nu $-almost every $y$, the action of $Q$ on $\nu^y$ is full, and consequently, using the disintegration of~$\nu$ as ${\pi_2}_*\nu({\rm d}y) \nu^y({\rm d}x)$, and the same lines as in the proof of Corollary~\ref{cor-2.11}, we see that $Q$ acts fully on $\nu$, and ${\pi_2}_*(Q\cdot\nu)$ is absolutely continuous with respect to its expectation ${\pi_2}_*\nu$, with density the total mass of $ Q\cdot \nu^y$ for ${\pi_2}_*\nu $-almost every $y$. Moreover, conditional on $Q\cdot \nu^y\neq 0$, $Q\cdot \nu^y$ is exact dimensional with dimension  $h_\nu(\sigma)-h_{{\pi_2}_*\nu}(\sigma_2)-h_X$ with respect to $d$.  
 
\medskip

The result about the exact dimensionality of $Q\cdot \nu$ is now a direct consequence of the  following lemma. 
\begin{lemma}\label{lll}

Let $m$ be a Borel probability measure on $(\Lambda^{\mathbb Z_+},d)$. Suppose that $m$ is exact dimensional with dimension $\delta$ with respect to the metric $d$. Denote by $\delta_2$ the lower Hausdorff dimension of ${\pi_2}_*m$ with respect to the metric induced by $d$, and let $\underline \delta_1$ and $\overline \delta_1$ be the essential infimum and the essential supremum of the lower Hausdorff dimensions of the conditional measures $m^y$ with respect to $d$ again, where $m^y$ is obtained from the disintegration of $m$ with respect to ${\pi_2}_*m$. Then, with respect to the metric $d_{(b_1,b_2)}$, for $m$-almost every point $z$, we have 
\begin{equation}\label{dimbounds}
\frac{\overline \delta_1}{\log(b_1)}+\frac{\delta_2}{\log(b_2)}\le \underline{\dim}_{{\rm loc}}(m,z)\le \overline{\dim}_{{\rm loc}}(m,z)\le \frac{\delta}{\log(b_2)}-\Big (\frac{1}{\log(b_2)}-\frac{1}{\log(b_1)}\Big )\underline \delta_1.
\end{equation}
So, if $\underline \delta_1=\overline \delta_1$ and  $\delta=\underline \delta_1+\delta_2$, then measure $m$ is exact dimensional. 
\end{lemma}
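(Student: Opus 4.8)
The plan is to compute the local dimensions of $m$ for the anisotropic metric $d_{(b_1,b_2)}$ through the masses of approximate squares, feeding in the three available pieces of information: the exact dimensionality of $m$ for $d$, the lower Hausdorff dimension $\delta_2$ of ${\pi_2}_*m$, and the fibrewise bounds $\underline\delta_1\le\underline{\dim}_H(m^y)\le\overline\delta_1$. First I would set $\gamma=\log b_2/\log b_1\in(0,1)$ (recall $b_1>b_2$) and, for $z=(x,y)$ and $r=b_2^{-q}$, observe from the definition of $d_{(b_1,b_2)}$ that the ball $B_{d_{(b_1,b_2)}}(z,r)$ is, up to a bounded distortion of the radius, the approximate square
\[
S_{p,q}(z)=\{(x',y'):\ x'|_p=x|_p,\ y'|_q=y|_q\},\qquad p=\lceil\gamma q\rceil ,
\]
for which $-\log r=q\log b_2\sim p\log b_1$. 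By monotonicity of $r\mapsto m(B(z,r))$ it suffices to let $q\to\infty$, so that
\[
\underline{\dim}_{{\rm loc}}(m,z)=\liminf_{q\to\infty}\frac{-\log m(S_{p,q}(z))}{q\log b_2},\qquad \overline{\dim}_{{\rm loc}}(m,z)=\limsup_{q\to\infty}\frac{-\log m(S_{p,q}(z))}{q\log b_2},
\]
and everything reduces to two-sided estimates of $m(S_{p,q}(z))$ valid for $m$-a.e.\ $z$.

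On a set of full $m$-measure I would record simultaneously: $\frac1n\log m([z|_n])\to-\delta$ (exact dimensionality of $m$ for $d$); $\liminf_n\frac1n(-\log {\pi_2}_*m([y|_n]))\ge\delta_2$ (since $\delta_2$ is the essential infimum of the lower local dimensions of ${\pi_2}_*m$); and the fibre estimates on $m^y([x|_n])$ coming from $\underline\delta_1\le\underline{\dim}_H(m^y)\le\overline\delta_1$. Already the inclusions $[z|_q]\subseteq S_{p,q}(z)\subseteq[z|_p]$ give the crude sandwich $\frac{\delta}{\log b_1}\le\underline{\dim}_{{\rm loc}}(m,z)\le\overline{\dim}_{{\rm loc}}(m,z)\le\frac{\delta}{\log b_2}$; the task is to sharpen both ends with the fibre data. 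The vehicle is the disintegration
\[
m(S_{p,q}(z))=\int_{[y|_q]}m^{y'}([x|_p])\,{\pi_2}_*m(\mathrm dy').
\]
For the lower bound on $\underline{\dim}_{{\rm loc}}$ I would estimate the integrand from above by the fibre dimension bound and pull out ${\pi_2}_*m([y|_q])$, targeting $m(S_{p,q}(z))\le e^{-(\overline\delta_1 p+\delta_2 q)(1+o(1))}$, which yields $\underline{\dim}_{{\rm loc}}(m,z)\ge\frac{\overline\delta_1}{\log b_1}+\frac{\delta_2}{\log b_2}$. For the upper bound on $\overline{\dim}_{{\rm loc}}$ I would bound $m(S_{p,q}(z))$ from below, starting from $m([z|_q])\approx e^{-\delta q}$ and incorporating the gain $e^{\underline\delta_1(q-p)}$ produced by the $x$-extensions of $x|_p$ inside the fibre over $y|_q$; rewriting $\delta q-\underline\delta_1(q-p)=(\delta-\underline\delta_1)q+\underline\delta_1 p$ gives exactly $\overline{\dim}_{{\rm loc}}(m,z)\le\frac{\delta}{\log b_2}-\big(\tfrac1{\log b_2}-\tfrac1{\log b_1}\big)\underline\delta_1$.

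The hard part will be these two estimates of $m(S_{p,q}(z))$. An approximate square is a genuine rectangle, not a product, so its mass is not simply ${\pi_2}_*m([y|_q])$ times a single fibre cylinder mass: the conditional measures $m^{y'}$ vary with $y'$ across $[y|_q]$, and the frozen base coordinate $x$ need not be $m^{y'}$-typical for the neighbouring fibres $y'\neq y$. Turning the pointwise, $m$-a.e.\ fibre asymptotics into the uniform control required inside the integral --- e.g.\ by an Egorov/maximal-function reduction to a full-measure set on which all the convergences above hold uniformly, together with a Borel--Cantelli argument to absorb the sub-exponential errors --- is the technical core of the proof. Once both estimates are in place, the final assertion is immediate: if $\underline\delta_1=\overline\delta_1=:\delta_1$ and $\delta=\delta_1+\delta_2$, the lower and upper bounds collapse to the common value $\frac{\delta_1}{\log b_1}+\frac{\delta_2}{\log b_2}$, so $\underline{\dim}_{{\rm loc}}(m,z)=\overline{\dim}_{{\rm loc}}(m,z)$ equals it for $m$-a.e.\ $z$, i.e.\ $m$ is exact dimensional for $d_{(b_1,b_2)}$ with this dimension.
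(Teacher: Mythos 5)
Your reduction to approximate squares $S_{p,q}(z)$ with $p=\lceil \gamma q\rceil$ is correct, and your final collapse step is fine, but the plan has a genuine flaw in the first inequality. You propose to get $m(S_{p,q}(z))\le e^{-(\overline\delta_1 p+\delta_2 q)(1+o(1))}$ by ``estimating the integrand from above by the fibre dimension bound''. The only almost-everywhere information available, however, is of essential-infimum type: since $\overline\delta_1$ is an essential \emph{supremum}, there is a positive-${\pi_2}_*m$-measure set of fibres whose lower dimension is close to $\underline\delta_1$, and at typical points of such fibres $m^{y'}([x'|_p])$ is genuinely of order $e^{-\underline\delta_1 p(1+o(1))}$. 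No Egorov/Borel--Cantelli uniformisation can upgrade an ess-inf into an ess-sup: for $z$ lying over a density point $y$ of the low-dimension set of base points (density in the martingale sense along cylinders), the bulk of $[y|_q]$ consists of such fibres, and your integral is of order $e^{-\underline\delta_1 p}\,{\pi_2}_*m([y|_q])$; your scheme can therefore only yield the weaker bound with $\underline\delta_1$ in place of $\overline\delta_1$. Worse, your lower-bound argument nowhere uses the exact dimensionality of $m$, and without that hypothesis the claimed inequality is false. Take $b_1=4$, $b_2=2$, ${\pi_2}_*m$ uniform on $\{0,1\}^{\mathbb Z_+}$ (so $\delta_2=\log 2$), $m^y$ uniform on $\{0,1\}^{\mathbb Z_+}\subset\{0,1,2,3\}^{\mathbb Z_+}$ for $y\in[0]$ and uniform on $\{0,1,2,3\}^{\mathbb Z_+}$ for $y\in[1]$: then $\underline\delta_1=\log 2$, $\overline\delta_1=\log 4$, and for $m$-a.e.\ $z$ over $[0]$ the local dimension in $d_{(b_1,b_2)}$ equals $\frac{\log 2}{\log 4}+1=\frac32$, strictly below $\frac{\overline\delta_1}{\log b_1}+\frac{\delta_2}{\log b_2}=2$ (here $m$ is not exact dimensional, its $d$-local dimension being $2\log 2$ on one part and $3\log 2$ on the other). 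Exact dimensionality is precisely what couples a small fibre dimension $t$ with a large base local dimension, so that $\frac{t}{\log b_1}+\frac{\delta-t}{\log b_2}$ is minimised at $t=\overline\delta_1$; the paper exploits this at the level of \emph{sets}, deducing the first inequality from Marstrand's slicing theorem (\cite[Theorem 5.8]{Falconer1985}) applied to Borel sets of positive $m$-measure, not from mass estimates on squares. Any repair of your route must inject the constraint $\frac1n\log m([z|_n])\to-\delta$ into the lower bound; as written it proves a false statement.

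For the second inequality your target $m(S_{p,q}(z))\ge m([z|_q])\,e^{\underline\delta_1(q-p)(1-o(1))}$ is the right one, and the arithmetic converting it into $\overline{\dim}_{\rm loc}(m,z)\le \frac{\delta}{\log b_2}-\big(\frac{1}{\log b_2}-\frac{1}{\log b_1}\big)\underline\delta_1$ is correct; this is in substance the argument in the proof of Theorem 2.11 of \cite{FengHu2009}, which is exactly what the paper cites for this inequality. But the estimate itself is asserted, not proved, and the tools you name will not deliver it: the conditional measures $m^{y'}$ are defined only for ${\pi_2}_*m$-a.e.\ $y'$, each carried by a fibre of base measure zero, while $m(S_{p,q}(z))$ integrates them over the shrinking ball $[y|_q]$; comparing this integral with $m([z|_q])$ requires the Besicovitch-type density/maximal-function machinery of the Feng--Hu proof, and nothing in your sketch constructs that comparison. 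So as it stands the proposal is an unexecuted (but correctly aimed) plan for the upper bound and a structurally incorrect plan for the lower bound.
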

The first inequality in \eqref{dimbounds} follows from a result of Marstrand (see \cite[Theorem 5.8]{Falconer1985}), while the second one  can be deduced from the proof of Theorem 2.11 in \cite{FengHu2009}. 
\end{proof}

\noindent {\bf Acknowledgement. }We thank A.-H. Fan for his comments on a first version of this work, especially for pointing to us the characterisation of the action of $Q_\alpha$ on Borel probability measures. We would also like to thank two anonymous referees for their useful comments to help improving the paper.

\end{document}